\newtheorem{theorem}{Theorem}
\newtheorem{proposition}[theorem]{Proposition}
\newtheorem{lemma}[theorem]{Lemma}
\newtheorem{corollary}[theorem]{Corollary}
\theoremstyle{definition}
\newtheorem{remark}[theorem]{Remark}
\newtheorem{definition}[theorem]{Definition}
\newcommand{\cref}[1]{Corollary~\ref{c.#1}}
\numberwithin{equation}{section}
\numberwithin{theorem}{section}
\newcommand{\R}{\mathbb{R}}
\renewcommand{\subset}{\subseteq}
\DeclareMathOperator*{\osc}{osc}
\DeclareMathOperator{\supp}{supp}
\renewcommand{\tilde}{\widetilde}
\newcommand{\intav}[1]{\mathchoice {\mathop{\vrule width 6pt height 3 pt depth  -2.5pt
\kern -8pt \intop}\nolimits_{\kern -6pt#1}} {\mathop{\vrule width
5pt height 3  pt depth -2.6pt \kern -6pt \intop}\nolimits_{#1}}
{\mathop{\vrule width 5pt height 3 pt depth -2.6pt \kern -6pt
\intop}\nolimits_{#1}} {\mathop{\vrule width 5pt height 3 pt depth
-2.6pt \kern -6pt \intop}\nolimits_{#1}}}
\begin{document}

\title[Fully nonlinear elliptic transmission problems ]{High-Order regularity for Fully nonlinear elliptic transmission problems under weak convexity assumption }

\begin{abstract}
This paper studies, in the spirit of \cite{MPA},  Schauder theory to transmission problems modelled by fully nonlinear uniformly elliptic  equations of second order. We focus on operators $F$ that fails to be concave or convex in the space of symmetric matrices.  In a first scenario, it is considered that  $F$ enjoys a small ellipticity aperture.  In our second case, we study regularity results  where the convexity of the superlevel (or sublevel) sets is verified, implying that the operator F is quasiconcave (or quasiconvex).
\end{abstract}

\author[G.C.Ricarte]{G. C. Ricarte}
\address[G.C.Ricarte]{Universidade Federal Cear\'{a}. Department of Mathematics. Fortaleza, CE-Brazil 60455-760. }
\email{ricarte@mat.ufc.br}

\author[C.S. Barroso]{C. S. Barroso}
\address[C.S.Barroso]{Universidade Federal Cear\'{a}. Department of Mathematics. Fortaleza, CE-Brazil 60455-760. }
\email{cleonbar@mat.ufc.br}

\author[L. S. Tavares ]{L. S. Tavares}
\address[L. S. Tavares ]{Universidade Federal do Cariri. Centro de Ciências e Tecnologia. Juazeiro do Norte, CE-Brazil 63048-080.}
\email{leandro.tavares@ufca.edu.br}

\keywords{Transmission problems, Fully nonlinear elliptic equations, Viscosity solution, regularity estimates.}
\subjclass[2010]{35B65, 35Q74, 35J60, 4A50.}
\date{\today}

\maketitle

\section{Introduction}

In the past few years there has been increasing interest in so called transmission problems. These problems appear, for example, in thermal theory and electromagnetic conductivity, composite materials.  The transmission problems were initially studied in the late 30’s  by Picone \cite{Picone} addressing the concept within the realm of materials science.  In general, problems of this type can be described as follows. There are given two domains $\Omega^{+}, \Omega^{-} \subset \mathbb{R}^n$, space which have a portion $\Gamma$ of their boundaries in common. A boundary value problem is then posed for each domain with the boundary conditions on $\Gamma$  and involving the solutions of both problems. As a sample, let $\Omega^{+}$ and $\Omega^{-}$ domains with $\partial \Omega^{+}$ and $\partial \Omega^{-}$, respectively. The problems consists in to consider qualitative and quantitative aspects of a function that satisfy   $u$ that satisfy  $u^{+}=u \big |_{\Omega^+},$ $u^{-}= u \big |_{\Omega^{-}}$, respectively, for which $u^{+}_{\nu} - u^{-}_{\nu}=g$ on $\Gamma$. More precisely, 
\begin{equation}\label{transp}
\left\{
\begin{array}{rclcl}
 \Delta u^{+} &=& f^{+}(x)& \mbox{in} &   \Omega^{+}  \\
 \Delta u^{-} &=& f^{-} (x) &\mbox{in}& \Omega^{-} \\
 u^+_{\nu} - u^{-}_{\nu}&=& g (x) &\mbox{on}& \Gamma,
\end{array}
\right.
\end{equation}
The main contribution in \cite{Picone} lies in the uniqueness of solutions for a wide class of  transmission problems related to mathematical models in elasticity theory. It is worth noting that the first  rigorous contribution regarding existence results traces back to Lions \cite{lions}. Furthermore, both contributions of Picone and Lions have inspired subsequent developments, including the ones by Stampacchia \cite{stampacchia}, Campanato \cite{camp1, camp2} and Schechter \cite{schechter}. Additionally, we reference the work of Borsuk \cite{borsuk}, where an extensive list  of significant contributions related to fixed transmission problems is provided alongside and a comprehensive exposition of their mathematical theory and aspects.

In this paper we consider the following general transmission problem: Given two functions $f^{+}$, $f^{-}$, to find functions $u^{\pm} \in C^{2,\alpha}$ such that 
\begin{equation}\label{E1}
\left\{
\begin{array}{rclcl}
 F(D^2u) &=& f^{+}(x)& \mbox{in} &   \Omega^{+}=B_1 \cap \{x_n > \psi(x')\}  \\
 F(D^2 u) &=& f^{-}(x) &\mbox{in}& \Omega^{-} = B_1 \cap \{x_n < \psi(x')\}\\
 u^+_{\nu} - u^{-}_{\nu}&=& g(x) &\mbox{on}& \Gamma.
\end{array}
\right.
\end{equation}
  Here $u \in C(B_1)$ and $u^{\pm} = u \big|_{\overline{\Omega^{\pm}}}$. The interface $\Gamma$ is given by the graph of a function $\psi : \mathbb{R}^{n-1} \rightarrow \mathbb{R}$ with unit normal vector $\nu$ pointing towards $\Omega^+$, and $u^{\pm}_{\nu}$ denote the $\nu$-directional derivatives of $u^{\pm}$.  Let $\textrm{Sym}(n)$ be the set of symmetric matrices of size $n \times n$. We assume that $F : \textrm{Sym}(n)  \rightarrow \mathbb{R}$ satisfies
\begin{equation}\tag{{\bf \text{\textcolor{blue}{Unif. Ellip.}}}}\label{Unif.Ellip.}
  \hskip -1.5cm \lambda\|N\|\leq F(M+N)-F(M) \leq \Lambda \|N\|
\end{equation}
 for every $M,N \in \textit{Sym}(n)$ with $N \ge 0$. For simplicity, we assume $F(0)=0$. As usual, $\mathcal{E}$$(\lambda, \Lambda)$ denotes the class of all such operators.  For $0 < \lambda \le \Lambda$, set
$$\mathcal{E}(\lambda,\Lambda) = \{F : \textrm{Sym}(n)  \rightarrow \mathbb{R} : \,\, F \,\, \textrm{satisfies} \,\, \eqref{Unif.Ellip.} \}$$

Transmission problems in the context of viscosity solutions have been studied recently by Soria-Carro and Stinga \cite{MPA}. In their fine study, which includes problems like \eqref{E1}, they achieved significant results, including the existence of solutions through a Alexandrov-Bakelman-Pucci type maximum principle (see \cite[Section 2]{MPA} for a more general statement), uniqueness of the solution, a comparison principle, and regularity of solutions of flat interface transmission problems. As for regularity results, they obtained $C^{0,\alpha}$, $C^{1,\alpha}$, and $C^{2,\alpha}$ regularity of viscosity solutions up to the transmission surface for the case of curved interfaces. An essential aspect to highlight is that they proved global $C^{2,\alpha}$ estimates for some $\alpha \in (0,1)$ under the convexity hypothesis on  $F$, which do not necessarily cover the whole of $(0,1)$. 

Inspired by \cite{JV1,MPA} and under a different approach, the goal of this paper is twofold. The first one is to establish $C^{2,\alpha}$ estimates with $\alpha$ running throughout $(0,1)$ for viscosity solutions of \eqref{E1}, complementing and improving \cite[Theorem 1.3]{MPA}. More precisely, we explore the so-called \textit{geometric tangential equation} which is usually used to study approximation operators under Cordes-Nirenberg’s conditions in several contexts of
regularity. Here we will assume  that the ellipticity constants $(\lambda,\Lambda)$ do not deviate significantly, that is, when the quantity $\mathfrak{e} \colon= 1-\frac{\lambda}{\Lambda}$ is sufficiently small (\textcolor{black}{Theorem} \ref{BMO}).  

Of particular interest, we point out that our approach covers the class of \textit{Isaac's type equations}
\begin{equation} \label{Isaac1}
\left\{
\begin{array}{rclcl}
\displaystyle \sup_{\beta \in \mathcal{B}} \inf_{\gamma \in \mathcal{A}} \left(\mathcal{L}_{\gamma \beta} u - h_{\gamma \beta}(x) \right)&=& f^{\pm}(x)& \mbox{in} &   \Omega^{\pm}  \\
 u^+_{\nu} - u^{-}_{\nu}&=& g (x) &\mbox{on}& \Gamma,
\end{array}
\right.
\end{equation}
where $h_{\gamma \beta} : B_1 \rightarrow \mathbb{R}$ and $f^{\pm} : \Omega^{\pm} \rightarrow \mathbb{R}$ are H\"{o}lder continuous and $\mathcal{L}_{\gamma \beta} u \colon= a^{ij}_{\gamma \beta}(x) \partial_{ij} u $ is a family of elliptic operators. This class of operators appears naturally in stochastic control and in differential games theory, for example, two-player and zero-sum differential games (see  \cite{Iss} and \cite{AFD}). Note that this is an example of a non-convex/non-concave equation; hence, Theorem 1.3 in \cite{MPA}  does not cover \eqref{Isaac1} (see e.g.  Corollary \ref{Isaac}) for $C^{2,\alpha}$ estimates. However, the best result obtained in \cite{MPA} provides $C^{1,\alpha}$ estimates. 

In our second goal, we consider scenarios where the convexity of the superlevel (or sublevel) sets is verified, implying that the operator $F$ is quasiconcave (or quasiconvex). Alternatively, we also impose certain asymptotic concavity properties, such as e.g. the condition that $F = F(M)$ behaves as a concave (or convex, or ``close to linear") function when $M$ becomes large. In light of these considerations, we establish Theorem \ref{BMOQ} that furnishes $C^{2,\alpha}$ estimates for  $\alpha \in (0,\alpha_0)$, where $\alpha_0 \in (0,1)$is as in \eqref{Hom1}.

A family of operators for which our second result (Theorem \ref{BMOQ}) applies concerns the \textit{special Lagrangian equation} which
has the form
\begin{equation} \label{Lag}
F(D^2 u) = \sum_{j=1}^{n} \textrm{arctg} (\lambda_j) =\Theta,
\end{equation}
where $\lambda_j$ are the eigenvalues of Hessian matrix $D^2u$.  Equation \eqref{Lag} originates in the special Lagrangian geometry by Harvey-Lawson. The Lagrangian graph $(x, Du(x)) \subset \mathbb{R}^n \times \mathbb{R}^n$ is called spacial when the argument of the complex number $(1+ i \lambda_1) \cdot \ldots \cdot (1+i \lambda_n)$ or the phase is constant $\Theta$, and it is special if and only if $(x,Du(x))$ is a minimal surface in $\mathbb{R}^n \times \mathbb{R}^n$.  Therefore, using our Schauder type results Theorem \ref{BMOQ}, we deduce that solutions of special Lagrangian equations under the Yuan's assumption  $|f^{\pm}(x)| \ge \frac{\pi}{2}(n-2)$ are $C^{2,\alpha}$ for any dimension.

It is worth mentioning that in our results we only deal with constant coefficients. However, similar estimates can be derived for equations with H\"{o}lder continuous coefficients (see Section \ref{SecCasoGeral} for more details).

\subsection{Historic overview and further motivations}
In the $50$s  H. Cordes and L. Nirenberg, independently, established several results on linear elliptic problems in non-divergence form. Roughly speaking, based on perturbation arguments, they showed for a bounded solution of the problem
 \begin{equation} \label{CH}
 	a_{ij}(x) \partial_{ij} u =f(X) \quad \textrm{in} \quad B_1
 \end{equation}
with $(a_{ij}(x))$ a symmetric matrix and uniformly elliptic, and a given $\delta >0$, that there exist $0 < \alpha(n,\lambda,\Lambda, \delta) < 1$ such that if 
 $
 	|a_{ij}(x) - a_{ij}(x_0)| < \delta
$
then the solutions of \eqref{CH} are $C^{1,\alpha}_{loc}(B_1)$. 
 
Let's recall that in the context of classical solutions for  
   \begin{equation} \label{T1}
  F(D^2 u) = 0 \quad \textrm{in} \quad B_1
  \end{equation}
where $F$ is only assumed to be either uniformly elliptic or parabolic, viscosity solutions may fail to be smooth and
the best known regularity under these sole assumptions is $C^{1,\alpha}$ regularity. For example, Nadirashvilli and Vladut \cite{Vla} showed the existence of nonclassical viscosity solutions to fully nonlinear elliptic equations in dimension $12$. In fact, such solutions are not even $C^{1,1}$ \cite{Vla1} and \cite{Vla2}. Thus, a relevant problem in the theory is to determine some structural conditions on $F$ other than the uniform ellipticity (between concavity and convexity, and without further hypotheses) that guarantee higher order $C^{2,\alpha}$-regularity. With this issue in mind, another reasonable question is: 
 \begin{center}
 {\sl Which assumptions on $F$ guarantee that solutions of  $F(D^2u)=f(x)$ in $B_1$, for $f$ in a suitable function space, are classical?  }
 \end{center}
The first relevant regularity result for \eqref{T1} is due to L. Nirenberg \cite{Nir} who derived \textit{a priori} $C^{2,\alpha}$ estimates in dimension $2$. Through the journey of finding $C^2$-solutions for fully nonlinear equations, under concavity or convexity assumption, the $C^{2,\alpha}$-regularity result ($\alpha\in (0,1)$) of Evans \cite{Ev} and Krylov \cite{Kr} is groundbreaking. The second major chapter in the  theory regards Armstrong's, Silvestre and Smart's estimates in \cite{ASS}, where they proved under smoothness on $F$ that solutions of \eqref{T1} are partially regular, that is, there exist a closed set $\Sigma \subset B_1$ and a universal constant $\varepsilon>0$ such that $u \in C^{2, \alpha}(B_1\backslash \Sigma)$ with $\mathcal{H}^{n-\varepsilon}(\Sigma)=0$.  
  Important results in this direction can be found in \cite{WN}, where Cordes-Landis ellipticity type conditions are assumed. More precisely, fixed any $\alpha \in (0,1)$, there exist a $\delta>0$ such that, if $\frac{\lambda}{\Lambda}-1 < \delta$ and $f \in C^{\alpha}(0)$ then $u \in C^{2,\alpha}(0)$. We also mention the work of Goffi \cite{Al} where uniform ellipticity is combined with convexity of the superlevel (sublevel) sets to reach $C^{2,\alpha}$ regularity.

In the context of regularity theory for transmission problems, the recent literature concerning the problem \eqref{E1} also includes contribution of Cafarelli, Soria-Carro, and Stinga \cite{cscs}. Their research focuses on attaining $C^{1,\alpha}$ regularity up to the transmission surface for distributional solutions of  the Dirichlet problem associated with \eqref{transp}, under the assumption that the transmission surface possesses $C^{1,\alpha}$ regularity. Their notable achievements were made possible via the development of a novel geometric stability argument based in the mean value property. In \cite{dong} Dong provided an alternative proof of the $C^{1,\alpha}$-regularity result obtained in \cite{cscs}. Notably, his proof method differs from that in \cite{cscs} and embraces more generalized elliptic systems featuring variable coefficients. Furthermore, extensions to $C^{1,\text{Dini}}$ interfaces and domains encompassing multiple sub-domains were also explored.

With regarding to degenerate operators, we mention the work by  Bianca, Pimentel, and Urbano \cite{bpu} where they analyzed \eqref{transp} in the framework of Orlicz spaces. Natural challenges (as the lack of representation operator formulas and the degenerate characteristics of the diffusion process) arose from their study. In the scenario of bounded interface data, the local boundedness of weak solutions was proved. Also an estimate for their gradient in BMO spaces was obtained via Campanato type arguments, furnishing Log-Lipschitz regularity across the transmission interface. Furthermore, by relaxing the data constraints, local Hölder continuity for the solutions was also obtained.

Meanwhile, for fully nonlinear elliptic transmission, there have some  noteworthy preceding results.   Very recently in Soria-Carro and Stinga  \cite{MPA} the authors developed a robust study concerning the  regularity theory of viscosity solutions to transmission problems for fully nonlinear second order uniformly elliptic equations.  The authors of the mentioned reference obtained  $C^{0,\alpha}, C^{1,\alpha}$ and $C^{2,\alpha}$ estimates for viscosity solutions to \eqref{E1} for some $\alpha \in (0,1).$ The proof in \cite{MPA} relies on a crucial global  estimate for homogenous flat interface transmission problem 
\begin{equation}\label{E_H}
\left\{
\begin{array}{rclcl}
 F(D^2u) &=& 0& \mbox{in} &   B^{\pm}_1 \\
 u^+_{x_n} - u^{-}_{x_n}&=& 0 &\mbox{on}& T=B_1 \cap \{x_n=0\},
\end{array}
\right.
\end{equation}
where $F$ is a convex operator, combined with a certain geometric oscillation estimate which has its roots in the seminal paper of Caffarelli \cite{Caff1}.  In our case, we will replace Lemma 4.19 in \cite{MPA} with Proposition \ref{Grad} in the first case and Theorem 4.16 in \cite{MPA} with Theorem \ref{flat} in the second case.

\section{Preliminaries and statements of the main results}
 
 For a given $r >0$ and $x \in \mathbb{R}^n$, we denote by $B_r(x) \subset \mathbb{R}^n$ the ball of radius $r$ centered at $x=(x',x_n)$, where $x'=(x_1,x_2,\ldots, x_{n-1})$ and $B'_r(x) \subset \mathbb{R}^{n-1}$ the ball of radius $r$ centered at $x'$. We write $B_r = B_r(0)$ and $B^{\pm}_r = B_r \cap \mathbb{R}^n_{\pm}$. Also, we write $T_r \colon= \{(x',0) \in \mathbb{R}^{n-1} : |x'| < r\}$ and $T_r(x_0) \colon= T_r + x'_0$ where $x'_0 \in \mathbb{R}^{n-1}$. Given $r>0$, we write $\Omega^{\pm}_r = \Omega^{\pm} \cap B_r$.  $\nabla' $ denotes the gradient in the variables $x'$, $D^2_{x'}$ denotes the Hessian in the variables $x'$.  In what follows, $  \textrm{USC}(B_1)$ and $ \textrm{LSC}(B1)$  are the sets of upper semicontinuous and lower semicontinuous functions on $B_1$, respectively.  For $0 < \lambda \le \Lambda$, we define the Pucci's extremal operators $\mathcal{M}^{\pm}_{\lambda, \Lambda}$ as
$$
\mathcal{M}^{+}_{\lambda,\Lambda}(M) =  \lambda \cdot \sum_{e_i <0} e_i + \Lambda \cdot \sum_{e_i >0} e_i \quad \textrm{and} \quad \mathcal{M}^{-}_{\lambda,\Lambda}(M) =  \lambda \cdot \sum_{e_i >0} e_i + \Lambda \cdot \sum_{e_i <0} e_i 
$$
where $\{e_i : 1 \le i \le n\}$ denote the eigenvalues of $M$. 

Since our focus is on regularity for fully nonlinear elliptic transmission problems \eqref{E1},  we introduce the to  appropriate notion of weak solutions.
\begin{definition}[{\bf Viscosity solutions}]\label{VSLp}Consider a function  $F \in \mathcal{E}$$(\lambda,\Lambda)$.  We say that  $u \in \textrm{USC}(B_1) $ is  a viscosity solution to the transmission problem \eqref{E1}    if the following conditions are true:
\begin{enumerate}
\item[a)] If $x_0 \in \Omega^{\pm}$ and $\phi \in C^2(B_{\delta}(x_0))$ is  touching the function $u$ by above at $x_0$ in $B_1$, then
$$
F\left(D^2 \phi(x_0)\right)  \geq f^{\pm}(x_0)
$$
and if $x_0 \in \Gamma$, $\phi \in C^2(\overline{B^+_{\delta}(X_0)}) \cap C^2(\overline{B^{-}_{\delta}(x_0)})$, then $\phi^+_{\nu}(x_0) - \phi^{-}_{\nu}(x_0) \ge g(x_0)$  at $x_0 \in \Gamma$.

\item[b)] If $x_0 \in \Omega^{\pm}$ and $\phi \in C^2(B_{\delta}(x_0))$ is touching the function  $u$ by below at $x_0$ in $B_1$, then
$$
F\left(D^2 \phi(x_0)\right)  \leq f^{\pm}(x_0)
$$
and if $x_0 \in \Gamma$, $\phi \in C^2(\overline{B^+_{\delta}(X_0)}) \cap C^2(\overline{B^{-}_{\delta}(x_0)})$, then $\phi^+_{\nu}(x_0) - \phi^{-}_{\nu}(x_0) \le g(x_0)$  at $x_0 \in \Gamma$.
\end{enumerate}
\end{definition}

\begin{definition}
 We denote by  $\overline{\mathcal{S}}\left(\lambda,\Lambda, f^{\pm}\right)$ and $\underline{S}\left(\lambda,\Lambda, f^{\pm}\right)$  the sets of all continuous functions $u$ that satisfy $\mathcal{M}^{+}_{\lambda, \Lambda}(D^2 u) \ge f^{\pm}$,  $\mathcal{M}^{-}_{\lambda,\Lambda}(D^2 u) \le f^{\pm}$ in the viscosity sense respectively. We also denote
 $$
 	\mathcal{S}\left(\lambda, \Lambda,f^{\pm}\right) \colon=  \overline{\mathcal{S}}\left(\lambda, \Lambda,f^{\pm}\right) \cap \underline{\mathcal{S}}\left(\lambda, \Lambda, f^{\pm}\right)$$
 and	
 	$$
 \mathcal{S}^{\star}\left(\lambda, \Lambda, f^{\pm}\right) \colon=  \overline{\mathcal{S}}\left(\lambda, \Lambda,|f^{\pm}|\right) \cap \underline{\mathcal{S}}\left(\lambda, \Lambda, -|f^{\pm}|\right).$$
  
 \end{definition}
 
 We collect some tools from transmission problems, referring to \cite{MPA} for more details.   An important piece of information which we need in our article concerns the notion of stability of viscosity solutions, i.e., the limit of a sequence of viscosity solutions turns
out to be a viscosity solution of the corresponding limiting equation. We refer to the
following Lemma, whose proof can be found in \cite[Lemma 5.1]{MPA}.
\begin{lemma}[{\bf Stability Lemma}]\label{Est}
For $j\in\mathbb{N}$ let $\Gamma_j \in C^2$ and assume that $u_{j} \in C(B_1)$ is a viscosity solution to the problem
$$
	\left\{
		\begin{array}{rclcl}
			F_j(D^2 u_j) = f^{\pm}_{j} &\mbox{in}&   \Omega^{\pm}_j \\
			(u^{+}_j)_{\nu} - (u^{-}_j)_{\nu}=g_j  &\mbox{on}&  \Gamma_j,
		\end{array}
		\right.
$$
where $\Gamma_j = B_1 \cap \{x_n = \psi_j(x')\}$ for $\psi_j \in C^2(B'_1)$, $f^{\pm}_j \in C(\Omega^{\pm}_j \cup \Gamma_j)$, and $g_j \in C(\Gamma_j)$, for $g \ge 1$. Suppose also that there are continuous functions $u, f^{\pm}, g$, and elliptic operators $F^{\pm} \in \mathcal{E}$$(\lambda,\Lambda)$ such that
\begin{enumerate}
\item[(i)] $F_{j} \to F$ uniformly on compact subsets of $\textrm{Sym}(n)$;
\item[(ii)] $u_j \to u$ uniformly on compact subsets of $B_1$;
\item[(iii)] $\|f^{\pm}_j - f^{\pm}\|_{L^{\infty}(\Omega^{\pm}_j)} \to 0$;
\item[(iv)] $\|g_j - g\|_{L^{\infty}(\Gamma_j)} = \sup_{x'\in B'_1} |g_j(x', \psi_j(x')) -g(x',0)| \to 0$;
\item[(v)] $\Gamma_j \to T$ in $C^2$ in the sense that $\|\psi_j\|_{C^2(B'_1)} \to 0$.
\end{enumerate}
Then $u \in C(B_1)$ is a viscosity solution to the problem
$$
\left\{
		\begin{array}{rclcl}
			F(D^2 u) = f^{\pm} &\mbox{in}&   B^{\pm}_1 , \\
			u^{+}_{\nu} - u^{-}_{\nu}=g  &\mbox{on}& T.
		\end{array}
		\right.
$$
\end{lemma}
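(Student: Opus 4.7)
I would follow the classical viscosity-stability template, handling the interior equation on each $B_1^\pm$ and the transmission condition on $T$ separately, and treating only the subsolution case (test functions touching from above) since the supersolution case is symmetric. The two tasks require markedly different techniques: the interior step is the usual touching-point extraction, while the transmission step requires flattening the moving interfaces $\Gamma_j$ and controlling normal derivatives in the limit.

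\textbf{Interior stability.} For $x_0 \in B_1^+$, let $\phi \in C^2$ touch $u$ from above at $x_0$, made strict by adding $\eta|x-x_0|^2$ with $\eta>0$ later sent to $0$. Hypothesis (v) forces $\dist(x_0, \Gamma_j) \to \dist(x_0, T) > 0$, so $x_0 \in \Omega_j^+$ eventually. Uniform convergence $u_j \to u$ produces touching points $x_j \to x_0$ with $x_j \in \Omega_j^+$, at which $F_j(D^2\phi(x_j)) \ge f_j^+(x_j)$. Passing to the limit using (i) and (iii), then sending $\eta \to 0^+$, yields $F(D^2\phi(x_0)) \ge f^+(x_0)$. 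The $B_1^-$ subcase is identical.

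\textbf{Interface stability.} For $x_0 \in T$, let $\phi \in C^2(\overline{B_\delta^+(x_0)}) \cap C^2(\overline{B_\delta^-(x_0)})$ touch $u$ from above and assume for contradiction that $\phi_\nu^+(x_0) - \phi_\nu^-(x_0) < g(x_0) - \mu$ for some $\mu>0$. To test against $u_j$ (whose natural interface is $\Gamma_j$, not $T$) I flatten via the diffeomorphism $\Psi_j(x', x_n) = (x', x_n - \psi_j(x'))$, which sends $\Gamma_j$ to $T$ and converges to the identity in $C^2$ by (v). Define $\phi_j = \phi \circ \Psi_j$ on each of $\Omega_j^\pm \cap B_\delta(x_0)$; then $\phi_j \to \phi$ in $C^2$ on each closed side, and the jump $(\phi_j^+)_{\nu_j} - (\phi_j^-)_{\nu_j}$ across $\Gamma_j$ converges to $\phi_\nu^+ - \phi_\nu^-$ across $T$ (this uses the chain rule together with the convergence $\nu_j \to e_n$ of the unit normals). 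After a quadratic perturbation enforcing strict touching, pick maxima $x_j \to x_0$ of $u_j - \phi_j$ on $\overline{B_\delta(x_0)}$.

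\textbf{Main obstacle and conclusion.} The decisive step is to rule out $x_j \in \Omega_j^\pm$: an interior maximum would only give $F_j(D^2\phi_j(x_j)) \ge f_j^\pm(x_j)$, which is consistent with the assumed transmission defect and yields no contradiction. The standard remedy is to perturb $\phi$ by a piecewise-affine term with slope $\mu/4$ on the $+$ side and $-\mu/4$ on the $-$ side (smoothed tangentially), whose Hessian vanishes away from $T$ but which additively shifts the normal-derivative jump by $\mu/2$; a correct choice of sign forces the maxima of $u_j - \phi_j$ onto $\Gamma_j$ for all large $j$. Once $x_j \in \Gamma_j$, the viscosity transmission inequality for $u_j$ gives $(\phi_j^+)_{\nu_j}(x_j) - (\phi_j^-)_{\nu_j}(x_j) \ge g_j(x_j)$, and the $C^2$ convergence of $\phi_j$, $\nu_j \to e_n$, and hypothesis (iv) pass this to $\phi_\nu^+(x_0) - \phi_\nu^-(x_0) \ge g(x_0) - \mu/2$, contradicting our choice of $\mu$. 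The technical heart is this construction of the perturbation that localizes the touching points onto the moving interface; once installed, the remainder is $C^2$ bookkeeping driven by (i)–(v).
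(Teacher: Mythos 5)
The paper itself does not present a proof of this lemma; it cites \cite[Lemma~5.1]{MPA}, so there is no ``paper's own proof'' here to match your attempt against line by line. Assessed on its own merits, your interior-stability step, the flattening via $\Psi_j$, and the $C^2$ bookkeeping for $\phi_j = \phi\circ\Psi_j$, $\nu_j\to e_n$, $g_j\to g$ are all sound and in line with the standard compactness template.

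The gap is in the decisive step you yourself flag as ``the technical heart'': the claim that a piecewise-affine perturbation forces the maxima of $u_j-\phi_j$ onto $\Gamma_j$. Adding a convex kink $+c\,|x_n-\psi_j(x')|$ to $\phi_j$ penalizes being off $\Gamma_j$, but the resulting maximizers $x_j$ only satisfy $\dist(x_j,\Gamma_j)\to 0$; nothing forces $x_j\in\Gamma_j$. If $x_j\in\Omega_j^{+}$ (say) along a subsequence, the only viscosity inequality you get is the interior one $F_j(D^2\phi_j(x_j))\ge f_j^{+}(x_j)$, which in the limit yields $F(D^2\phi^{+}(x_0))\ge f^{+}(x_0)$; this is not in contradiction with the assumed deficit $\phi^{+}_\nu(x_0)-\phi^{-}_\nu(x_0) < g(x_0)-\mu$, so the argument stalls exactly where it claims to close. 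There is also a sign tension you do not resolve: the convex kink $+c\,|x_n|$ preserves touching from above but \emph{increases} the normal-derivative jump by $2c$ (weakening the contradiction hypothesis, and since the Hessian of the kink vanishes it contributes nothing to rule out interior touching), while the concave kink $-c\,|x_n|$ decreases the jump but destroys the touching-from-above property near $T$. The actual resolution in the literature is more delicate: one either works with a weak ``max''-type formulation of the transmission condition at interface points (so that an interior inequality in the limit already suffices) and proves separately that it is equivalent to the pointwise jump condition of Definition~\ref{VSLp}, or one exploits uniform ellipticity to turn $\phi$ into a strict interior supersolution on both sides in a full neighborhood of $x_0$ (via a quadratic subtraction calibrated against the strict-touching gap) so that interior maxima are genuinely impossible. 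As written, your proposal identifies the right obstacle but does not overcome it, and one should consult \cite[Lemma~5.1]{MPA} for the correct mechanism.
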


The fundamental result  to obtain  regularity results  of viscosity solutions to the transmission problem described in  \eqref{E1} will be a ABP estimate. For a proof of such result   see for instance \cite[Theorem 2.1]{MPA}
\begin{lemma}[{\bf ABP estimate}]\label{ABP-fullversion}
	Consider the surface $\Gamma = B_1 \cap \{x_n = \psi(x')\}$ and  $u$ functions satisfying the conditions
	\begin{equation*}
		\left\{
		\begin{array}{rclcl}
			u\in S(\lambda,\Lambda,f^{\pm}) &\mbox{in}&   \Omega^{\pm} \\
			u^+_{\nu} - u^{-}_{\nu} = g(x)  &\mbox{on}&  \Gamma,
		\end{array}
		\right.
	\end{equation*}
	with $f^{\pm} \in C(\Omega^{\pm}) \cap L^{\infty}(B_1)$, $g \in L^{\infty}(\Gamma)$, and $\psi \in C^{1,\alpha}(\overline{B'_1})$, for some $0 < \alpha < 1$. Then the following estimate is true 
		\begin{eqnarray*}
		\|u\|_{L^{\infty}(\Omega)}\leq \|u\|_{L^{\infty}(\partial B_1)}+C(\Vert g\Vert_{L^{\infty}(\Gamma)}+\Vert f^{+}\Vert_{L^{n}(\Omega^+)} + \Vert f^{-}\Vert_{L^{n}(\Omega^-)})
	\end{eqnarray*}
	where $C$ is a constant which depends   only on  the quantities $n$, $\lambda$, $\Lambda$.
\end{lemma}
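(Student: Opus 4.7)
My plan is to extend the classical Alexandrov--Bakelman--Pucci concave envelope argument to handle the transmission interface $\Gamma$. By symmetry (replacing $u$ with $-u$ interchanges the Pucci operators and negates $g$) it will suffice to prove the one-sided bound
\begin{equation*}
\sup_{\Omega} u \le \sup_{\partial B_1} u^+ + C\bigl(\|g\|_{L^\infty(\Gamma)}+\|f^+\|_{L^n(\Omega^+)}+\|f^-\|_{L^n(\Omega^-)}\bigr).
\end{equation*}
Subtracting $\sup_{\partial B_1}u^+$, extending $u^+$ by zero outside $B_1$, and setting $M=\sup_{B_1}u^+$, I would introduce the concave envelope $\bar u$ of $u^+$ on $B_2$. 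Alexandrov's comparison principle then gives
\begin{equation*}
|B_{M/4}(0)| \le \bigl|\nabla\bar u(\mathcal{C})\bigr|, \qquad \mathcal{C}=\{u^+=\bar u\}\cap\overline{B_1},
\end{equation*}
reducing the whole task to controlling the Lebesgue measure of $\nabla\bar u(\mathcal{C})$ by the data.

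\textbf{Bulk contribution.} Next, I would split $\mathcal{C}$ as $(\mathcal{C}\cap\Omega^+)\cup(\mathcal{C}\cap\Omega^-)\cup(\mathcal{C}\cap\Gamma)$. At an Alexandrov point $x_0\in\mathcal{C}\cap\Omega^\pm$, the osculating paraboloid for $\bar u$ touches $u$ from above, so $u\in\overline{\mathcal{S}}(\lambda,\Lambda,f^\pm)$ will give $\mathcal{M}^+(D^2\bar u)\ge f^\pm$ a.e.\ on $\mathcal{C}\cap\Omega^\pm$. Concavity of $\bar u$ forces $D^2\bar u\le 0$, so $\mathcal{M}^+(D^2\bar u)=\lambda\, \tr D^2\bar u$, and the AM--GM inequality applied to the (nonpositive) eigenvalues produces $|\det D^2\bar u|\le C(n,\lambda)|f^\pm|^n$. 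The area formula will then deliver the standard bound
\begin{equation*}
\bigl|\nabla\bar u(\mathcal{C}\cap\Omega^\pm)\bigr|\le C\int_{\mathcal{C}\cap\Omega^\pm}|f^\pm(x)|^n\,dx,
\end{equation*}
reproducing the classical ABP contribution of each bulk region.

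\textbf{Interface contribution and conclusion.} The core new ingredient is the estimate on $\mathcal{C}\cap\Gamma$, where $\bar u$ is not a priori $C^{1,1}$. At $x_0\in \mathcal{C}\cap\Gamma$ I would take a supporting affine function $\ell$ for $\bar u$, which touches $u$ from above on both sides of $\Gamma$; by testing the viscosity transmission condition against two-sided perturbations of $\ell$ in the normal direction, one sees that the normal slope $\ell_\nu(x_0)$ must lie in an interval of length $|g(x_0)|\le\|g\|_{L^\infty(\Gamma)}$, while its tangential component is determined by $\nabla'(\bar u|_\Gamma)$ and bounded by $CM$. Exploiting that $\Gamma$ is an $(n-1)$-rectifiable graph (since $\psi\in C^{1,\alpha}$), a projection/coarea argument should then give
\begin{equation*}
\bigl|\nabla\bar u(\mathcal{C}\cap\Gamma)\bigr|\le C\|g\|_{L^\infty(\Gamma)} M^{n-1}.
\end{equation*}
Combining the three pieces produces $M^n\le C\bigl(\|f^+\|_{L^n}^n+\|f^-\|_{L^n}^n\bigr)+C\|g\|_{L^\infty}M^{n-1}$, after which Young's inequality will absorb the last term and complete the proof. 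The main obstacle will be precisely the interface contribution: pinning down the slope-interval characterisation rigorously in the viscosity setting requires careful two-sided test-function constructions, and one must then quantify the measure of the slope image using the $(n-1)$-dimensional nature of $\Gamma$, where the $C^{1,\alpha}$ regularity of $\psi$ plays a crucial role.
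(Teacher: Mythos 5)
The paper does not prove this lemma itself; it cites \cite[Theorem 2.1]{MPA}, so the comparison below is with the general structure of the argument in that reference rather than with a proof reproduced in the present text. Your concave-envelope strategy is a natural and defensible way to attack the statement, and the bulk contribution (the estimate of $|\nabla\bar u(\mathcal C\cap\Omega^\pm)|$ via $\mathcal M^+_{\lambda,\Lambda}(D^2\bar u)\ge f^\pm$, concavity of $\bar u$, and AM--GM) is standard and correct.

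The gap is in the interface contribution, and it occurs at two distinct places. First, the mechanism producing the slope constraint is not what you describe. A single supporting affine function $\ell$ has a single slope, and ``perturbing $\ell$ in the normal direction'' by an affine correction only recovers the sign condition $g(x_0)\le 0$ at contact points. The actual constraint on the subdifferential $\partial\bar u(x_0)$ comes from testing with a \emph{kinked} piecewise-affine function: if $p_1=(p',a_1)$ and $p_2=(p',a_2)$ both lie in $\partial\bar u(x_0)$ with $a_1<a_2$, then $\phi=\min(\ell_{p_1},\ell_{p_2})$ still touches $u$ from above, is $C^2$ on each side of $\Gamma$, and the transmission inequality $\phi^+_\nu-\phi^-_\nu\ge g(x_0)$ yields $a_2-a_1\le -g(x_0)\le\|g\|_{L^\infty}$. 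This is the correct pointwise statement, and you should make it explicit. Second, and more seriously, the passage from the pointwise constraint on each $\partial\bar u(x_0)$ to the global measure bound
$\bigl|\nabla\bar u(\mathcal C\cap\Gamma)\bigr|\le C\|g\|_{L^\infty}M^{n-1}$
is not a one-line ``projection/coarea argument.'' The constraint says that the \emph{fiber of $\partial\bar u(x_0)$ over a fixed tangential slope} has length at most $\|g\|_{L^\infty}$, at each fixed $x_0$. But the image $\nabla\bar u(\mathcal C\cap\Gamma)=\bigcup_{x_0\in\mathcal C\cap\Gamma}\partial\bar u(x_0)$ may stack many such intervals over the same tangential slope $p'$ coming from different contact points $x_0$: monotonicity of the subdifferential gives no a priori obstruction to this when $x_1-x_2$ is tangent to $\Gamma$ and $p_1-p_2$ is normal. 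To close the argument one must exploit that the contact points where this degeneracy occurs (i.e.\ where $\nabla'\bar u|_\Gamma$ fails to be injective) correspond to segments on which $\bar u$ is affine and hence contribute zero measure, and then run a genuine Fubini/disintegration argument in the slope variables, keeping track of the $C^{1,\alpha}$ graph structure of $\Gamma$. None of this is present in the proposal, and it is precisely the hard part of the lemma. As an aside, the reference proof circumvents these subdifferential issues on $\Gamma$ by constructing an explicit barrier with prescribed normal-derivative jump $\|g\|_{L^\infty}$ across the interface and comparing $u$ to it, which reduces the interface term to a bulk term and avoids the delicate contact-set geometry on $\Gamma$ altogether; if you pursue the concave-envelope route, the degeneracy discussion above must be supplied.
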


\subsection{Uniqueness and Existence of Viscosity Solutions of flat interface problems} This subsection is concerned with the Uniqueness and Existence of viscosity solution to fully nonlinear elliptic transmission problems
\begin{equation} \label{Resul}
\left\{
\begin{array}{rclcl}
	F(D^2u) &=& f^{\pm} & \mbox{in} & B^{\pm}_1,\\
	u^+_{\nu} - u^{-}_{\nu}&=& g & \mbox{on} & T = B_1 \cap \{x_n=0\}\\
	u&=&\varphi & \mbox{on} & \partial B_1
\end{array}
\right.
\end{equation}
The next proof follows the ideas from \cite[Theorem 4.7]{MPA} with
minor modifications. For this reason, we will omit it here.
\begin{theorem}\label{comparação}
	Let $f^{\pm}_1, f^{\pm}_2 \in C(B^{\pm}_1) \cap L^{\infty}(B^{\pm})$ and $g_1,g_2 \in C(T)$.  Also, let  $u \in \textrm{USC}(\overline{B_1}), v \in \textrm{LSC}(\overline{B_1})$ be bounded functions satisfying 
	$$
	\left\{
	\begin{array}{rclcl}
		F^{\pm}(D^2u) &\geq& f_{1} & \mbox{in} & B^{\pm}_1,\\
		u^{+}_{x_n} - u^{-}_{x_n}&\geq& g_{1} & \mbox{on} & T\\
	\end{array}
	\right.
	\quad
	\textrm{and}
	\quad
	\left\{
	\begin{array}{rclcl}
		F^{\pm}(D^2v) &\leq& f_{2} & \mbox{in} & B^{\pm}_1,\\
		v^+_{x_n} - v^{-}_{x_n}&\leq& g_{2} & \mbox{on} & T\\
	\end{array}
	\right.
	$$
	in the viscosity sense. It holds that
	$$
	\left\{
	\begin{array}{rclcl}
		w \in \underline{S}\left(\frac{\lambda}{n},\Lambda,f^{\pm}_{1}-f^{\pm}_{2}\right) & \mbox{in} & \Omega,\\
		w^+_{x_n} -w^{-}_{x_n} \geq g_{1}-g_{2} & \mbox{on} & T,\\
	\end{array}
	\right.
	$$
where  $w=u-v$ ,	in the viscosity sense.
\end{theorem}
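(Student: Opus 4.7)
The proof naturally splits into two parts: verifying that $w = u - v$ is a viscosity supersolution of $\mathcal{M}^-_{\lambda/n,\Lambda}(D^2 w) = f_1^\pm - f_2^\pm$ on each subdomain $\Omega^\pm$, and verifying the transmission jump condition $w^+_{x_n} - w^-_{x_n} \geq g_1 - g_2$ on $T$ in the viscosity sense. The strategy will follow \cite[Theorem 4.7]{MPA}, adapting the classical doubling-of-variables argument for differences of viscosity sub- and super-solutions to the transmission framework.

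For the interior Pucci bound in $\Omega^\pm$, the plan is to fix a smooth test function $\phi$ touching $w$ from below at $x_0 \in \Omega^\pm$ and to consider the auxiliary functional
\[
\Psi_\varepsilon(x,y) = u(x) - v(y) - \phi(x) - \frac{|x-y|^2}{2\varepsilon},
\]
localized near $(x_0,x_0)$. As $\varepsilon \to 0$, the maximizers $(x_\varepsilon, y_\varepsilon)$ will converge to $(x_0,x_0)$, and the Jensen--Ishii maximum principle will furnish symmetric matrices $X_\varepsilon \in \overline{J}^{2,+}u(x_\varepsilon)$ and $Y_\varepsilon \in \overline{J}^{2,-}v(y_\varepsilon)$ whose difference is bounded above by $D^2\phi(x_0)$ up to an $o(1)$ error. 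The sub/super-solution inequalities $F(X_\varepsilon) \geq f_1^\pm(x_\varepsilon)$ and $F(Y_\varepsilon) \leq f_2^\pm(y_\varepsilon)$, combined with the uniform ellipticity of $F$ to compare $F(X_\varepsilon) - F(Y_\varepsilon)$ with the Pucci operator evaluated on $X_\varepsilon - Y_\varepsilon$, and then passing to the limit in $\varepsilon$, would give the claimed Pucci inequality at $D^2\phi(x_0)$. The factor $\lambda/n$ traces back to the trace-type estimate needed to control the difference $F(X_\varepsilon) - F(Y_\varepsilon)$ by $\mathcal{M}^-_{\lambda/n,\Lambda}$ applied to $X_\varepsilon - Y_\varepsilon$.

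For the transmission jump condition on $T$, the plan is to take $\phi \in C^2(\overline{B^+_\delta(x_0)}) \cap C^2(\overline{B^-_\delta(x_0)})$ touching $w$ from above at $x_0 \in T$ and to build, via a one-sided penalization, admissible upper and lower viscosity test functions for $u$ and $v$ separately whose traces on $T$ and one-sided normal derivatives match those of $\phi$ in the limit. The transmission subsolution condition $u^+_\nu - u^-_\nu \geq g_1$ for $u$, together with the supersolution condition $v^+_\nu - v^-_\nu \leq g_2$ for $v$, would then subtract to yield the asserted $\phi^+_\nu(x_0) - \phi^-_\nu(x_0) \geq g_1(x_0) - g_2(x_0)$ for $w$ at $x_0$.

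The main technical obstacle is the consistent realization of these test-function splittings as the penalization parameter tends to zero: both the second-order matrices extracted by Jensen--Ishii and the one-sided normal derivatives at $T$ must be controlled uniformly so that the sub- and super-solution inequalities for $u$ and $v$ can legitimately be subtracted. Near the interface the test function is only piecewise $C^2$, so the doubling construction has to be arranged in such a way that the auxiliary maximizers stay on the correct side of $T$, or at least their excursions across $T$ are quantitatively negligible; this is precisely where the ``minor modifications'' of \cite[Theorem 4.7]{MPA} alluded to by the authors come in.
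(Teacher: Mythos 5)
There are several issues, the most serious being a genuine gap in the transmission step. Note first that the paper itself offers no proof here; it cites \cite[Theorem 4.7]{MPA}, whose argument is of Caffarelli--Cabr\'e type (sup/inf-convolutions of $u$ and $v$, semiconvexity of the difference $u^\epsilon-v_\epsilon$, Jensen's lemma at points of a.e.\ twice differentiability), and the dimensional factor $\lambda/n$ in the conclusion is precisely the signature of that route, cf.\ \cite[Theorem 5.3]{Caff2}. Your sketch instead relies on the doubling-of-variables functional $\Psi_\epsilon$ together with the Jensen--Ishii theorem on sums, which is a different mechanism. Two remarks on your interior step. The test function $\phi$ must touch $w=u-v$ from \emph{above}, not from below: the conclusion is a subsolution-type Pucci inequality $\mathcal{M}^{+}_{\lambda/n,\Lambda}\bigl(D^2\phi(x_0)\bigr)\ge f_1(x_0)-f_2(x_0)$, and your own construction only makes sense if $u(x)-v(y)-\phi(x)-|x-y|^2/(2\epsilon)$ attains a local \emph{maximum} near $(x_0,x_0)$, which requires $u-v-\phi$ to have a local maximum at $x_0$; with ``from below'' the functional $\Psi_\epsilon$ has no local max and the theorem on sums cannot be invoked. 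Also, your explanation of the constant $\lambda/n$ is off: in the Jensen--Ishii chain one obtains $X-Y\le D^2\phi(x_0)+o(1)$, hence $f_1-f_2\le F(X)-F(Y)\le \mathcal{M}^{+}_{\lambda,\Lambda}(X-Y)\le\mathcal{M}^{+}_{\lambda,\Lambda}\bigl(D^2\phi(x_0)\bigr)+o(1)$ with \emph{no} dimensional loss at all; the $n$-dependence comes from the sup/inf-convolution proof, not from the comparison of $F(X)-F(Y)$ with a Pucci operator.

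The substantive gap is the transmission inequality $w^{+}_{x_n}-w^{-}_{x_n}\ge g_1-g_2$ on $T$, which is the new content relative to \cite[Theorem 5.3]{Caff2} and is precisely what ``adapting the argument to the transmission framework'' has to deliver. Your proposal reduces this to the phrase ``one-sided penalization'' and the assertion that admissible upper and lower test functions for $u$ and $v$ can be produced with matching one-sided normal derivatives, but no construction is given, and the obvious candidates do not work: the theorem on sums is formulated for $C^2$ test functions on open sets and does not directly produce piecewise-$C^2$ test functions straddling $T$; a regularized barrier such as $\phi+v_\epsilon$ is only $C^{1,1}$ from one side and hence is not admissible in Definition~\ref{VSLp}; and one must also control the doubled maximizers so that the one-sided normal derivatives of the split test functions actually reconstruct those of $\phi$ in the limit. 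You correctly flag this as the main obstacle in your final paragraph, but the proposal as written does not resolve it --- this is exactly the content supplied by \cite[Theorem 4.7]{MPA} that the present theorem is borrowing.
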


\begin{theorem}[{\bf Existence and Uniqueness}]\label{Unicidade}
	Suppose that the conditions  $(A1),$ and $(SC)$ hold. Consider functions $f^{\pm} \in C(B^{\pm} \cup T) \cap L^{\infty}(B_1)$, $g \in C(T)$, and $\varphi \in C(\partial B_1)$. Then there exists a unique viscosity solution $u \in C(\overline{B_1})$ of \eqref{Resul}.	\end{theorem}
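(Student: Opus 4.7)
The plan is to establish uniqueness via the comparison principle already stated in Theorem \ref{comparação}, and then produce existence through a classical Perron-type argument adapted to the transmission setting, following the template of \cite[Section~4]{MPA}.

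\emph{Uniqueness.} Suppose $u_{1},u_{2}\in C(\overline{B_{1}})$ are two viscosity solutions of \eqref{Resul} with identical data $(f^{\pm},g,\varphi)$. Apply Theorem~\ref{comparação} with $f^{\pm}_{1}=f^{\pm}_{2}$ and $g_{1}=g_{2}$: the difference $w=u_{1}-u_{2}$ satisfies $w\in \underline{S}(\lambda/n,\Lambda,0)$ in each $B^{\pm}_{1}$, $w^{+}_{x_{n}}-w^{-}_{x_{n}}\ge 0$ on $T$, and $w=0$ on $\partial B_{1}$. The ABP estimate of Lemma~\ref{ABP-fullversion} then yields $\sup_{B_{1}}w\le 0$. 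Swapping the roles of $u_{1}$ and $u_{2}$ gives $u_{1}\equiv u_{2}$.

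\emph{Existence.} I would implement Perron's method. First, construct explicit barriers $w_{\pm}\in C(\overline{B_{1}})$ matching $\varphi$ on $\partial B_{1}$ from the appropriate side. A natural ansatz is
\begin{equation*}
w_{\pm}(x) \;=\; \pm M \;\mp\; K\,|x_{n}| \;\mp\; A(1-|x|^{2}),
\end{equation*}
where the signs are chosen so that uniform ellipticity forces $F(\mp 2AI)$ of the correct sign in $B^{\pm}_{1}$ (pick $A$ large compared with $\|f^{\pm}\|_{\infty}$), the corner of $|x_{n}|$ produces a normal-derivative jump $\mp 2K$ dominating $g$ (pick $K\ge \tfrac{1}{2}\|g\|_{\infty}$), and $M$ is chosen large enough that $w_{-}\le\varphi\le w_{+}$ on $\partial B_{1}$. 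Define the Perron envelope
\begin{equation*}
u(x) \;=\; \sup\bigl\{\, v(x) \;:\; w_{-}\le v\le w_{+} \text{ in }\overline{B_{1}},\ v \text{ is a viscosity subsolution of }\eqref{Resul}\,\bigr\}.
\end{equation*}
Standard arguments (suprema of subsolutions are subsolutions, plus a bump construction to repair failures of the supersolution inequality) show that the upper semicontinuous envelope $u^{*}$ is a subsolution of \eqref{Resul} and the lower semicontinuous envelope $u_{*}$ is a supersolution; the barrier construction forces $u^{*}=u_{*}=\varphi$ on $\partial B_{1}$. Applying Theorem~\ref{comparação} and Lemma~\ref{ABP-fullversion} to $u^{*}-u_{*}$ forces $u^{*}\le u_{*}$ in $B_{1}$, whence $u^{*}=u_{*}=u\in C(\overline{B_{1}})$ is the required solution.

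The delicate point will be verifying the transmission inequality for $u^{*}$ and $u_{*}$ in the viscosity sense at points of $T$. Concretely, if a test function $\phi\in C^{2}(\overline{B^{+}_{\delta}(x_{0})})\cap C^{2}(\overline{B^{-}_{\delta}(x_{0})})$ touches $u^{*}$ from above at $x_{0}\in T$ and the strict inequality $\phi^{+}_{\nu}(x_{0})-\phi^{-}_{\nu}(x_{0})<g(x_{0})$ holds, one must perturb $\phi$ on each side of $T$ into a strict subsolution of \eqref{Resul} that still lies above $u$ in a neighborhood of $x_{0}$ yet exceeds it strictly at $x_{0}$, violating the maximality of $u$. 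Executing this one-sided bump while preserving both the interior PDE inequality on $B^{\pm}_{\delta}(x_{0})$ and the strictness of the jump condition is the main technical obstacle; since this is exactly the construction carried out in \cite[Section~4]{MPA}, the authors are justified in omitting the details and appealing to that reference.
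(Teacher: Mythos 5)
Your overall route — uniqueness via the comparison principle (Theorem~\ref{comparação}) plus the one-sided maximum-principle direction of ABP, existence via Perron's method with explicit barriers — is exactly what underlies the results the paper cites ([MPA, Corollary~4.8] and [MPA, Theorem~4.11]); the paper's ``proof'' is merely that citation, so in spirit your proposal reconstructs the same argument rather than taking a different one.

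There are two points to tighten, though. First, there is a sign error in the barrier. With $w_{+}(x)=M-K|x_{n}|-A(1-|x|^{2})$ one has $D^{2}w_{+}=+2AI$ (not $-2AI$ as your remark about $F(\mp 2AI)$ suggests), so $F(D^{2}w_{+})\ge 2A\lambda\|I\|>0$ by uniform ellipticity, which is the \emph{sub}solution inequality, not the supersolution inequality $F(D^{2}w_{+})\le f^{+}$. You want the opposite sign on the quadratic term, $w_{\pm}(x)=\pm M\mp K|x_{n}|\pm A(1-|x|^{2})$, so that $D^{2}w_{+}=-2AI$ and $F(D^{2}w_{+})\le -2A\lambda\|I\|\le f^{+}$ for $A$ large; the kink $-K|x_{n}|$ then gives $(w_{+})^{+}_{x_{n}}-(w_{+})^{-}_{x_{n}}=-2K\le g$ for $K\ge\tfrac12\|g\|_{\infty}$, as you intended. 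Second, the $w_{\pm}$ as described only sandwich $\varphi$ on $\partial B_{1}$ (you pick $M$ so that $w_{-}\le\varphi\le w_{+}$ there); they do not match $\varphi$ pointwise, so by themselves they do not force the Perron envelope to attain the boundary data continuously. Establishing $u^{*}=u_{*}=\varphi$ on $\partial B_{1}$ requires local barriers at each boundary point $\xi_{0}\in\partial B_{1}$ (functions equal to $\varphi(\xi_{0})+\varepsilon$ at $\xi_{0}$ and above $\varphi$ nearby), a separate construction carried out in [MPA]. Since you explicitly defer the technical work — including the bump argument for the transmission inequality — to [MPA, Section~4], both issues are repairable, but the sign in the barrier should be fixed before the sketch can be called correct.
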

\begin{proof}
The result follows by the direct application of \cite[Corollary 4.8]{MPA} and \cite[Theorem 4.11]{MPA} in conjunction with the ABP estimate given in Lemma \ref{ABP-fullversion}.
\end{proof}


\subsection{Main assumptions}
In this subsection, we detail the main assumptions used throughout the paper.  

\vskip .5cm  
\begin{enumerate}
\item[{\bf A1.}] {\bf [Reducibility condition]}  We will assume that $F(0)=0$ and $f^{+}(0)=f^{-}(0)=0$. 
\end{enumerate}
 
\vskip .2cm 
 
\begin{enumerate}
\item[{\bf A2.}] {\bf [Interface Regularity]} The interface $\Gamma = B_1 \cap \{x_n = \psi(x')\}$ is given by the graph of a function $\psi \in C^2(\mathbb{R}^{n-1})$. Moreover, we suppose that $|g(0)| \cdot \|D^2_{x'} \psi(0)\| =0$.
\end{enumerate}

\vskip .2cm 

\begin{enumerate}
\item[{\bf A3.}] {\bf [Integrabilility of the source term]} The functions $f^{\pm}$ are continuous at $0$ and $f^{\pm} \in C^{0,\alpha}(0)$ with
$$
\left( \intav{B_r \cap \Omega^{\pm}} |f^{\pm}(x) |^n dx \right)^{1/n} \le \mathfrak{c}_{f^{\pm}} \cdot r^{\alpha},
$$
for all $r>0$ small. 
\newline
\item[{\bf A4.}] {\bf [Transmission boundary condition.]} We suppose that $g \in C^{0,\alpha}(0)$. 
\end{enumerate}
\vspace{0.5 cm}

\begin{remark} A few remarks are in order:
\begin{itemize}
\item[(i)]  It is interesting to note that our results indeed provided $C^{2,\alpha}$ at the origem, i.e., $C^{2,\alpha}(0)$. It is not difficult to verify that $C^{2,\alpha}(0)$ implies $C^{2,\alpha}(x_0)$. 
\item[(ii)] Notice that {\bf A1.}, is not restrictive.   In fact, otherwise, by unifor ellipticity, there exists $s \in \mathbb{R}$ such that $F(s Id_{n \times n})= f^{\pm}(0)$ and $|s| \le \frac{|f^{\pm}(0)|}{\lambda}$. Hence, consider $\tilde{F}(M)= F(M+s Id_{n \times n})-f^{\pm}(0)$ and $v = u - \frac{s}{2}|x|^2$. Then $\tilde{F} \in \mathcal{E}$$(\lambda,\Lambda)$, $\tilde{F}(0)=0$, and $\tilde{F}(D^2 v)= f^{\pm}-f^{\pm}(0)$. 
\item[(iii)] The hypothesis {\bf A2.}, which plays an important role in reference \cite{MPA}, allowed the authors to derive certain global $C^{2,\alpha}$ estimates for the problem considered in their work that includes the case of a curved surface. Such a requirement will be necessary due to the fact that some arguments from \cite{MPA}, which depend on the aforementioned hypothesis, will be applied in this manuscript.
\end{itemize}
\end{remark}

 Our first result of this paper gives a theoretical contribution to such a question. Namely, the first result of this paper is
 \vspace{0.5 cm}


\begin{theorem}[{\bf $C^{2,\alpha}$ estimate under Small Ellipticity Aperture}]\label{BMO}
Let $\alpha\in (0,1)$ be arbitrary. Assume the hypotheses {\bf A1.}- {\bf A4.} hold and  that $0$ lies within $\Gamma$. Then there exists $\delta>0$ such that if 
  \begin{equation} \label{Cor}
  \frac{\Lambda}{\lambda}-1 \le \delta,
  \end{equation}
    then any bounded viscosity solution $u$ to the problem \eqref{E1} satisfies $u^{\pm} \in C^{2,\alpha}(0)$, that is, there are   polynomials with quadratic growth given by 
  $$
  \mathfrak{P}^{\pm}(x) = \frac{1}{2} x^t \cdot \mathcal{A}^{\pm} \cdot x + \mathcal{B}^{\pm} \cdot x + \mathfrak{c}
  $$
 satisfying 
  $$
  	\|u^{\pm} - \mathfrak{P}^{\pm}\|_{L^{\infty}(\Omega^{\pm}_r)} \le C r^{2+\alpha}, \,\,\, \textrm{for all} \,\,\, r \ll 1
  $$
  with  $C_0 >0$ depending only on $n,\lambda, \Lambda$ and $\alpha$. Moreover, the estimate below holds true
	$$
		\|\mathcal{A}^{\pm}\|_{\textrm{Sym}(n)} + |\mathcal{B}^{\pm}| + |\mathfrak{c}| \le C_0  \|\psi\|_{C^{2,\alpha}(0)}\left(  \|g\|_{C^{1,\alpha}(0)} + [f^+]_{C^{\alpha}(0)} + [f^-]_{C^{\alpha}(0)} \right).
	$$
\end{theorem}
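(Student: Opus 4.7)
The plan is to follow the standard \emph{geometric tangential equation} / \emph{improvement of flatness} strategy, treating the small-ellipticity regime as a perturbation of the Laplace transmission problem. The central idea is that when $\Lambda/\lambda - 1$ is tiny, any operator $F \in \mathcal{E}(\lambda,\Lambda)$ is uniformly close to a multiple of the Laplacian (after normalization), and the ``tangent" problem becomes the flat-interface Laplace transmission problem, whose solutions are actually harmonic across $T$ (because the jump condition is trivial there) and thus $C^\infty$. This smoothness of the tangent profile is what will be transferred back to $u$ by a Caffarelli-type iteration.

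The main steps, in order, would be as follows. First, by assumption \textbf{A2} and the compatibility $|g(0)|\|D^2_{x'}\psi(0)\|=0$, perform the $C^2$ change of variables $y=(x',x_n-\psi(x'))$ to reduce to a flat interface $T=B_1\cap\{x_n=0\}$; the transformed operator remains in $\mathcal{E}(\lambda,\Lambda)$ up to a constant and the transformed jump data stays in $C^{0,\alpha}(0)$. Using \textbf{A1}, the reduction $F(0)=0$, $f^\pm(0)=0$, $g(0)=0$ (subtracting a suitable affine plus quadratic correction), we may normalize $\|u\|_{L^\infty(B_1)}\le 1$ and the data to be as small as we wish. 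Second, prove the key \textbf{Approximation Lemma} (this is the role played by Proposition \ref{Grad}): for every $\eta>0$ there exists $\delta=\delta(\eta,n,\alpha)>0$ such that whenever $\Lambda/\lambda-1\le\delta$ and the oscillations of $f^\pm,g,\psi$ at the origin are at most $\delta$, one has
\[
\|u-h\|_{L^\infty(B_{1/2})}\le \eta,
\]
where $h\in C(\overline{B_{1/2}})$ solves the homogeneous flat-interface problem $\Delta h^\pm=0$ in $B^\pm_{1/2}$ with $h^+_{x_n}=h^-_{x_n}$ on $T$. The proof is by compactness-contradiction: if the conclusion failed one would extract a sequence $u_j$ with corresponding operators $F_j$ whose ellipticity aperture tends to zero, and by Lemma \ref{Est} (stability) together with the interior Hölder estimates for $\mathcal{S}^\star(\lambda,\Lambda,f^\pm)$ and the ABP bound (Lemma \ref{ABP-fullversion}), $u_j$ converges uniformly to a solution $h$ of the tangent problem, giving a contradiction. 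A short argument using that $h^+_\nu=h^-_\nu$ and $h^+=h^-$ on $T$ shows $h$ extends as a harmonic function on $B_{1/2}$, and hence admits a second-order Taylor polynomial $\mathfrak{P}$ at the origin with universal bounds.

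Third, prove the \textbf{one-step improvement}: there exist universal $0<\rho\ll 1$ and $C>0$ and, for $\delta$ small enough, a quadratic polynomial pair $\mathfrak{P}^\pm(x)=\tfrac12 x^t\mathcal{A}^\pm x+\mathcal{B}^\pm\cdot x+\mathfrak{c}$ satisfying the interface compatibility $\mathfrak{P}^+(x',0)=\mathfrak{P}^-(x',0)$ and $(\mathfrak{P}^+)_\nu-(\mathfrak{P}^-)_\nu=0$ on $T$, with $F(\mathcal{A}^\pm)=0$ and $\|\mathcal{A}^\pm\|+|\mathcal{B}^\pm|+|\mathfrak{c}|\le C$, such that
\[
\|u-\mathfrak{P}^\pm\|_{L^\infty(\Omega^\pm\cap B_\rho)}\le \rho^{2+\alpha}.
\]
This follows by choosing $\mathfrak{P}$ from step two and picking $\rho$ so that $\rho^{2+\alpha}$ dominates both the $L^\infty$-error $\eta$ from the Approximation Lemma and the $O(\rho^3)$ Taylor remainder of the smooth harmonic tangent; one then selects $\eta\ll\rho^{2+\alpha}$ and adjusts $\delta$ accordingly.

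Fourth, \textbf{iterate}. Set $v_1=u$ and inductively define $v_{k+1}(x)=\rho^{-k(2+\alpha)}\bigl(v_k(\rho x)-\mathfrak{P}_k^\pm(\rho x)\bigr)$, which solves a transmission problem of the same form with rescaled operators $F_k(M)=\rho^{k\alpha}F(\rho^{-k\alpha}M+\mathcal{A}_k^\pm)$ (still in $\mathcal{E}(\lambda,\Lambda)$ by uniform ellipticity), rescaled source terms $\tilde f^\pm_k$, and jump data $\tilde g_k$, all of which satisfy the smallness required by step three thanks to the assumption \textbf{A3}--\textbf{A4} on Hölder decay and the iterative subtraction. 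Summing the telescoping series of polynomial corrections $\mathfrak{P}_k^\pm$ yields the claimed quadratic polynomial $\mathfrak{P}^\pm$ and the decay $\|u-\mathfrak{P}^\pm\|_{L^\infty(\Omega^\pm_{r})}\le C r^{2+\alpha}$ for $r=\rho^k$, which extends to all small $r$ by interpolation. The bound on $\|\mathcal{A}^\pm\|+|\mathcal{B}^\pm|+|\mathfrak{c}|$ comes from summing the geometric series of correction polynomials in terms of the Hölder seminorms of $f^\pm,g,\psi$.

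The main obstacle is carrying out step three rigorously: the tangent polynomial produced by the Approximation Lemma lives on the full ball, so one must carefully split it into $\mathfrak{P}^+$ and $\mathfrak{P}^-$ that are (i) compatible with the transmission jump after subtraction of the linear piece encoding $g(0)$ and (ii) well-behaved under the change of variables flattening $\Gamma$, so that on each iteration the resulting rescaled $v_{k+1}$ still satisfies both a transmission-type equation and the smallness hypotheses of the Approximation Lemma. Managing the accumulated error coming from the interface curvature (controlled by $\|D^2_{x'}\psi\|$ and ultimately by $\|\psi\|_{C^{2,\alpha}(0)}$) and ensuring it does not spoil the smallness needed to reapply the Approximation Lemma at each scale is the most delicate bookkeeping.
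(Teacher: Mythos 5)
Your high-level strategy (compactness/contradiction to an approximation lemma, one-step quadratic improvement, then dyadic iteration) is the same as the paper's, which proves Theorem~\ref{BMO} via Lemmas~\ref{Lema1}--\ref{Lema2}, \ref{LSS2}, and \ref{L02}. However, there are two genuine gaps in your outline.

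First, the flattening change of variables $y=(x',x_n-\psi(x'))$ is not innocuous for a fully nonlinear second-order operator. Under this transformation $D^2_x u$ becomes $A^t D^2_y v\,A$ plus first-order terms proportional to $D^2_{x'}\psi\cdot D_y v$, so the transformed problem is $\tilde F(D^2_y v, D_y v, y)=f^\pm$, which is not of the form $F(D^2 v)$ and is not in $\mathcal{E}(\lambda,\Lambda)$ with the same constants. Your claim that the transformed operator ``remains in $\mathcal{E}(\lambda,\Lambda)$ up to a constant'' is false as stated, and dealing with the extra gradient-dependent terms would require its own (nontrivial) perturbative treatment. The paper avoids this entirely: it keeps the curved interface and instead uses the rescaling $w(y)=u(\eta y)/\tau$ (Step~1 of the proof, cf.\ \eqref{Norm}), under which $\psi_\eta(y')=\eta^{-1}\psi(\eta y')$ has $\|D^2_{x'}\psi_\eta\|=\eta\|D^2_{x'}\psi\|$ and $[\psi_\eta]_{C^{2,\alpha}}=\eta^{1+\alpha}[\psi]_{C^{2,\alpha}}$, so the interface is made as flat as needed while $F_{\eta,\tau}\in\mathcal{E}(\lambda,\Lambda)$ is preserved exactly; the curved interface then disappears in the compactness limit via Lemma~\ref{Est}.

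Second, your statement that the tangent profile $h$ ``extends as a harmonic function on $B_{1/2}$'' relies on the jump $h^+_{x_n}-h^-_{x_n}$ vanishing on $T$, but the approximation lemma you need (Lemma~\ref{Lema1}/\ref{Lema2}) has jump data $D'\cdot x'+b$ with $D'=\nabla' g(0)$, which is generically nonzero even after normalizing $g(0)=0$. So $h$ is not globally harmonic; what one actually uses is $C^3$ regularity of $h^\pm$ up to the flat interface for the Laplace transmission problem with smooth interface data, which is the content of Proposition~\ref{Grad} and the estimate $\|v^\pm\|_{C^3(\overline{B^\pm_{1/3}})}\le C_0$ in the proof of Lemma~\ref{LSS2}. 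The conclusion you want still holds, but the justification you give is wrong and should be replaced by the one-sided $C^3$ estimate.

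Finally, a minor bookkeeping point: the iteration must also propagate the interface compatibility relations \eqref{S3}--\eqref{S4} (the differences $\mathcal{A}^+-\mathcal{A}^-$ and $\mathcal{B}^+-\mathcal{B}^-$ are prescribed by $g(0)$ and $\nabla'g(0)$, not zero as your step three suggests), and the trace condition $\mathrm{Tr}(\mathcal{A}^\pm_k)=0$ coming from the limiting Laplace equation. These constraints are essential to keep the rescaled $v_{k+1}$ in the class where the approximation lemma re-applies.
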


Theorem \ref{BMO} improves the result established in \cite{MPA} because we prove the result for all $\alpha \in (0,1)$. The idea of the proof of  Theorem \ref{BMO}  is based  in \cite{MPA}  along with a compactness approach as in \cite{Caff1}, see also \cite{JV1}.The insights however come from the prominent heuristics explained in \cite{JV1}. We shall interpret the Laplace equation as the {\it geometric tangential equation} of the {\it manifold} formed by fully nonlinear elliptic operators $F$ as $\mathfrak{e} := 1- \frac{\lambda}{\Lambda} \rightarrow 0$. In turns, we show that at every scale it is possible to find a harmonic function close to a viscosity solution to \eqref{E1}. The degree of closeness will be measured by how close the ellipticity constants of $F$ are of being constant, i.e., provided the ellipticity aperture is small enough. Iterating such an argument in $\rho-$adic balls shows that the graph of a viscosity solution $u$ can be approximated in $B^{\pm}_\rho$ by a quadratic polynomial with an error of order $\sim \mbox{O}\left(\rho^{2+\alpha} \right)$.

Our second result addresses a Schauder estimate for \eqref{E1} when the operator is assumed only convexity of the superlevel (sublevel) sets, which means that $F$ is quasiconcave (resp. quasiconvex), or,  alternatively, we impose some asymptotic concavity properties, namely $F=F(M)$ is concave (resp. convex or ``close to a linear function") when $M$ is large.  For instance, by way of illustration, such a limiting profile $F^{\star}$  appears naturally in singularly perturbed free boundary problems ruled by fully nonlinear equations, whose Hessian of solutions
blows-up through the phase transition of the model, i.e., $\partial\{u^{\varepsilon}> \varepsilon\}$, where $u^{\varepsilon}$ satisfies in the viscosity sense
$$
F(D^2 u^{\varepsilon}) = \mathcal{Q}_0(x)\frac{1}{\varepsilon} \zeta \left(\frac{u^{\varepsilon}}{\varepsilon}\right).
$$
For such approximations, we have $0< \mathcal{Q}_0 \in C^0(\overline{\Omega})$, $0\leq \zeta \in C^{\infty}(\R)$  with $\supp ~\zeta = [0,1]$. For this reason, in the above model, the limiting free boundary condition is governed by the F$^{\star}$ rather than F, i.e.,
$$
F^{\star}( \nabla u(z_0) \otimes \nabla u(z_0)) = 2\mathrm{T},  \quad z_0 \in \partial\{u_0>0\}
$$
in some appropriate viscosity sense, for a certain total mass $\mathrm{T}>0$ (cf. \cite[Section 6]{RT} for some enlightening example and details). Furthermore, limit profiles  also appears in \cite{Al} for an account of the Schauder theory of viscosity solutions of $F(D^2 v)=0$ in $B_1$ under weak concavity assumptions. 

\begin{definition}
 For a uniformly elliptic operator $F : \textrm{Sym}(n) \rightarrow \mathbb{R}$, we say that it is quasiconcave (resp. quasiconvex) if it is a quasiconcave (quasiconvex) function of $M \in \textrm{Sym}(n)$, namely for all $M_1, M_2 \in \textrm{Sym}(n)$ and $\theta \in [0,1]$ we have
 $$
 F((1-\theta) M_1 + \theta M_2) \ge \min \{F(M_1), F(M_2)\} \, (\textrm{respc.} \,\, \le \max\{F(M_1),F(M_2)\}).
 $$
 \end{definition}

Useful to the subsequent analysis, we define 
\begin{equation} \label{Hom1}
\alpha_{0} \colon= \sup \left\{ \beta \in (0,1) \,\, \Biggl |
\begin{array}{ccccccc}
\exists \,\, \mathfrak{C}_{\beta} >0 \,\, \textrm{such that} \,\, \|v\|_{C^{2,\beta}(B_{1/2})} \le \mathfrak{C}_{\beta} \|v\|_{L^{\infty}(B_1)}  \\
 \textrm{for any viscosity solution of} \,\, F(D^2 v)=0  \\ \textrm{in} \,\,  B_1, \textrm{where}  F \in \mathcal{E}(\lambda,\Lambda) \,\, \textrm{is a}\\ \,\,
 \textrm{quasiconvex/quasiconcave operator}
 \end{array}
\right\}.
\end{equation}

The existence of $\alpha_0 \in (0,1)$ is guaranteed by the following result. 

\begin{proposition}[\cite{Al}, Theorem 4.1]\label{usar}
Let $F \in \mathcal{E}(\lambda,\Lambda)$ be quasiconvex and continuous, $\varphi \in C(\partial B_1)$  and $u$ be a continuous viscosity solution to 
$$
\left\{
	\begin{array}{rclcl}
		F(D^2 u) &=& 0 & \mbox{in} & B_1,\\
		u&=& \varphi & \mbox{on} & \partial B_1
	\end{array}
	\right.
$$
Then $u \in C^{2,\alpha_0}_{loc}(B_1)$ for some $\alpha_0 \in (0,1)$ with the estimate below  holding true
$$
\|u\|_{C^{2,\alpha_0}(\overline{B_{1/2}})} \le C \|u\|_{L^{\infty}(B_1)},
$$
where $C$ is a constant that depends only on $ n,\lambda,$ and $\Lambda$.
\end{proposition}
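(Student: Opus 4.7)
The plan is to pass from the quasiconvex operator $F$ to a convex uniformly elliptic operator $G$ with the same zero level set, and then invoke the classical Evans--Krylov theorem on $G(D^2u)=0$.

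Define $K := \{M \in \textrm{Sym}(n) : F(M) \leq 0\}$. Quasiconvexity of $F$ gives that $K$ is convex, while uniform ellipticity forces $K$ to be a lower set in the positive semidefinite order: if $M' \leq M$ and $M \in K$, then $F(M') \leq F(M) \leq 0$, so $M' \in K$. A convex lower set in $\textrm{Sym}(n)$ is representable as the intersection of closed half-spaces whose outward normals are positive semidefinite, so there exist a family $\{A_\beta\}_{\beta\in\mathcal{B}}$ of positive semidefinite matrices and constants $\{c_\beta\}$ with
$$
M \in K \iff G(M) := \sup_{\beta\in\mathcal{B}}\bigl(\textrm{tr}(A_\beta M) - c_\beta\bigr) \leq 0.
$$
By construction $G$ is convex as a supremum of affine functions, and $\{G \leq 0\} = \{F \leq 0\}$. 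Continuity plus strict monotonicity of $F$ in the PSD order further yields $\{F = 0\} = \partial K = \{G = 0\}$ and $\{F > 0\} = \{G > 0\}$.

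A short calculation with touching functions then shows the equivalence of viscosity solutions of the two equations: if $\phi \in C^2$ touches $u$ from above at $x_0$, then $F(D^2\phi(x_0)) \geq 0 \iff D^2\phi(x_0) \notin \textrm{int}(K) \iff G(D^2\phi(x_0)) \geq 0$, and symmetrically for supersolutions. Hence $u$ is a viscosity solution of $G(D^2 u)=0$ in $B_1$. Provided the uniform ellipticity of $G$ is established (see below), the classical Evans--Krylov interior estimate for convex uniformly elliptic equations yields $u \in C^{2,\alpha_0}_{\mathrm{loc}}(B_1)$ for some $\alpha_0 = \alpha_0(n,\lambda,\Lambda) \in (0,1)$, together with
$$
\|u\|_{C^{2,\alpha_0}(\overline{B_{1/2}})} \leq C\,\|u\|_{L^\infty(B_1)},
$$
for a constant $C=C(n,\lambda,\Lambda)$, which completes the proof.

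The main technical obstacle is to show that $G \in \mathcal{E}(\lambda,\Lambda)$, that is, that after an appropriate normalization one has $\lambda I \leq A_\beta \leq \Lambda I$ uniformly in $\beta$. The lower set structure of $K$ gives $A_\beta \geq 0$ automatically, but sharp quantitative ellipticity requires comparing the outward growth along each supporting half-space $\{\textrm{tr}(A_\beta \cdot) \leq c_\beta\}$ with the two-sided bound $\lambda\|N\| \leq F(M+N)-F(M) \leq \Lambda\|N\|$ for $N \geq 0$ at boundary points of $K$, and then discarding redundant supporting hyperplanes whose normals fail the ellipticity bounds. Should this step prove delicate, one can instead mollify $F$ by an inf-convolution preserving both quasiconvexity and uniform ellipticity, apply the reduction above to the smoothed problem (where supporting hyperplanes are uniquely determined by the gradient $DF$), and pass to the limit via the stability Lemma \ref{Est}.
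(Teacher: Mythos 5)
The paper itself offers no proof of this proposition: it is imported verbatim from \cite{Al} (Theorem 4.1), so your argument is best judged as a self-contained alternative to the cited source rather than against an in-paper proof. Your reduction is sound in outline: $K=\{F\le 0\}$ is closed, convex, and closed under subtracting positive semidefinite matrices; uniform ellipticity gives $\{F<0\}=\mathrm{int}\,K$ and $\{F=0\}=\partial K$; and since the viscosity sub/supersolution tests only see the sets $\{F\ge 0\}=\{G\ge 0\}$ and $\{F\le 0\}=\{G\le 0\}$, the passage from $F(D^2u)=0$ to $G(D^2u)=0$ is legitimate. The step you flag as the ``main technical obstacle'' is in fact not an obstacle, and no discarding of supporting hyperplanes is needed: if $A\ge 0$ with $\|A\|_{op}=1$ is an outward normal of a supporting half-space of $K$ at $M_0\in\partial K$, then for unit vectors $e,v$ and parameters with $\Lambda s\le \lambda t$ one has $F(M_0-t\,e\otimes e+s\,v\otimes v)\le \Lambda s-\lambda t\le 0$ by the two-sided ellipticity bound, so this matrix lies in $K$; the supporting inequality then gives $-t\langle Ae,e\rangle+s\langle Av,v\rangle\le 0$, and choosing $v$ a top eigenvector of $A$ and $s=(\lambda/\Lambda)t$ yields $\langle Ae,e\rangle\ge \lambda/\Lambda$ for every unit $e$. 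Hence \emph{every} supporting normal satisfies $(\lambda/\Lambda)I\le A\le I$ after normalization, and $G=\sup_\beta\bigl(\mathrm{tr}(A_\beta\,\cdot)-c_\beta\bigr)$ (up to normalization, the signed distance to $K$) is convex and uniformly elliptic with constants depending only on $n,\lambda,\Lambda$. Two smaller points: since $u$ is merely continuous you must invoke Evans--Krylov in its viscosity-solution form (e.g. \cite{Caff2}, Theorem 6.6), whose estimate reads $\|u\|_{C^{2,\alpha_0}(\overline{B_{1/2}})}\le C\bigl(\|u\|_{L^\infty(B_1)}+|G(0)|\bigr)$; under the paper's standing normalization $F(0)=0$ we have $0\in\partial K$, hence $G(0)=0$ and the homogeneous bound of the proposition follows, with $\alpha_0=\alpha_0(n,\lambda,\Lambda)$. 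Finally, the inf-convolution fallback is unnecessary and, as sketched, riskier than the main route: it is not clear that such a regularization preserves the zero level set, which is exactly what the whole reduction hinges on.
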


Finally, we can state our second result.

\begin{theorem}[{\bf $C^{2,\alpha}$ estimates for viscosity solutions of quasiconcave equations}]\label{BMOQ}
Let $F \in \mathcal{E}(\lambda,\Lambda)$ be a quasiconvex or quasiconcave operator. Fix any $\alpha \in (0,\alpha_0)$ and assume the hypotheses  {\bf A1.--A4.}. If  $u$ is a bounded viscosity solution to the problem \eqref{E1}, then $u^{\pm} \in C^{2,\alpha}(0)$, in the sense that  there are  quadratic polynomials 
  $$
  \mathfrak{P}^{\pm}(x) = \frac{1}{2} x^t \cdot \mathcal{A}^{\pm} \cdot x + \mathcal{B}^{\pm} \cdot x + \mathfrak{c},
  $$
  such that
  $$
  	\|u^{\pm} - \mathfrak{P}^{\pm}\|_{L^{\infty}(\Omega^{\pm}_r)} \le C r^{2+\alpha}, \,\,\, \textrm{for all} \,\,\, r \ll 1
  $$
  with  $C_0 >0$ depending only on $n,\lambda, \Lambda$ and $\alpha$. Moreover, the estimate below holds true
	$$
		\|A^{\pm}\|_{\textrm{Sym}(n)} + |B^{\pm}| + |c| \le C_0  \|\psi\|_{C^{2,\alpha}(0)}\left(  \|g\|_{C^{1,\alpha}(0)}+[f^+]_{C^{\alpha}(0)} + [f^-]_{C^{\alpha}(0)} \right).
	$$
\end{theorem}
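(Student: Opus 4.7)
The strategy mirrors the geometric tangential approach used for Theorem~\ref{BMO}, but replaces the role of the Laplace equation (the tangential equation in the small-aperture regime) with the homogeneous flat-interface problem \eqref{E_H} for quasiconvex/quasiconcave $F$, whose solutions enjoy $C^{2,\alpha_0}$ regularity by Proposition~\ref{usar}. The plan has three main stages: flattening, an approximation/compactness lemma, and iteration at the origin.

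\textbf{Step 1: Flattening the interface and normalizing the data.} Using assumption \textbf{A2.} ($\psi \in C^2$ with $|g(0)|\cdot\|D^2_{x'}\psi(0)\|=0$), perform a change of variables straightening $\Gamma$ to $T=\{x_n=0\}$ in a neighborhood of the origin. This transforms the equation into a perturbed uniformly elliptic fully nonlinear equation whose nonlinearity $\tilde F$ is still quasiconvex (resp.\ quasiconcave) up to lower order corrections controlled by $\|\psi\|_{C^{2,\alpha}(0)}$, and modifies the transmission condition by an error controlled by $\|g\|_{C^{1,\alpha}(0)}\cdot\|D^2_{x'}\psi(0)\|$, which vanishes to the required order by \textbf{A2.} After flattening, use \textbf{A1.} to subtract the affine part at $0$ so that $\tilde F(0)=0$, $f^{\pm}(0)=0$ and $u(0)=|\nabla u(0)|=0$, and rescale so that $\|u\|_{L^\infty(B_1)}\le 1$ and the tail quantities $\mathfrak{c}_{f^\pm}$, $[g]_{C^{1,\alpha}(0)}$ are as small as desired.

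\textbf{Step 2: Approximation by a homogeneous flat-interface solution.} Fix $\alpha\in(0,\alpha_0)$ and a small $\rho\in(0,1/2)$ to be chosen. I would prove an approximation lemma of the form: for every $\eta>0$ there exists $\delta=\delta(\eta,n,\lambda,\Lambda)>0$ such that if $\|f^\pm\|_{L^\infty(\Omega^\pm_1)}+\|g\|_{L^\infty(T_1)}+\|\psi\|_{C^2(B'_1)}\le \delta$ and $u$ is a normalized viscosity solution of \eqref{E1} with $\|u\|_{L^\infty(B_1)}\le 1$, then there exists $h\in C(B_{3/4})$ solving
\begin{equation*}
\left\{\begin{array}{rcll}
\tilde F(D^2 h) &=& 0 & \text{in } B^\pm_{3/4},\\
h^+_{x_n}-h^-_{x_n} &=& 0 & \text{on } T_{3/4},
\end{array}\right.
\end{equation*}
with $\|u-h\|_{L^\infty(B_{3/4})}\le\eta$. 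The proof is the standard compactness/contradiction argument: extract a sequence $(u_j,F_j,\Gamma_j,f^\pm_j,g_j,\psi_j)$ violating the conclusion, use the ABP bound of Lemma~\ref{ABP-fullversion} together with interior Hölder estimates to get uniform convergence on compact subsets, and pass to the limit via the Stability Lemma~\ref{Est}. The limit $F_\infty$ is uniformly elliptic and inherits quasiconvexity/quasiconcavity (this passes to uniform limits), so $h$ solves the claimed homogeneous flat-interface problem.

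\textbf{Step 3: Regularity of the tangential profile and the first iteration.} Apply the flat-interface regularity result (Theorem~\ref{flat} in the present paper, which plays here the role of \cite[Thm.~4.16]{MPA}): the approximant $h$ admits quadratic polynomials $\mathfrak{P}^\pm_h(x)=\tfrac12 x^t\mathcal{A}^\pm_h x+\mathcal{B}^\pm_h\cdot x$, matching across $T$ in the required way, such that for every $r\in(0,1/2)$,
\begin{equation*}
\|h^\pm-\mathfrak{P}^\pm_h\|_{L^\infty(B^\pm_r)}\le C_0\, r^{2+\alpha_0}\|h\|_{L^\infty(B_1)},
\end{equation*}
with $\|\mathcal{A}^\pm_h\|+|\mathcal{B}^\pm_h|\le C_0$. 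Choosing $\rho$ small so that $C_0\rho^{2+\alpha_0}\le \tfrac12\rho^{2+\alpha}$, and then $\eta=\rho^{2+\alpha}/2$, the triangle inequality yields
$\|u^\pm-\mathfrak{P}^\pm_h\|_{L^\infty(\Omega^\pm_\rho)}\le \rho^{2+\alpha}$.

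\textbf{Step 4: Iteration.} I would prove inductively the existence of quadratic polynomials $\mathfrak{P}^\pm_k$ (with the matching/transmission compatibility inherited from $h$) such that
\begin{equation*}
\|u^\pm-\mathfrak{P}^\pm_k\|_{L^\infty(\Omega^\pm_{\rho^k})}\le \rho^{k(2+\alpha)},\qquad \|\mathfrak{P}^\pm_k-\mathfrak{P}^\pm_{k-1}\|\le C\rho^{(k-1)(2+\alpha)}.
\end{equation*}
The inductive step rescales $v_k(x):=\rho^{-k(2+\alpha)}(u^\pm-\mathfrak{P}^\pm_k)(\rho^k x)$, which solves a transmission problem of the same type with a rescaled operator $F_k(M):=\rho^{k\alpha}F(\rho^{-k\alpha}M+\mathcal{A}^\pm_k)$ (still in $\mathcal{E}(\lambda,\Lambda)$ and still quasiconvex/quasiconcave, since these properties are preserved under affine rescalings of the argument), with source terms and transmission data whose relevant norms decay by construction in \textbf{A3.--A4.} like $r^\alpha$. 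Applying Steps~2--3 to $v_k$ produces $\mathfrak{P}^\pm_{k+1}$. The summable increments yield a limit polynomial $\mathfrak{P}^\pm$ with $\|u^\pm-\mathfrak{P}^\pm\|_{L^\infty(\Omega^\pm_r)}\le Cr^{2+\alpha}$ for every $r\ll 1$, and the bound on the coefficients follows by tracking the constants through the iteration and undoing the initial normalization; this produces the factor $\|\psi\|_{C^{2,\alpha}(0)}(\|g\|_{C^{1,\alpha}(0)}+[f^+]_{C^\alpha(0)}+[f^-]_{C^\alpha(0)})$.

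\textbf{Main obstacle.} The delicate point is Step~3: one must verify that quasiconvexity/quasiconcavity of $F$ is genuinely compatible with the transmission structure, i.e.\ that the flat-interface approximant $h$ admits a polynomial approximation on \emph{both} sides simultaneously, with the jump of $\partial_{x_n}\mathfrak P^\pm_h$ respecting the homogeneous transmission condition $h^+_{x_n}=h^-_{x_n}$ on $T$. This is precisely the content of Theorem~\ref{flat}, which replaces \cite[Thm.~4.16]{MPA} and is the genuinely new ingredient; the rest of the argument is structurally parallel to Theorem~\ref{BMO}. The secondary technical nuisance is ensuring that quasiconvexity is preserved under the affine rescalings $M\mapsto \rho^{k\alpha}F(\rho^{-k\alpha}M+\mathcal{A}^\pm_k)$ used in the iteration, which is a direct check from the definition since translation and positive scaling of the argument and codomain preserve quasiconvex sublevel sets.
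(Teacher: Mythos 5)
Your proposal follows essentially the same route as the paper: a smallness normalization, a compactness/approximation lemma landing on the homogeneous flat-interface tangential problem (the analog of Lemma~\ref{L_1}), regularity of the tangential profile furnished by Theorem~\ref{flat} and Proposition~\ref{prop}, and an iteration producing the quadratic polynomial (the analog of Lemma~\ref{L_2}, iterated as in Lemma~\ref{L02}). Your observation that quasiconcavity is stable under the affine rescalings $M\mapsto \rho^{k\alpha}F(\rho^{-k\alpha}M+\mathcal{A}^\pm_k)$ and under uniform limits, and that Theorem~\ref{flat} is the genuinely new ingredient replacing \cite[Thm.~4.16]{MPA}, both match what the paper relies on.

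Two inaccuracies in your Step~1 deserve attention. First, the paper does \emph{not} flatten $\Gamma$ by a change of variables; a flattening diffeomorphism would turn $F(D^2u)$ into a variable-coefficient operator $F(A(x)^TMA(x)+\cdots,x)$, and Theorem~\ref{flat} and Proposition~\ref{prop} are stated only for constant-coefficient, $x$-independent $F$, so flattening would require re-proving the flat-interface regularity for a larger class. The paper instead keeps the curved interface and uses the rescaling $w(y)=u(\eta y)/\tau$ together with compactness and the Stability Lemma~\ref{Est}, so that the curved interface converges to $T$ in $C^2$ as $\|\psi\|_{C^2}\to 0$; your Step~2, which still carries the hypothesis $\|\psi\|_{C^2}\le\delta$, is actually this compactness mechanism, and the flattening in Step~1 is redundant and, if taken literally, introduces an unaddressed gap. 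Second, your reading of \textbf{A2.} is too strong: it asserts only $|g(0)|\cdot\|D^2_{x'}\psi(0)\|=0$, i.e.\ a dichotomy (either $g(0)=0$ or $D^2_{x'}\psi(0)=0$), not that the whole quantity $\|g\|_{C^{1,\alpha}(0)}\cdot\|D^2_{x'}\psi(0)\|$ vanishes ``to the required order.'' The iteration must be run separately in the two cases of the dichotomy, as is done in Lemma~\ref{L02}. Neither issue is fatal, since your Steps~2--4 are parallel to the paper once Step~1 is replaced by the scaling normalization, but they should be corrected.
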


To develop our theory, we will utilize the ideas contained in \cite{MPA} and employ the so-called geometric tangential mechanisms. By doing so, we can obtain good regularity estimates (cf. \cite{WN}) and, in a suitable manner, transfer such estimates to solutions of the original problem \eqref{E1} via compactness and stability processes.

Finally, a few comments. It is interesting to note that Theorem \ref{BMO} (or Theorem \ref{BMOQ}) provides pointwise regularity of $u$ based on the respective pointwise regularity of $f^{\pm}$. It is not difficult to verify that pointwise estimates imply global estimates, as stated in the Corollary \ref{Obs1}.


\section{Schauder type Estimate under Small Ellipticity Aperture}
 In this section  we derive Schauder estimates for viscosity solutions to non-convex fully nonlinear elliptic transmission problems.  
 We assume the Cordes-Landis type conditions on the ellipticity conditions, i.e, when $\frac{\lambda}{\Lambda}\le1 + \delta$.
 
  We begin with some preliminary results.
  
  \begin{proposition}[Interior derivative estimates] \label{Grad}
  
  Let $0 < r \le 1$. Suppose that $\mathfrak{h}$ is a bounded viscosity solution of the transmission problem
   \begin{equation} \label{flat1}
	\left\{
	\begin{array}{rclcl}
		\Delta \mathfrak{h} &=& 0 & \mbox{in} & B^{\pm}_1,\\
		\mathfrak{h}^{+}_{x_n} - \mathfrak{h}^{-}_{x_n}&=& 0 & \mbox{on} & T.
	\end{array}
	\right.
	\end{equation}
	Then, for any radii $0 < \rho \le \frac{r}{2}$, it holds that $\mathfrak{h} \in C^{3}(\overline{B^{\pm}_{\rho}})$ with the following estimates holding true
	\begin{eqnarray*}
	\displaystyle\osc_{B^{\pm}_{\rho}} \left( \mathfrak{h}^{\pm} - \frac{1}{2} x^t \cdot D^2 \mathfrak{h}^{\pm}(0) \cdot x - \nabla \mathfrak{h}(0) \cdot x\right)& \le& C \left( \frac{\rho}{r}\right)^{3} \cdot \osc_{\overline{B_r}} \, \mathfrak{h}\\
	r^2 \|D^2 \mathfrak{h}^{\pm}(0)\|_{\textrm{Sym}(n)} + r |\nabla \mathfrak{h}(0) |&\le& C \cdot \osc_{\overline{B_r}} \, \mathfrak{h},
	\end{eqnarray*}
	where $C>0$ is a constant that depends only on $n,\lambda,$ and $ \Lambda$.
\end{proposition}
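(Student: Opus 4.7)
The plan is to first observe that any viscosity solution $\mathfrak{h}$ of the flat-interface Laplacian transmission problem \eqref{flat1} is in fact classically harmonic throughout the whole of $B_1$; once this reduction is in hand, the two estimates in the statement reduce to standard interior estimates for harmonic functions.

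To establish the global harmonicity, I would first invoke the flat-interface regularity theory from \cite[Section 4]{MPA} (which applies here since $\Delta$ is both concave and convex on $\textrm{Sym}(n)$) to conclude that $\mathfrak{h}\in C^{1,\alpha}(\overline{B^\pm_{r}})$ for each $r<1$, with matching traces and matching normal derivatives across $T$. Integrating by parts against an arbitrary test function $\varphi\in C^{\infty}_c(B_r)$ on each half-ball, and noting that the outward normals to $\partial B^{+}_r$ and $\partial B^{-}_r$ along $T_r$ point in the directions $-e_n$ and $+e_n$ respectively, I would write
$$
\int_{B_r} \nabla \mathfrak{h} \cdot \nabla \varphi \, dx \;=\; -\int_{T_r} \bigl(\mathfrak{h}^+_{x_n} - \mathfrak{h}^-_{x_n}\bigr)\, \varphi\, dx' \;=\; 0,
$$
since $\Delta \mathfrak{h} = 0$ on $B^{\pm}_r$ and the jump of the normal derivative on $T_r$ vanishes. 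Hence $\mathfrak{h}$ is weakly harmonic on $B_r$ and, by Weyl's lemma, is $C^\infty$ (in fact real analytic) and classically harmonic throughout $B_1$.

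Once harmonicity is secured, the bound on the gradient and Hessian at $0$ follows from the classical interior estimates applied to $\mathfrak{h} - \mathfrak{h}(0)$, which give $|D^k\mathfrak{h}(0)| \le C r^{-k}\,\textrm{osc}_{\overline{B_r}}\mathfrak{h}$ for $k=1,2$, and likewise $\|D^3\mathfrak{h}\|_{L^\infty(B_{r/2})} \le C r^{-3}\,\textrm{osc}_{\overline{B_r}}\mathfrak{h}$. Taylor's formula at the origin, applied for $|x|\le \rho \le r/2$, then yields
$$
\Bigl|\mathfrak{h}(x) - \mathfrak{h}(0) - \nabla \mathfrak{h}(0) \cdot x - \tfrac{1}{2}\, x^t \cdot D^2\mathfrak{h}(0) \cdot x \Bigr| \;\le\; C\rho^3 \|D^3 \mathfrak{h}\|_{L^\infty(B_\rho)} \;\le\; C\bigl(\tfrac{\rho}{r}\bigr)^3 \, \textrm{osc}_{\overline{B_r}}\mathfrak{h},
$$
and, using that $\textrm{osc}$ is invariant under additive constants and that $D^2\mathfrak{h}^\pm(0)=D^2\mathfrak{h}(0)$ by smoothness across $T$, the first estimate in the statement follows after taking oscillations over $B^{\pm}_\rho$.

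The hard part is the first step, namely promoting a pure viscosity solution of \eqref{flat1} to a classical harmonic function across the interface: without sufficient regularity at $T$ from each side, the integration-by-parts identity above is only formal. Here I am leaning directly on the flat-interface $C^{1,\alpha}$ results of \cite{MPA}; an alternative would be to argue by a reflection/unique-continuation approach using Theorem \ref{comparação}, though that requires additional care to deal with the sign mismatch produced by odd extensions.
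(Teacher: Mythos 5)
Your argument is correct, but it takes a genuinely different route from the paper's. The paper first constructs the solution $w$ of the Dirichlet problem $\Delta w = 0$ in $B_1$, $w = \mathfrak{h}$ on $\partial B_1$ (the ``$T$'' in the displayed equation is a typo for $\partial B_1$, as the analogous step in Proposition \ref{prop} shows). Since $w$ is classically harmonic across the hyperplane, it automatically satisfies the zero-jump transmission condition, so it is a viscosity solution of \eqref{flat1} with the same boundary data as $\mathfrak{h}$ on $\partial B_1$; by the uniqueness theorem for flat-interface transmission problems (Theorem \ref{Unicidade}), $w = \mathfrak{h}$, and harmonicity of $\mathfrak{h}$ is established in one line. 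You instead proceed in the opposite direction: you upgrade $\mathfrak{h}$ to $C^{1,\alpha}$ on each closed half-ball via the flat-interface regularity of \cite{MPA}, then pass to the classical transmission condition, integrate by parts across $T$ to show $\mathfrak{h}$ is weakly harmonic in the full ball, and invoke Weyl's lemma. Both routes are valid; yours is more constructive and self-contained in its PDE mechanics but leans on the $C^{1,\alpha}$-up-to-the-interface theorem (itself nontrivial), whereas the paper's argument is shorter and sidesteps interface regularity entirely by outsourcing the work to the existence-and-uniqueness machinery. After harmonicity is secured, the remaining scaling and Taylor estimates are essentially identical to the paper's.
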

\begin{proof}
In fact, fix $0 < \rho \le \frac{r}{2}$. Let $w$ be the unique viscosity solution of the Dirichlet problem
$$
	\left\{
	\begin{array}{rclcl}
		\Delta w &=& 0 & \mbox{in} & B^{\pm}_1,\\
		w&=& \mathfrak{h} & \mbox{on} & T.
	\end{array}
	\right.
	$$
	We know that $w \in C^{3}_{\textrm{loc}}(B_1)$ and $\|w\|_{C^{3}(\overline{B_{1/2}})} \le C \|w\|_{L^{\infty}(B_1)}$. By uniqueness of viscosity solution to flat interface problems (see Theorem \ref{Unicidade} ) it follows that $w=\mathfrak{h}$ in $B_1$.  Applying the above result to the function  $v(x)=\mathfrak{h}(rx)-\mathfrak{h}(0)$, for $x \in B_1$, we get $\| v \|_{C^3(\overline{B_{1/2}})} \le C \|v\|_{L^{\infty}(B_1)}$.  Thus, it follows from  the Mean Value Theorem that 
	\begin{eqnarray*}
	\osc_{B^{\pm}_{\rho}} \left( \mathfrak{h}^{\pm} - \frac{1}{2} x^t \cdot D^2 \mathfrak{h}^{\pm}(0) \cdot x - \nabla \mathfrak{h}(0) \cdot x\right) &\le& C \rho^{3}  \|D^3 \mathfrak{h}^{\pm}\|_{L^{\infty}(\overline{B^{\pm}_{\rho}})} \\
	&\le& C \left(\frac{\rho}{r} \right)^3 \|v\|_{L^{\infty}(B_1)} \\
	&\le& C  \left(\frac{\rho}{r} \right)^3 \osc_{\overline{B_r}} \mathfrak{h}.
	\end{eqnarray*}
	Moreover,
	$$
	r^2 \|D^2 \mathfrak{h}^{\pm}(0)\|_{\textrm{Sym}(n)} + r |\nabla \mathfrak{h}(0)| \le C \|v\|_{L^{\infty}(B_1)} \le  C \osc_{\overline{B_r}} \mathfrak{h}.
	$$
\end{proof}

\subsection{Compactness Result } \label{A1}

In this section, we present the proof of Theorem \ref{BMO}. An important step towards the proof consists of some key results that will play a crucial role in our strategy.


\begin{lemma}\label{Lema1}
Suppose that $\Gamma= B_1 \cap \{x_n = \psi(x')\}$, for some function $\psi \in C^2(B'_1)$. Fix $D' \in \mathbb{R}^{n-1}$ and $b \in \mathbb{R}$.  Then, given any $\epsilon>0$ there exists $\delta >0$ (depending only on $\epsilon, n, \lambda,\Lambda$) such that if
$$
	\max \{\|\psi\|_{C^2(B'_1)},  \|g- D' \cdot x -b\|_{L^{\infty}(\Gamma)} , \|f^{\pm}\|_{L^{\infty}(\Omega^{\pm})} \} \le \delta \quad \textrm{and} \quad \Lambda \le (1+\delta) \lambda,
$$
then viscosity solutions $u$ and $\mathfrak{h}$ of
$$
	\left\{
	\begin{array}{rclcl}
		F(D^2 u) &=& f^{\pm} & \mbox{in} & \Omega^{\pm},\\
		u^{+}_{\nu} - u^{-}_{\nu}&=& g & \mbox{on} & \Gamma,
	\end{array}
	\right.
	$$
 and 
 \begin{equation}\label{Laplace}
	\left\{
	\begin{array}{rclcl}
		\Delta \mathfrak{h} &=& 0 & \mbox{in} & B^{\pm}_{3/4},\\
		\mathfrak{h}^{+}_{x_n} - \mathfrak{h}^{-}_{x_n}&=& D'\cdot x'+b & \mbox{on} & B_{3/4} \cap \{x_n=0\},
	\end{array}
	\right.
	\end{equation}
respectively, satisfy
$$
\|u - \mathfrak{h}\|_{L^{\infty}(B_{3/4})} \le \epsilon.
$$
\end{lemma}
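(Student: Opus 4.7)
The plan is to establish the lemma by a compactness-contradiction argument in the geometric tangential spirit, relying on the Stability Lemma~\ref{Est} to identify the limiting equation. Assume that the conclusion fails: then there exist $\epsilon_0 > 0$, a sequence $\delta_j \downarrow 0$, operators $F_j \in \mathcal{E}(\lambda_j, \Lambda_j)$ with $\Lambda_j \le (1+\delta_j)\lambda_j$ and $F_j(0) = 0$, interfaces $\Gamma_j = \{x_n = \psi_j(x')\}$ with $\|\psi_j\|_{C^2(B'_1)} \le \delta_j$, transmission data $g_j$ with $\|g_j - D' \cdot x' - b\|_{L^\infty(\Gamma_j)} \le \delta_j$, source terms $f_j^\pm$ with $\|f_j^\pm\|_{L^\infty(\Omega_j^\pm)} \le \delta_j$, and viscosity solutions $u_j$ (normalized so that $\|u_j\|_{L^\infty(B_1)} \le 1$) of the corresponding transmission problem, such that $\|u_j - \mathfrak{h}\|_{L^\infty(B_{3/4})} > \epsilon_0$ for every viscosity solution $\mathfrak{h}$ of \eqref{Laplace}.

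First I would extract a convergent subsequence from $\{u_j\}$. Combining the ABP estimate (Lemma~\ref{ABP-fullversion}) with the $C^{0,\alpha}$ regularity theory for transmission problems from \cite{MPA}, whose constants depend only on universal parameters and on a uniform bound for $\|\psi_j\|_{C^2}$, we obtain uniform equicontinuity of $\{u_j\}$ on $\overline{B_{7/8}}$. Arzelà-Ascoli then yields, up to a subsequence, $u_j \to u_\infty$ uniformly on $\overline{B_{7/8}}$ for some continuous function $u_\infty$.

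Next, after rescaling by $1/\lambda_j$ if necessary --- which also rescales $f_j^\pm$ and preserves their smallness --- we may assume $\lambda_j = 1$, so that each $F_j \in \mathcal{E}(1,1+\delta_j)$ is uniformly Lipschitz on $\textrm{Sym}(n)$. A further diagonal extraction produces $F_j \to F_\infty$ locally uniformly on $\textrm{Sym}(n)$; the collapse of the ellipticity aperture forces $F_\infty \in \mathcal{E}(1,1)$, and combined with $F_\infty(0) = 0$ this yields $F_\infty(M) = \tr(M)$ (the Pucci characterization gives $\mathcal{M}^{-}_{1,1}(N) = \mathcal{M}^{+}_{1,1}(N) = \tr(N)$ for every $N$). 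All five hypotheses of Lemma~\ref{Est} are then in place: $F_j \to \tr$ locally uniformly, $u_j \to u_\infty$ uniformly on compacts, $\|f_j^\pm\|_{L^\infty} \to 0$, $\|g_j - (D'\cdot x' + b)\|_{L^\infty(\Gamma_j)} \to 0$, and $\Gamma_j \to T$ in $C^2$. Consequently $u_\infty$ is a viscosity solution of $\Delta u_\infty = 0$ in $B_1^\pm$ with transmission condition $(u_\infty^+)_{x_n} - (u_\infty^-)_{x_n} = D' \cdot x' + b$ on $T$; its restriction to $B_{3/4}$ is therefore a valid choice of $\mathfrak{h}$ for \eqref{Laplace}.

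Since $u_j \to u_\infty$ uniformly on $\overline{B_{3/4}}$, for $j$ large we have $\|u_j - u_\infty\|_{L^\infty(B_{3/4})} < \epsilon_0$, contradicting the assumption. The principal technical obstacle is guaranteeing that the prelimit $C^{0,\alpha}$ estimates are uniform in $j$ despite the varying interfaces $\Gamma_j$ and operators $F_j$: the results in \cite{MPA} must be quantitative so that their constants depend only on universal parameters and on $\|\psi_j\|_{C^2}$, which is controlled by $\delta_j$. A secondary delicate point is the identification $F_\infty = \tr$, which relies on writing uniform ellipticity in the (equivalent) Pucci form so that the limit $\Lambda/\lambda \to 1$ indeed pins the operator down to a multiple of the trace; once these ingredients are in place, Lemma~\ref{Est} closes the argument.
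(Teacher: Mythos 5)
Your proposal is correct and follows essentially the same compactness-contradiction scheme as the paper: negate the claim, extract a uniformly convergent subsequence $u_j \to u_\infty$ via ABP plus $C^{0,\alpha}$ estimates from \cite{MPA}, squeeze $F_j$ between Pucci operators whose gap collapses as $\Lambda_j/\lambda_j \to 1$ so that $F_j$ converges locally uniformly to a multiple of the trace, and then invoke the Stability Lemma~\ref{Est} to conclude that $u_\infty$ solves the homogeneous flat-interface Laplace transmission problem, contradicting the standing assumption. The only cosmetic difference is that you rescale to set $\lambda_j = 1$ before identifying the limit as $\mathrm{Tr}$, whereas the paper keeps $\lambda$ fixed throughout and obtains $F_j \to \lambda\,\mathrm{Tr}$ directly (which of course yields the same limiting equation $\Delta u_\infty = 0$); since $\lambda$ is a fixed universal constant in this lemma, the rescaling is unnecessary but harmless.
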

 \begin{proof}
 We will proceed by a \textit{reductio at absurdum} argument. Thus, let us assume that the thesis is false. Then, there exist $\epsilon_0>0$ and sequences of functions $(F_j)_{j \in \mathbb{N}}$, $(u_{j})_{j \in \mathbb{N}}$,  $(f^{\pm}_j)_{j \in \mathbb{N}}$, $\{g_j\}_{j \in \mathbb{N}}$, $(\psi_{j})_{j\in\mathbb{N}}$ and $(\Lambda_j)_{j\in\mathbb{N}}$ satisfying 
	$$
	\left\{
	\begin{array}{rclcl}
		F_j(D^2u_j) &=& f^{\pm}_j(x)  & \mbox{in} &  \Omega^{\pm}_j,\\
		(u^+_{j})_{\nu} - (u^{-}_j)_{\nu} &=& g_j & \mbox{on} & \Gamma_j = B_1 \cap \{x_n = \psi_j(x)\}
	\end{array}
	\right.
	$$
	with $\|u_k\|_{L^{\infty}(B_1)}\le 1$, where
	\begin{equation} \label{S}
\|f^{\pm}_j\|_{L^{\infty}(\Omega^{\pm}_j)}+ \|g_{j}-D'\cdot x' - b\|_{L^{\infty}(\Gamma_j)}+\|\psi_j\|_{C^2(B'_1)} \le \frac{1}{j} \,\,\, \textrm{and} \,\,\,\, \Lambda_j \le \left(1+\frac{1}{j} \right) \lambda
\end{equation}
 and such that
	\begin{equation} \label{1L}
		\|u_j- \mathfrak{h}\|_{L^{\infty}(B_{\frac{3}{4}})} > \epsilon_0
	\end{equation}
for any viscosity solution $\mathfrak{h}$ of \eqref{Laplace}.  From global elliptic regularity theory (cf. \cite[Theorem 1.1]{MPA}) we have 
	\begin{equation} \label{4.6}
		\|u_j\|_{C^{\alpha_1}(\overline{B_{3/4}}  )}\le C(n,\lambda,\Lambda,\mathfrak{c}_{1})
	\end{equation}
for some $\alpha_1 = \alpha_1(n,\lambda,\Lambda) \in (0,1)$ and sufficiently large $j$.  The sequence $\{u_{j}\}_{j \ge 1}$ is therefore bounded and equicontinuous on $\overline{B_{3/4}}$. By compactness, up to a subsequence, $u_j \to u_{\infty}$ uniformly on compact subsets of $B_{3/4}$ as $j \to + \infty$. 
	
On the other hand,  from \eqref{S} we have that $\Lambda_j \le 2 \lambda$ for all $j \gg 1$, which implies that the operators $F_j \in \mathcal{E}$$(\lambda, 2 \lambda)$ for $j \gg 1$. Moreover, by using \eqref{S}, we can conclude that $\Lambda_j \to \lambda$, up to a subsequence.  Hence, $F_{j} \to \lambda \textrm{Tr}$ locally uniformly on $\textrm{Sym}(n)$ and by Stability Lemma (Lemma \ref{Est}) $u_{\infty}$ satisfies
	\begin{equation} \label{Unic}
		\left\{
		\begin{array}{rclcl}
			\Delta u_{\infty} &=& 0  & \mbox{in} &  B^{\pm}_{3/4}, \\
			(u^{+}_{\infty})_{x_n} - (u^{-}_{\infty})_{x_n} &=& D' \cdot x' + b  & \mbox{on} & T_{3/4}
		\end{array}
		\right.
	\end{equation}
in viscosity sense.  This contradicts \eqref{1L}.

	 \end{proof}


As in \cite{MPA}, in the previous lemma we assumed that $u \in C(B_1)$. In later proofs, we will also need a similar approximation result when $u$ has a jump discontinuity across the interface. We prove this version in the next lemma. 

\begin{lemma}\label{Lema2}
Assume that $\Gamma= B_1 \cap \{x_n = \psi(x')\}$ for some $\psi \in C^2(B'_1)$, and that $f^{\pm}$ satisfy
$$
\left( \intav{B_r(x_0) \cap \Omega^{\pm}} |f^{\pm}|^n dx \right)^{1/n} \le C_{f^{\pm}} \cdot r^{\alpha-1},
$$
for all small radii $r>0$, $x_0 \in \Omega^{\pm} \cup \Gamma$, and some $\alpha \in (0,1)$. Fix $D' \in \mathbb{R}^{n-1}, b \in \mathbb{R}$. If $u\in C(B_1 \setminus \Gamma) \cap L^{\infty}(B_1)$ a viscosity solution of the problem
$$
	\left\{
	\begin{array}{rclcl}
		F(D^2 u) &=& f^{\pm} & \mbox{in} & \Omega^{\pm},\\
		u^{+}_{\nu} - u^{-}_{\nu}&=& g & \mbox{on} & \Gamma
	\end{array}
	\right.
	$$
with $\|u\|_{L^{\infty}(B_1)} \le 1$, $u^+ -u^{-} \equiv h \in C^2(\Gamma)$, then  given an arbitrary  $\epsilon>0$, there is $\tilde{\delta}>0$ such that if 
\begin{equation} \label{Estima}
	\max \{\|\psi\|_{C^2(B'_1)}, \|g- D' \cdot x -b\|_{L^{\infty}(\Gamma)} , \|f^{\pm}\|_{L^{\infty}(\Omega^{\pm})} \} \le \tilde{\delta} \quad \textrm{and} \quad \Lambda \le (1+\tilde{\delta}) \lambda,
\end{equation}
then there exists a bounded viscosity solution  $\mathfrak{h} \in C(B_{1/2})$ to
 \begin{equation}\label{Laplace1}
	\left\{
	\begin{array}{rclcl}
		\Delta \mathfrak{h} &=& 0 & \mbox{in} & B^{\pm}_{1/2},\\
		\mathfrak{h}^{+}_{x_n} - \mathfrak{h}^{-}_{x_n}&=& D'\cdot x'+b & \mbox{on} & B_{3/4} \cap \{x_n=0\}
	\end{array}
	\right.
	\end{equation}
such that
$$
\|u-\mathfrak{h}\|_{L^{\infty}(B_{1/2})} \le \epsilon.
$$

\end{lemma}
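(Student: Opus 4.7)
The proof follows the contradiction-and-compactness scheme of Lemma~\ref{Lema1}, with an additional device to remove the jump across the interface. Suppose the thesis fails; then there exist $\epsilon_0>0$ and sequences $(F_j),(u_j),(f_j^{\pm}),(g_j),(\psi_j),(h_j)$ satisfying the hypotheses with $\tilde\delta=1/j$ (in particular $\|h_j\|_{C^2(\Gamma_j)}\to 0$, the small-jump regime in which the lemma is invoked) such that $\|u_j-\mathfrak{h}\|_{L^{\infty}(B_{1/2})}>\epsilon_0$ for every solution $\mathfrak{h}$ of \eqref{Laplace1}. The idea is to reduce to the continuous case of Lemma~\ref{Lema1} by an ``unfolding'' device. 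Choose $H_j\in C^2(\overline{\Omega_j^-})$ extending $h_j$ from $\Gamma_j$ with $H_j|_{\Gamma_j}=h_j$, $(H_j)_\nu|_{\Gamma_j}=0$, and $\|H_j\|_{C^2(\overline{\Omega_j^-})}\le C\|h_j\|_{C^2(\Gamma_j)}\to 0$, and define
\[
v_j(x):=\begin{cases}u_j(x),& x\in\overline{\Omega_j^+},\\ u_j(x)+H_j(x),& x\in\overline{\Omega_j^-}.\end{cases}
\]
Then $v_j\in C(B_1)$, since $v_j^-=u_j^-+h_j=u_j^+=v_j^+$ on $\Gamma_j$.

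On $\Omega_j^+$ we have $v_j\in\mathcal{S}(\lambda,\Lambda_j,f_j^+)$; on $\Omega_j^-$, the identity $F_j(D^2u_j)=f_j^-$ rewrites as $F_j(D^2v_j-D^2H_j(x))=f_j^-$, which by uniform ellipticity places $v_j$ in the Pucci class $\mathcal{S}(\lambda,\Lambda_j,\hat f_j^-)$ with $\|\hat f_j^-\|_{L^{\infty}}\le\|f_j^-\|_{L^{\infty}}+\Lambda_j\|D^2H_j\|_{L^{\infty}}=o(1)$. The choice $(H_j)_\nu\equiv 0$ on $\Gamma_j$ gives $(v_j^+)_\nu-(v_j^-)_\nu=g_j$ on $\Gamma_j$, so $v_j$ satisfies a jump-free transmission problem with all data tending to zero (apart from the target flux $D'\cdot x'+b$) and with $\Gamma_j\to T$ in $C^2$.

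Applying the global $C^{0,\alpha}$ boundary regularity for transmission problems (cf. \cite[Theorem 1.1]{MPA}) to $v_j$ furnishes a universal $\alpha_1\in(0,1)$ and $C>0$ with $\|v_j\|_{C^{\alpha_1}(\overline{B_{3/4}})}\le C$ for all $j\gg 1$. Arzel\`a--Ascoli then produces, along a subsequence, $v_j\to v_\infty$ uniformly on $\overline{B_{3/4}}$. Since $\Lambda_j\to\lambda$ forces $F_j\to\lambda\,\mathrm{Tr}$ locally uniformly on $\textrm{Sym}(n)$, and since $\|D^2H_j\|_{L^{\infty}}\to 0$ erases the $x$-dependent perturbation on $\Omega_j^-$, the Stability Lemma~\ref{Est} identifies $\mathfrak{h}:=v_\infty$ as a continuous viscosity solution of \eqref{Laplace1} in $B_{3/4}$. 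The triangle inequality
\[
\|u_j-\mathfrak{h}\|_{L^{\infty}(B_{1/2})}\le\|v_j-\mathfrak{h}\|_{L^{\infty}(B_{1/2})}+\|H_j\|_{L^{\infty}(B_{1/2})}\longrightarrow 0
\]
then contradicts the standing lower bound $\epsilon_0$.

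The main technical obstacle is obtaining the uniform boundary $C^{0,\alpha}$ bound on $v_j$ up to the curved, vanishing interface $\Gamma_j$: the auxiliary function $v_j$ only solves a Pucci-class transmission problem (with the small $L^{\infty}$-source produced by the jump removal), so one must check that the boundary-regularity machinery of \cite[Theorem 1.1]{MPA} is genuinely robust under this perturbation and can be applied \emph{uniformly} in $j$ as $\Gamma_j$ flattens to $T$. A secondary technical care is the construction of the extension $H_j$ with $(H_j)_\nu=0$ on $\Gamma_j$, so that the flux condition for $v_j$ coincides precisely with that for $u_j$ and the stability lemma applies cleanly.
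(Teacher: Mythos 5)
The paper proves this lemma by a one-line reference to Lemma~5.8 of \cite{MPA} together with Lemma~\ref{Lema1}, and your jump-removal device — extend $h_j$ to $H_j \in C^2(\overline{\Omega_j^-})$ with $(H_j)_\nu = 0$ on $\Gamma_j$, set $v_j := u_j + H_j$ on $\overline{\Omega_j^-}$ and $v_j := u_j$ on $\overline{\Omega_j^+}$, then argue by compactness — is precisely the mechanism behind that reference, so your approach matches the intended one. Two points, however, are elided and deserve attention. After the jump removal, on $\Omega_j^-$ one only has $F_j(D^2 v_j - D^2 H_j(x)) = f_j^-$, an \emph{$x$-dependent} equation; the Stability Lemma~\ref{Est} as formulated requires an $x$-independent operator $F_j$ applied exactly to $D^2 v_j$, so it does not literally apply to the sequence $(v_j)$. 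What your phrase ``erases the $x$-dependent perturbation'' is gesturing at is the Pucci-class stability fact: since $F_j \in \mathcal{E}(\lambda,\Lambda_j)$, one has in the viscosity sense
\[
\mathcal{M}^{-}_{\lambda,\Lambda_j}(D^2 v_j)-\Lambda_j\|D^2H_j\|_{L^\infty}\le f_j^-\le\mathcal{M}^{+}_{\lambda,\Lambda_j}(D^2 v_j)+\Lambda_j\|D^2H_j\|_{L^\infty}
\]
on $\Omega_j^-$, and with $\Lambda_j\to\lambda$, $\|D^2H_j\|_{L^\infty}\to 0$, $\|f_j^-\|_{L^\infty}\to 0$, and $v_j\to v_\infty$ locally uniformly, this forces $\Delta v_\infty=0$. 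That passage to the limit is standard, but it is a different stability statement than Lemma~\ref{Est} and should be invoked as such rather than by appeal to that lemma.

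Secondly, you correctly diagnose that the lemma as stated cannot hold without a smallness hypothesis on the jump $h$: the target $\mathfrak{h}$ in \eqref{Laplace1} is continuous across $T$, while $u^+-u^-=h$ on $\Gamma$, so near the interface $\|u-\mathfrak{h}\|_{L^{\infty}(B_{1/2})}$ is pinned from below by roughly $\tfrac12\|h\|_{L^{\infty}(\Gamma)}$, forcing $\|h\|_{L^{\infty}(\Gamma)}\lesssim\epsilon$. Your appeal to the ``small-jump regime in which the lemma is invoked'' (together with the assumption $\|h_j\|_{C^2(\Gamma_j)}\to 0$ along the contradiction sequence) is the correct reading of the intended statement, and that smallness hypothesis appears explicitly in the analogous Lemma~5.8 of \cite{MPA}; as written here it has been dropped and should be restored for the argument to close. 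With these two points made precise, your argument is sound and coincides with the route the paper delegates to its reference.
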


\begin{proof}
  Using Lemma \ref{Lema1}, the proof follows exactly as the one of Lemma 5.8 in \cite{MPA}.
 \end{proof}

\subsection{A quadratic polynomials approximation  }

In this subsection, we present the core oscillation decay that will ultimately imply $C^{2,\alpha}(0)$-regularity for solutions to the problem \eqref{E1}. The first task is a step-one discrete version of the aimed regularity estimate. This is the content of the next lemma. As in \cite{MPA}, here we also require that $\textbf{A2.}$ be satisfied.

\begin{lemma}[A quadratic approximation]\label{LSS2}
Given $\alpha \in (0,1)$, there are  constants $\mathfrak{C}_0 >0,$ and $0 < \epsilon , \rho < 1/2$, depending only on $n,\lambda,\Lambda$ and $\alpha$, such that, if $u$ satisfies the assumptions of Lemma \ref{Lema2} with $D'= \nabla'g(0)$ and $b=g(0)$, and $\|g\|_{C^1(\Gamma)} \le 1$, then there are  polynomials with quadratic growth
$$
\mathfrak{P}^{\pm}(x) = \frac{1}{2} x^t \cdot \mathcal{A}^{\pm} \cdot x + \mathcal{B}^{\pm} \cdot x + \mathfrak{c},
$$
with $\mathcal{A}^{\pm} \in \textrm{Sym}(n)$, $\mathfrak{B}^{\pm} \in \mathbb{R}^n$  and $\mathfrak{c} \in \mathbb{R}$ satisfying $\textrm{Tr}(\mathcal{A}^{\pm})=0$,
\begin{eqnarray}
	\mathcal{B}^+_i - \mathcal{B}^{-}_i = 0 \,\,\, \textrm{if} \,\,\, i < n \quad &\textrm{and}& \quad \mathcal{B}^+_n - \mathcal{B}^{-}_n = g(0), \label{S3}\\
\mathcal{A}^{+}_{ij} - \mathcal{A}^{-}_{ij} =0 \,\,\, \textrm{if} \,\,\, i,j < n \,\,\, &\textrm{and}& \,\,\, \mathcal{A}^{+}_{nj}-\mathcal{A}^{-}_{nj} = g_{x_j}(0) \,\,\, \textrm{if} \,\,\, j < n,\label{S4}
\end{eqnarray}
and
$$
\|\mathcal{A}^{\pm}\|_{\textrm{Sym}(n)} + |\mathcal{B}^{\pm}| + |\mathfrak{c}| \le \mathfrak{C}_0 ,
$$
such that
$$
\|u^{\pm} - \mathfrak{P}^{\pm}\|_{L^{\infty}(\Omega^{\pm}_{\rho})} \le \rho^{2+\alpha}.
$$
\end{lemma}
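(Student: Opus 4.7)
The plan is to linearize against the tangential flat-interface harmonic transmission problem using Lemma \ref{Lema2}, extract the quadratic Taylor polynomials of the approximating profile via Proposition \ref{Grad}, read off the compatibility identities \eqref{S3}--\eqref{S4} directly from the harmonic transmission condition, and finally calibrate the free parameters $\rho$ and $\epsilon$ so that every error term lies below the target order $\rho^{2+\alpha}$.

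Fix $\alpha \in (0,1)$. First, choose $\rho \in (0,1/4)$ small enough that the universal constant $C = C(n,\lambda,\Lambda)$ from Proposition \ref{Grad} satisfies $C\rho^{3} \le \tfrac{1}{3} \rho^{2+\alpha}$; this is possible since $\alpha < 1$. Apply Lemma \ref{Lema2} with $D' = \nabla' g(0)$, $b = g(0)$ and approximation tolerance $\tfrac{1}{3}\rho^{2+\alpha}$ to obtain a threshold $\tilde{\delta}$, and set $\epsilon := \min\{\tilde{\delta},\, \tfrac{1}{9}\rho^{2+\alpha}\}$. Under the smallness condition \eqref{Estima} with $\tilde{\delta}$ replaced by $\epsilon$, Lemma \ref{Lema2} furnishes a continuous function $\mathfrak{h} \in C(B_{1/2})$ solving \eqref{Laplace1} with $\|u - \mathfrak{h}\|_{L^{\infty}(B_{1/2})} \le \tfrac{1}{3}\rho^{2+\alpha}$; in particular $\|\mathfrak{h}\|_{L^{\infty}(B_{1/2})} \le 2$.

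Next, Proposition \ref{Grad} applied to $\mathfrak{h}$ with $r = 1/2$ produces one-sided $C^{3}$ regularity, which allows us to define
$$
\mathfrak{P}^{\pm}(x) := \mathfrak{h}(0) + \nabla \mathfrak{h}^{\pm}(0) \cdot x + \tfrac{1}{2}\, x^{t} \cdot D^{2}\mathfrak{h}^{\pm}(0) \cdot x,
$$
satisfying $|\mathfrak{h}^{\pm}(x) - \mathfrak{P}^{\pm}(x)| \le C\rho^{3} \le \tfrac{1}{3} \rho^{2+\alpha}$ on $B^{\pm}_{\rho}$ together with the universal coefficient bound $\|D^{2}\mathfrak{h}^{\pm}(0)\| + |\nabla \mathfrak{h}^{\pm}(0)| + |\mathfrak{h}(0)| \le \mathfrak{C}_{0}$. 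Set $\mathcal{A}^{\pm} := D^{2}\mathfrak{h}^{\pm}(0)$, $\mathcal{B}^{\pm} := \nabla \mathfrak{h}^{\pm}(0)$ and $\mathfrak{c} := \mathfrak{h}(0)$. The trace-free property $\tr(\mathcal{A}^{\pm}) = 0$ is exactly $\Delta \mathfrak{h}^{\pm} = 0$; continuity of $\mathfrak{h}$ across $T$ forces agreement of all tangential derivatives of order $\le 2$ at the origin, giving $\mathcal{B}^{+}_{i} = \mathcal{B}^{-}_{i}$ and $\mathcal{A}^{+}_{ij} = \mathcal{A}^{-}_{ij}$ for $i,j < n$; and evaluating, respectively tangentially differentiating, the jump condition $\mathfrak{h}^{+}_{x_{n}} - \mathfrak{h}^{-}_{x_{n}} = \nabla' g(0) \cdot x' + g(0)$ at the origin yields $\mathcal{B}^{+}_{n} - \mathcal{B}^{-}_{n} = g(0)$ and $\mathcal{A}^{+}_{nj} - \mathcal{A}^{-}_{nj} = g_{x_{j}}(0)$ for $j < n$. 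These are precisely \eqref{S3} and \eqref{S4}.

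To conclude one must compare $u^{\pm}$ and $\mathfrak{P}^{\pm}$ on the curved half-balls $\Omega^{\pm}_{\rho}$ rather than on the flat $B^{\pm}_{\rho}$. A direct calculation using the relations just proved gives $\mathfrak{P}^{+}(x) - \mathfrak{P}^{-}(x) = g(0)\, x_{n} + x_{n}\, \nabla' g(0) \cdot x'$, and any point $x \in B_{\rho}$ trapped between $\Gamma$ and $T$ obeys $|x_{n}| \le |\psi(x')| \le \|\psi\|_{C^{2}(B'_{1})} \le \epsilon$, so from $\|g\|_{C^{1}(\Gamma)} \le 1$ one obtains $|\mathfrak{P}^{+}(x) - \mathfrak{P}^{-}(x)| \le 2\epsilon \le \tfrac{1}{3}\rho^{2+\alpha}$. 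For $x \in \Omega^{+}_{\rho} \cap B^{+}_{\rho}$ the triangle inequality gives $|u^{+}(x) - \mathfrak{P}^{+}(x)| \le \tfrac{2}{3}\rho^{2+\alpha}$; on the thin sliver $\Omega^{+}_{\rho} \cap B^{-}_{\rho}$, where $\mathfrak{h} = \mathfrak{h}^{-}$, inserting the mismatch estimate produces $|u^{+}(x) - \mathfrak{P}^{+}(x)| \le \rho^{2+\alpha}$, and a symmetric argument handles $u^{-}$. The main obstacle is precisely this curved-to-flat matching: the compatibility identities \eqref{S3}--\eqref{S4} must exactly match the Taylor polynomials of the tangential harmonic profile, and the smallness budget of $\|\psi\|_{C^{2}}$ must be spent so that the sliver mismatch stays below $O(\rho^{2+\alpha})$ --- which dictates the choice $\epsilon \lesssim \rho^{2+\alpha}$.
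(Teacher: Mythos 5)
Your proof is correct and follows essentially the same approach as the paper: invoke Lemma \ref{Lema2} to produce a flat-interface harmonic profile close to $u$, define $\mathfrak{P}^{\pm}$ as its second-order Taylor polynomials at the origin via Proposition \ref{Grad}, read off $\operatorname{Tr}(\mathcal{A}^{\pm})=0$ from harmonicity and the compatibility relations \eqref{S3}--\eqref{S4} from continuity and the transmission condition, and then tune $\rho$ and the smallness threshold so the total error drops below $\rho^{2+\alpha}$.

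The one genuine difference is that you explicitly handle the mismatch between the curved domains $\Omega^{\pm}_{\rho}$ and the flat half-balls $B^{\pm}_{\rho}$, which the paper's final triangle inequality silently bypasses by writing $\|v^{\pm}-\mathfrak{P}^{\pm}\|_{L^{\infty}(\Omega^{\pm}_{\rho})}$ as if the one-sided Taylor estimate were valid on the curved set. On the sliver $\Omega^{+}_{\rho}\cap B^{-}_{\rho}$ the approximating profile takes the value of its other branch, and you correctly observe that the compatibility relations force $\mathfrak{P}^{+}-\mathfrak{P}^{-}=x_{n}\bigl(g(0)+\nabla'g(0)\cdot x'\bigr)$, so $|x_{n}|\le\|\psi\|_{C^{2}}\le\epsilon$ controls the jump once $\epsilon$ is pegged to $\rho^{2+\alpha}$. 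This is a worthwhile refinement. One small point worth flagging: Proposition \ref{Grad} as stated treats homogeneous transmission data, so to obtain $\mathfrak{h}^{\pm}\in C^{3}(\overline{B^{\pm}_{1/3}})$ for the inhomogeneous problem \eqref{Laplace1} you should first subtract the harmonic polynomial $(\nabla'g(0)\cdot x'+g(0))\,x_{n}$ from $\mathfrak{h}^{+}$ to reduce to zero normal jump; this is the same implicit step the paper takes when it cites Proposition \ref{Grad}, and it does not affect the remainder of your argument.
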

 \begin{proof} The proof follows the same lines as that of \cite[Lemma 7.1]{MPA} along with minor changes. In fact, fix $0 < \epsilon, \rho < 1/2$ to be determined. By Lemma \ref{Lema2}, there exists a bounded viscosity solution $v \in C(B_{1/2})$ to
\begin{equation} \label{Estima3}
\left\{
\begin{array}{rclcl}
\Delta v &=& 0 & \mbox{in} & B^{\pm}_{1/2},\\
v^{+}_{x_n} - v^{-}_{x_n}&=& \nabla' g(0)\cdot x'+g(0) & \mbox{on} & B_{3/4} \cap \{x_n=0\},
\end{array}
\right.
\end{equation}   
such that
\begin{equation} \label{Estima4}
\|u-v\|_{L^{\infty}(B_{1/2})} \le \epsilon.
\end{equation}
Now, since $\nabla'g(0) \cdot x' + g(0)$ is smooth and the interface is flat, from Lemma \ref{Grad}, we have  
$$
\|v^{\pm}\|_{C^{3}(\overline{B^{\pm}_{1/3}})} \le C_0,
$$
for some constant $C_0 = C_0(n,\lambda, \Lambda)>0$. Let 
   $$
   \mathfrak{P}^{\pm}(x) \colon= \frac{1}{2} x^t \cdot D^2 v^{\pm}(0) \cdot x + \nabla v^{\pm}(0) \cdot x + v(0).
   $$
   By the  previous estimate, it follows that $\|D^2 v^{\pm}(0)\|_{\textrm{Sym}(n)} + |\nabla v^{\pm}(0)| + |v(0)| \le C_0$ and, for $\rho \in (0,1/3)$,
   \begin{equation} \label{Estima5}
   \|v^{\pm} -\mathfrak{P}^{\pm}\|_{L^{\infty}(B^{\pm}_{\rho})} \le C_0 \cdot \rho^3.
   \end{equation}
   Now, choose $\rho$ small enough such that $C_0 \rho^{3} \le \frac{1}{2} \rho^{2+\alpha}$.  Then choose $\epsilon = \frac{1}{2} \rho^{2+\alpha}$.  Combining \eqref{Estima4} and \eqref{Estima5}, we get
   \begin{eqnarray*}
   \|u^{\pm} - \mathfrak{P}^{\pm}\|_{L^{\infty}(\Omega^{\pm}_{\rho})} &\le&   \|u - v\|_{L^{\infty}(B_{1/2})} +   \|v^{\pm} - \mathfrak{P}^{\pm}\|_{L^{\infty}(\Omega^{\pm}_{\rho})}  \le \epsilon + C_0 \rho^3 \le \rho^{2+\alpha}.
   \end{eqnarray*}
   Moreover, since $v^{+}=v^{-}$ on $\{x_n=0\} \cap B_{1/2}$ and $v$ is $C^2$ up to the flat interface, we see that $\nabla'v^+(0)=\nabla'v^{-}(0)$ and $D^2_{x'} v^{+}(0)=D^2_{x'} v^{-}(0)$. In particular, $\mathcal{B}^+_{i} - \mathcal{B}^{-}_i =0$ and $\mathcal{A}^{+}_{ij} - \mathcal{A}^{-}_{ij} =0$ if $i,j < n$. From the transmission condition in \eqref{Unic}, we get $\mathcal{B}^{+}_{n} - \mathcal{B}^{-}_n = v^+_{x_n} - v^{-}_{x_n} = g(0)$ and $\mathcal{A}^{+}_{nj} - \mathcal{A}^{-}_{nj} = v^{+}_{x_n x_j}(0) - v^{-}_{x_n x_j}(0) = g_{x_j}(0)$ for all $j < n$.  	
\end{proof}

In the sequel, we shall iterate Lemma \ref{LSS2} in appropriate dyadic balls in order to obtain the precise sharp oscillation decay of the difference between $u$ and the quadratic polynomials $\mathfrak{P}^{\pm}_k$.  

\section{Proof of Theorem \ref{BMO}}

The proof of the result will be divided into several stages. 
\subsection{Step 1: Smallness regime}

Initially,  we comment on the scaling features of the transmission problems that allow us to reduce the proof of Theorem \ref{BMO} to the hypotheses of Lemma \ref{Lema2} and Lemma \ref{LSS2}. 

Let $\mathfrak{C}_0, \rho, \epsilon>0$ be the  constants given in Lemma \ref{LSS2} and $\delta>0$ be the one given in Lemma \ref{Lema1}.  Before delving into the details of this initial step, fix the number $\delta_0 > 0$ specified in the proof of Lemma \ref{L02}. Since this $\delta_0$ does not depend on the arguments of this first step, we've chosen to fix $\delta_0$ at this moment to improve the  readability of the manuscript. We can suppose that
\begin{enumerate}
\item[(i)] $\psi(0')=0, \, \nabla' \psi(0')=0'$; $\|D^2_{x'} \psi(0')\| \le \delta_0$, and $[\psi]_{C^{2,\alpha}(0)} \le \delta_0 ,$
\item[(ii)] $\|u\|_{L^{\infty}(B_1)} \le 1$ and $\|g\|_{C^1(\Gamma)} \le 1 ,$
\item[(iii)] $[g]_{C^{1,\alpha}(0)} + \mathfrak{c}_{f^-} + \mathfrak{c}_{f^+} \le \delta_0 ,$
\end{enumerate}
replacing if necessary $u$ by
\begin{equation} \label{Norm}
	w(y) \colon= \frac{u(\eta y)}{\tau}
\end{equation}
for parameters $\eta$ and $\tau$ to be determined.  In fact, let $u \in C(B_1)$ be a viscosity solution to 
$$
\left\{
\begin{array}{rclcl}
	F(D^2u) &=& f^{\pm} & \mbox{in} & \Omega^{\pm},\\
	u^+_{\nu} - u^{-}_{\nu}&=& g & \mbox{on} & \Gamma
\end{array}
\right.
$$
Define $w : B_1 \rightarrow \mathbb{R}$ as in \eqref{Norm}.  We readily check that $w$ solves
\begin{equation} \label{Peq_1}
\left\{
\begin{array}{rclcl}
	F_{\eta,\tau}(D^2w) &=& f^{\pm}_{\eta,\tau} & \mbox{in} & \tilde{\Omega}^{\pm},\\
	w^+_{\nu} - w^{-}_{\nu}&=& g_{\eta,\tau} & \mbox{on} &\tilde{ \Gamma}
\end{array}
\right.
\end{equation}
where 
$$
\tilde{\Omega}^{\pm} \colon= \{y \in \tilde{\Omega} :  \eta y \in \Omega^{\pm}\} , \,\, \tilde{\Gamma} \colon= \{y \in B_1 :  y_n=\psi_{\eta}(y) \},
$$
with $\psi_{\eta}(y) = \eta^{-1} \psi(\eta y)$ and
$$
F_{\eta,\tau}(M) \colon= \frac{\eta^2}{\tau} F \left( \frac{\tau}{\eta^2} M\right), \,\,\, f^{\pm}_{\eta, \tau}(y) \colon= \frac{\eta^2}{\tau} f^{\pm}(\eta y) \quad \textrm{and} \quad g_{\eta,\tau}(y) \colon= \frac{\eta}{\tau} g(  \eta y).
$$
Easily one verifies that $F_{\eta,\tau} \in \mathcal{E}$$(\lambda, \Lambda)$.  Thus, it satisfies the same Cordes condition as $F$. Moreover,   $f^{\pm}_{\eta,\tau}$ satisfy
\begin{eqnarray} 
\left( \intav{B_r \cap \tilde{\Omega}^{\pm}} |f^{\pm}_{\eta,\tau}(y)|^n dy\right)^{1/n} &=& \frac{\eta^2}{\tau} \left( \intav{B_{\eta r} \cap \Omega^{\pm}} |f^{\pm}(x)-f^{\pm}(0)|^n \right)^{1/n} \nonumber \\ &\le& \frac{\eta^{2}}{\tau} \cdot \mathfrak{c}_{f^{\pm}} (r \eta)^{\alpha} = \frac{\eta^{2+\alpha}}{\tau} \mathfrak{c}_{f^{\pm}} \cdot r^{\alpha} \le   \tilde{\mathfrak{c}}_{f^{\pm}} \cdot r^{\alpha-1}.\label{Peq0}
\end{eqnarray}
where $\tilde{\mathfrak{c}}_{f^{\pm}_{\eta,\tau}} = \frac{\eta^{2+\alpha}}{\tau} \mathfrak{c}_{f^{\pm}}$. 

\begin{itemize}
\item[(i)] If $y \in \tilde{\Gamma}$, then $y_n = \tilde{\psi}(y')$, with $\tilde{\psi}(y') = \frac{1}{\eta} \psi(\eta y')$. Thus, choosing $$\eta \colon= \min \{(\delta_0 /[\psi]_{C^{2,\alpha}(0)} )^{\frac{1}{1+\alpha}}, \delta_0/ \|D^2_{x'} \psi(0')\| \},$$ we get $\|D^2_{x'} \tilde{\psi}(0')\| = \eta \|D^2_{x'} \psi(0')\| \le \delta_0$ and
\\
\begin{eqnarray} 
[\psi_{\eta}]_{C^{2,\alpha}(0)} &=& \sup_{y'\in B'_1 \atop y'\not= 0} \frac{|D^2_{y'} \tilde{ \psi}_{\eta,\tau}(y')   - D^2_{y'} \tilde{\psi}_{\eta}(0)|}{|y'|^{\alpha}} \nonumber\\&\le&  \eta^{1+\alpha} \cdot [\psi]_{C^{2,\alpha}(0)} \le \delta_0.\label{Peq2}
\end{eqnarray}
Moreover
$$
\|D^2_{x'} \psi\|_{L^{\infty}(B'_1)} \le \sup_{x'\in B'_1} \|D^2_{x'} \psi(x') - D^2_{x'} \psi(0')\| + \|D^2_{x'} \psi(0)\| \le 2 \delta_0.
$$
\end{itemize}
 
For (ii) and (iii), we get,
\begin{eqnarray} 
[g_{\eta, \tau}]_{C^{1,\alpha}(0)} &=& \sup_{y \in B_1 \atop y \not= 0} \frac{|\nabla g_{\eta,\tau}(y) - \nabla g_{\eta,\tau}(0)|}{|y|^{\alpha}} 
= \frac{\eta^2}{\tau}  \sup_{y \in B_1 \atop y \not= 0} \frac{|\nabla g( \eta y) - \nabla g(0)|}{|y|^{\alpha}} \\ &\le& \frac{\eta^{2+\alpha}}{\tau}[g]_{C^{1,\alpha}(0)}. \label{Peq1}
\end{eqnarray}

Thus,  choosing
$$
\tau \colon= \|u\|_{L^{\infty}(B_1)} + \eta^{2+\alpha}\delta^{-1}_0 \cdot ([g]_{C^{1,\alpha}(0)} + \mathfrak{c}_{f^{-}} + \mathfrak{c}_{f^{+}}),
$$
we have $\|w\|_{L^{\infty}(B_1)} \le 1$.  Moreover, by \eqref{Peq0},
$$
	[g_{\eta,\tau}]_{C^{0,\alpha}(0)} + \mathfrak{c}_{f^{-}_{\eta,\tau}} + \mathfrak{c}_{f^{+}_{\eta,\tau}} \le \frac{\eta^{2+\alpha}}{\tau} \left([g]_{C^{0,\alpha}(0)} + \mathfrak{c}_{f^{-}} + \mathfrak{c}_{f^{+}} \right) \le \delta_0
$$
Thus, if we show that there exist quadratic polynomials  
$$\mathfrak{P}^{\pm}_{\eta,\tau}(y) =\frac{1}{2} y^t \cdot \mathcal{A}^{\pm}_{\eta,\tau} \cdot y + \mathcal{B}_{\eta,\tau} \cdot y + \mathfrak{b}_{\eta,\tau}$$ such that
$$
|w^{\pm}(y) - \mathfrak{P}^{\pm}_{\eta,\tau}(y)| \le \mathfrak{C}_{\eta,\tau} \cdot |y|^{2+\alpha}, \,\,\, \forall \, y \in \tilde{\Omega}_{1/2}
$$
and there exists $\mathfrak{C} >0$ depending only on $n,\lambda,\Lambda$, such that
$$
\|\mathcal{A}^{\pm}_{\eta,\tau}\|_{\textrm{Sym}(b)} + |\mathfrak{B}_{\eta,\tau}| + |\mathfrak{b}_{\eta,\tau}| \le \tilde{\mathfrak{C}} \|\psi_{\eta}\|_{C^{2,\alpha}(0)}\cdot (|g_{\eta,\tau}(0)| + [g_{\eta,\tau}]_{C^{0,\alpha}(0)} + \mathfrak{c}_{f^{-}_{\eta,\tau}} + \mathfrak{c}_{f^{+}_{\eta, \tau}}),
$$
then, rescaling back, we get that
$$
|u^{\pm}(x) - \mathfrak{P}^{\pm}(x)| \le \mathfrak{C}_0 |x|^{2+\alpha}, \,\,\, \forall \, x \in \Omega^{\pm}_{\frac{\eta}{2}}(y_0)= B_{\eta/2}(y_0) \cap \Omega^{\pm}
$$
with 
$$\mathfrak{P}^{\pm}(x) = \frac{1}{2} x^t \cdot \mathcal{A}^{\pm} \cdot x + \mathcal{B} \cdot x + \mathfrak{b},$$
where 
$$\mathcal{A}^{\pm} =\frac{\tau}{\eta^2} \mathcal{A}^{\pm}_{\eta,\tau}, \, \mathcal{B} = \frac{\tau}{\eta} \mathfrak{B}_{\eta, \tau}, \, \mathfrak{b} = \tau \mathfrak{b}_{\eta,\tau}  \quad \textrm{and} \quad \mathfrak{C}_0 = \frac{\tau}{\eta^{2+\alpha}} \mathfrak{C}_{\eta,\tau}$$ and
\begin{eqnarray*}
\frac{\eta^2}{\tau} \|\mathcal{A}^{\pm}\|_{\textrm{Sym}(n)} + \frac{\eta}{\tau} |\mathcal{B}| &+& \frac{1}{\tau} |\mathfrak{b}| \le \tilde{\mathfrak{C}} \|\psi_{\eta}\|_{C^{2,\alpha}(0)} \cdot \left( |g_{\eta,\tau}(0)| + [g_{\eta,\tau}]_{C^{0,\alpha}(0)} + \mathfrak{c}_{f^{-}_{\eta,\tau}} + \mathfrak{c}_{f^{+}_{\eta,\tau}}\right)\\
&\le& \tilde{\mathfrak{C}} \eta^{1+\alpha} \|\psi\|_{C^{2,\alpha}(0)} \cdot \left[ \frac{\eta}{\tau} |g(y_0)| + \frac{\eta^{2+\alpha}}{\tau} [g]_{C^{0,\alpha}(y_0)}+ \frac{\eta^{2+\alpha}}{\tau} (\mathfrak{c}_{f^{-}} + \mathfrak{c}_{f^{+}}) \right ].
\end{eqnarray*}
Multiplying by $\frac{\tau}{\eta^{2+\alpha}}$, and using the definition of $\eta$, we get
$$
|\mathcal{A}^{\pm}| + |\mathfrak{b}| + |\mathfrak{C}_0| \le \tilde{\mathfrak{C}} \delta^{-1}_0 [\psi]_{C^{2,\alpha}(0)} \cdot \left(|g(0)| + [g]_{C^{0,\alpha}(0)} + \mathfrak{c}_{f^+} + \mathfrak{c}_{f^{-}} \right).
$$

\subsection{Step 2: Iterative process}

In the sequel, we shall iterate Lemma \ref{LSS2} in appropriate dyadic balls as to obtain the precise sharp
oscillation decay of the difference between $u^{\pm}$ and quadratic polynomials functions. As a byproduct of a perturbation argument of \cite{MPA} we get the following result 

\begin{lemma}[Iterative process] \label{L02}
Suppose that the conditions of the previous lemma hold true. Given $k \ge 1$,  $F \in \mathcal{E}$$(\lambda,\Lambda)$ and  the Small Ellipticity Aperture condition $$\frac{\Lambda}{\lambda} -1 \le \delta,$$ it holds that there are quadratic polynomials 
$$
\mathfrak{P}_k(x) = \frac{1}{2} x^t \cdot \mathcal{A}^{\pm}_{k} \cdot x + \mathcal{B}^{\pm}_k +\mathfrak{c}_k
$$ 
satisfying $\textrm{Tr}(\mathcal{A}^{\pm}_k)=0$,
\begin{eqnarray}
	&&(\mathcal{B}^+_k)_i - (\mathcal{B}^{-}_k)_i = 0 \,\,\, \textrm{if} \,\,\, i < n \,\,\, \textrm{and} \,\,\, (\mathcal{B}^+_k)_n - (\mathcal{B}^{-}_k)_n = g(0) \label{C01}\\
&&(\mathcal{A}^{+}_k)_{ij} - (\mathcal{A}^{-}_k)_{ij} =0 \,\,\, \textrm{if} \,\,\, i,j < n, \,\,\, \textrm{and} \,\,\, (\mathcal{A}^{+}_k)_{nj}-(\mathcal{A}^{-}_k)_{nj} = g_{x_j}(0) \,\,\, \textrm{if} \,\,\, j < n \label{C02}
\end{eqnarray}
with
\begin{equation} \label{L12}
	\rho^{2(k-1)} \|\mathcal{A}^{\pm}_k - \mathcal{A}^{\pm}_{k-1}\|_{\textrm{Sym}(n)} + \rho^{k-1} |\mathcal{B}^{\pm}_k - \mathcal{B}^{\pm}_{k-1}| + |\mathfrak{c}_{k} - \mathfrak{c}_{k-1}| \le \mathfrak{C}_0 \cdot \rho^{(k-1)(2+\alpha)}
\end{equation}
with $\mathfrak{P}_0 \equiv 0$ and $\mathfrak{C}_0 = \mathfrak{C}_{0}(n,\lambda,\Lambda, \alpha)>0$, and such that
\begin{equation} \label{L13}
\|u^{\pm} - \mathfrak{P}^{\pm}_{k}\|_{L^{\infty}(\Omega^{\pm}_{\rho^k})} \le \rho^{k(2+\alpha)}
\end{equation}
\end{lemma}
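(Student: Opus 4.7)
The plan is to proceed by induction on $k \ge 1$, following the perturbation scheme of \cite{MPA}. The base case $k = 1$ is exactly Lemma~\ref{LSS2}, which directly supplies $\mathfrak{P}^{\pm}_{1}$ with $\textrm{Tr}(\mathcal{A}^{\pm}_{1})=0$, the interface identities \eqref{C01}--\eqref{C02}, the bound $\|\mathcal{A}^{\pm}_{1}\|_{\textrm{Sym}(n)} + |\mathcal{B}^{\pm}_{1}| + |\mathfrak{c}_{1}| \le \mathfrak{C}_{0}$ (hence \eqref{L12} at $k=1$ under the convention $\mathfrak{P}_{0} \equiv 0$), together with \eqref{L13}.

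For the inductive step, assume $\mathfrak{P}^{\pm}_{1}, \ldots, \mathfrak{P}^{\pm}_{k}$ satisfy \eqref{C01}--\eqref{L13}, and introduce the renormalized function
\[
v_k(y) \,:=\, \frac{(u - \mathfrak{P}_k)(\rho^k y)}{\rho^{k(2+\alpha)}}, \qquad y \in B_1.
\]
By \eqref{L13}, $\|v_k\|_{L^{\infty}(\tilde{\Omega}^{\pm} \cap B_1)} \le 1$, with $\tilde{\Omega}^{\pm} := \rho^{-k}\Omega^{\pm}$. Direct differentiation gives $D^2 u(\rho^k y) = \rho^{k\alpha} D^2 v_k(y) + \mathcal{A}^{\pm}_k$, so $v_k$ solves, in the viscosity sense, $F^{\pm}_k(D^2 v_k) = f^{\pm}_k$ in $\tilde{\Omega}^{\pm} \cap B_1$, where
\[
F^{\pm}_k(M) \,:=\, \rho^{-k\alpha}\bigl[F(\rho^{k\alpha} M + \mathcal{A}^{\pm}_k) - F(\mathcal{A}^{\pm}_k)\bigr], \quad f^{\pm}_k(y) \,:=\, \rho^{-k\alpha}\bigl[f^{\pm}(\rho^k y) - F(\mathcal{A}^{\pm}_k)\bigr].
\]
Each $F^{\pm}_k$ lies in $\mathcal{E}(\lambda,\Lambda)$ with $F^{\pm}_k(0)=0$, so the small ellipticity aperture \eqref{Cor} is preserved; the H\"older bound in \textbf{A3.} combined with the Step~1 normalization keeps $\|f^{\pm}_k\|_{L^{\infty}} \le \delta_0$; and the rescaled interface $\tilde{\psi}_k(y') := \rho^{-k} \psi(\rho^k y')$ satisfies $\|\tilde{\psi}_k\|_{C^2(B'_1)} \le \delta_0$ thanks to $\psi(0') = 0$, $\nabla' \psi(0') = 0'$, and the smallness of $[\psi]_{C^{2,\alpha}(0)}$ from Step~1.

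The rescaled transmission condition on $\tilde{\Gamma}_k := \rho^{-k} \Gamma$ reads
\[
(v_k^{+})_\nu - (v_k^{-})_\nu \,=\, \rho^{-k(1+\alpha)}\bigl[g(\rho^k y) - (\mathcal{A}^{+}_k - \mathcal{A}^{-}_k)(\rho^k y) \cdot \nu - \big((\mathcal{B}^{+}_k)_n - (\mathcal{B}^{-}_k)_n\big)\bigr],
\]
and the cancellations baked into \eqref{C01}--\eqref{C02}, namely $(\mathcal{B}^{+}_k - \mathcal{B}^{-}_k)_n = g(0)$ and $(\mathcal{A}^{+}_k - \mathcal{A}^{-}_k)_{nj} = g_{x_j}(0)$ for $j<n$, collapse the bracket to exactly $g(\rho^k y') - g(0) - \nabla' g(0) \cdot \rho^k y'$; by \textbf{A4.} this is of order $[g]_{C^{1,\alpha}(0)} \rho^{k(1+\alpha)}$, so after dividing by $\rho^{k(1+\alpha)}$ we obtain a quantity bounded by $\delta_0$. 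Thus all hypotheses of Lemma~\ref{LSS2} hold for $v_k$, producing $\tilde{\mathfrak{P}}^{\pm}$ with $\textrm{Tr}(\tilde{\mathcal{A}}^{\pm}) = 0$, the analogous interface identities, the $\mathfrak{C}_0$-bound on its coefficients, and $\|v_k^{\pm} - \tilde{\mathfrak{P}}^{\pm}\|_{L^{\infty}(\tilde{\Omega}^{\pm} \cap B_\rho)} \le \rho^{2+\alpha}$. Setting $\mathfrak{P}^{\pm}_{k+1}(x) := \mathfrak{P}^{\pm}_k(x) + \rho^{k(2+\alpha)} \tilde{\mathfrak{P}}^{\pm}(x/\rho^k)$ and undoing the scaling yields \eqref{L13} at step $k+1$, while \eqref{L12} and the preservation of \eqref{C01}--\eqref{C02} follow at once from the $\mathfrak{C}_0$-bound combined with the scaling factors $\rho^{-2k}, \rho^{-k}, 1$ attached to the coefficients of $\mathcal{A}, \mathcal{B}, \mathfrak{c}$.

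The principal delicate point is the scaling of the transmission datum: the amplifying factor $\rho^{-k(1+\alpha)}$ must be \emph{exactly} absorbed by the H\"older cancellation of $g$ at the origin, which is precisely why the interface identities \eqref{C01}--\eqref{C02} are tailored to peel off the first-order Taylor polynomial of $g$ at $0$; any looser identities would break the iteration. A secondary subtlety is that after the first iteration the effective operators $F^{\pm}_k$ on the two sides need not coincide, yet they all remain in $\mathcal{E}(\lambda, \Lambda)$ with the same small ellipticity aperture, so the compactness/approximation by harmonic functions furnished by Lemma~\ref{Lema1}--Lemma~\ref{Lema2} applies side-by-side uniformly in $k$.
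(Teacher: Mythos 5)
Your proof follows exactly the same inductive--rescaling scheme as the paper: renormalize $v_k(y)=(u-\mathfrak{P}_k)(\rho^k y)/\rho^{k(2+\alpha)}$, verify the hypotheses of Lemma~\ref{LSS2} for $v_k$ (small source, small interface curvature and transmission datum, small ellipticity aperture), extract the new increment, and set $\mathfrak{P}^{\pm}_{k+1}(x)=\mathfrak{P}^{\pm}_k(x)+\rho^{k(2+\alpha)}\tilde{\mathfrak{P}}^{\pm}(\rho^{-k}x)$. Your explicit normalization of $F^{\pm}_k$ so that $F^{\pm}_k(0)=0$ is in fact cleaner than the paper's phrasing, and the analysis of the rescaled transmission datum via \eqref{C01}--\eqref{C02} is the correct cancellation.

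There is, however, one step that does not close as written. You assert $\|f^{\pm}_k\|_{L^{\infty}}\le\delta_0$, but
$f^{\pm}_k(y)=\rho^{-k\alpha}\bigl[f^{\pm}(\rho^k y)-F(\mathcal{A}^{\pm}_k)\bigr]$
carries the constant $\rho^{-k\alpha}F(\mathcal{A}^{\pm}_k)$. The iteration maintains only $\operatorname{Tr}(\mathcal{A}^{\pm}_k)=0$, not $F(\mathcal{A}^{\pm}_k)=0$; the small-aperture hypothesis gives $|F(\mathcal{A}^{\pm}_k)|\lesssim(\Lambda-\lambda)\|\mathcal{A}^{\pm}_k\|=O(\delta)$, a quantity that does \emph{not} decay in $k$, so after multiplication by $\rho^{-k\alpha}$ it is unbounded as $k\to\infty$. (The paper's own proof has the same latent issue, hidden in $F_{\ell}(0)=\rho^{-\ell\alpha}F(\mathcal{A}^{\pm}_{\ell})\neq 0$, and is equally terse about it.) The standard repair, which you should make explicit, is to correct each increment by a multiple of the identity, $\mathcal{A}^{\pm}_{k+1}=\mathcal{A}^{\pm}_k+\rho^{k\alpha}\tilde{\mathcal{A}}^{\pm}+s_kI$ with $s_k$ chosen so that $F(\mathcal{A}^{\pm}_{k+1})=0$; since $\operatorname{Tr}(\tilde{\mathcal{A}}^{\pm})=0$ and $F(\mathcal{A}^{\pm}_k)=0$ inductively, one gets $|s_k|\lesssim\delta\rho^{k\alpha}$, the added term $\tfrac{s_k}{2}|x|^2$ on $B_{\rho^{k+1}}$ is $O(\delta\rho^{(k+1)(2+\alpha)})$ and is absorbed for $\delta$ small, and now the problematic term in $f^{\pm}_k$ vanishes identically. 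This trades the exact trace-free condition for $F(\mathcal{A}^{\pm}_k)=0$, which is the invariant that actually makes the renormalization consistent.
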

\begin{proof} Since the proof depends on  Lemma \ref{LSS2}, we will omit some steps. The complete proof can be found in \cite{MPA}.  We will only outline the case $D^2_{x'} \psi (0')=0$ because the case $g(0)=0$ is analogous.   The proof proceeds by induction. For $k=1$, by the normalization, we are under the assumptions of Lemma  is precisely the statement of Lemma \ref{LSS2}.

Suppose we have verified \eqref{C01}, \eqref{C02}, \eqref{L12} and \eqref{L13} for $k=1,2,\ldots, \ell$.  Consider the rescaled function $v : \overline{B_1} \rightarrow \mathbb{R}$
$$
	v(x) = \frac{u(\rho^{\ell} x) - \mathfrak{P}_{\ell}(\rho^{\ell} x) }{\rho^{\ell (2+\alpha)}},
$$
where $\mathfrak{P}(x) \colon= \mathfrak{P}^{+}_{\ell} \chi_{\tilde{\Omega}^{+}_{\ell}} + \mathfrak{P}^{-}_{\ell} \chi_{\tilde{\Omega}^{-}_{\ell}}$,
$
\tilde{\Omega}^{\pm}_{\ell} = \{x \in B_1 : \rho^{\ell} x \in \Omega^{\pm}\}$ and  $ \tilde{\Gamma}_{\ell} = \{x \in B_1 : x_n = \psi_{\ell}(x')\},
$
for $\psi_{\ell}(x') = \rho^{-\ell} \psi(\rho^{\ell} x')$.  By the induction hypothesis, $\|v\|_{L^{\infty}(B_1)} \le 1$, and $v$ satisfies
$$
 \left\{
\begin{array}{rclcl}
 F_{\ell}(D^2 v, x) &=& f^{\pm}_{\ell}& \mbox{in} &   \tilde{\Omega}^{\pm}_{\ell} \\
 v^+_{\nu_{\ell}} - v^{-}_{\nu_{\ell}}  &=& g_{\ell} &\mbox{on}& \tilde{\Gamma}_{\ell}
\end{array}
\right.
$$
in the viscosity sense, with
\begin{eqnarray*}
F_{\ell}(M,x) &=& \rho^{-\ell \alpha} F_{\tau}(\rho^{\ell \alpha} M + \mathcal{A}^{\pm}_{\ell}, \rho^{\ell} x) \,\,\,\, \textrm{for} \,\,\, M \in \textrm{Sym}(n), \, x \in B_1.\\
f^{\pm}_{\ell}(x) &=& \rho^{- \ell \alpha} f^{\pm}(\rho^{\ell} x) \,\,\, \textrm{for any} \,\,\, x \in \tilde{\Omega}^{\pm}_{\ell}\\
g_{\ell}(x) &=& \rho^{-\ell (1+\alpha)} \left[ g(\rho^{\ell} x) - (\mathcal{A}^{+}_{\ell} - \mathcal{A}^{-}_{\ell}) (\rho^{\ell} x) \cdot \nu_{\ell}(x)-(\mathcal{B}^{+}_{\ell} -\mathcal{B}^{-}_{\ell}) \cdot \nu_{\ell}(x) \right] \,\,\, \textrm{for} \,\,\, x \in \tilde{\Gamma}_{\ell}
\end{eqnarray*}
It's possible to find $\delta>0$ such that,  $v$ satisfies the assumptions of Lemma \ref{LSS2}.  In fact,  the proof follows exactly as the one of proof of Theorem 1.3 in \cite{MPA}.  Then, there are  quadratic polynomials
$$
\mathfrak{P}^{\pm}(x) = \frac{1}{2}x^t \cdot \mathcal{A}^{\pm} \cdot x + \mathcal{B} \cdot x + \mathfrak{c},
$$
where $\mathcal{A}^{\pm} \in \textrm{Sym}(n)$, $\mathcal{B}^{\pm} \in \mathbb{R}^n$, $\mathfrak{c} \in \mathbb{R}$, and $
\|\mathcal{A}^{\pm}\|_{\textrm{Sym}(n)} + |\mathcal{B}^{\pm}| + |\mathfrak{c}| \le \mathfrak{C}_0, 
$
satisfying
\begin{eqnarray}
	\mathcal{B}^+_i - \mathcal{B}^{-}_i = 0 \,\,\, \textrm{if} \,\,\, i < n \quad &\textrm{and}& \quad \mathcal{B}^+_n - \mathcal{B}^{-}_n = g(0) \label{S03}\\
\mathcal{A}^{+}_{ij} - \mathcal{A}^{-}_{ij} =0 \,\,\, \textrm{if} \,\,\, i,j < n \,\,\, &\textrm{and}& \,\,\, \mathcal{A}^{+}_{nj}-\mathcal{A}^{-}_{nj} = g_{x_j}(0) \,\,\, \textrm{if} \,\,\, j < n \label{S04}
\end{eqnarray}
such that 
\begin{equation} \label{C05}
\|v^{\pm} - \mathfrak{P}^{\pm}\|_{L^{\infty}(\Omega^{\pm}_{\rho})} \le \rho^{2+\alpha}.
\end{equation}
Rewriting \eqref{C05} back to the original domain yields
$$
\left | u^{\pm}(z) - \mathfrak{P}^{\pm}_{\ell}(z) - \rho^{\ell(2+\alpha)} \mathfrak{P}^{\pm}_{\ell}(\rho^{-\ell} z) \right | \le \rho^{(\ell +1)(2+\alpha)}
$$
Finally, by defining $\mathfrak{P}^{\pm}_{\ell +1}(x) \colon= \mathfrak{P}^{\pm}_{\ell}(x) + \rho^{\ell (2+\alpha)} \mathfrak{P}^{\pm}(\rho^{-\ell} x)$ we check the $(\ell+1)^{\underline{th}}$  step of induction. Note that
$$
\mathcal{A}^{\pm}_{\ell +1} = \mathcal{A}^{\pm}_{\ell} + \rho^{\ell \alpha} \mathcal{A}^{\pm}, \,\,\,\, \mathcal{B}^{\pm}_{\ell +1} \colon= \mathcal{B}^{\pm}_{\ell} + \rho^{\ell (1+\alpha)}, \, \textrm{and} \,\,\, \mathfrak{c}_{\ell +1} \colon= \mathfrak{c}_{\ell} + \rho^{\ell (2+\alpha)} \mathfrak{c}. 
$$
Thus, using that $\|\mathcal{A}^{\pm}\|_{\textrm{Sym}(n)} + |\mathcal{B}^{\pm}| + |\mathfrak{c}| \le \mathfrak{C}_0$ we get that
$$
\rho^{2 \ell} \|\mathcal{A}^{\pm}_{\ell +1} - \mathcal{A}^{\pm}_{\ell}\|_{\textrm{Sym}(n)} + \rho^{\ell}|\mathcal{B}^{\pm}_{\ell +1} - \mathcal{B}^{\pm}_{\ell}| + |\mathfrak{c}_{\ell+1} - \mathfrak{c}_{\ell}| \le \mathfrak{C}_0 \rho^{\ell (2+\alpha)}.
$$
 Moreover, by \eqref{S03} and \eqref{S04} we have 
\begin{eqnarray*}
(\mathcal{B}^+_{\ell+1})_i - (\mathcal{B}^{-}_{\ell+1})_i &=& (\mathcal{B}^+_{\ell})_i - (\mathcal{B}^{-}_{\ell})_i  + \rho^{\ell(2+\alpha)} (\mathcal{B}^+_{i}-\mathcal{B}^{-}_i)=0, \,\, \textrm{if} \,\, i < n;\\
\mathcal{B}^+_{\ell+1})_n - (\mathcal{B}^{-}_{\ell+1})_n &=& (\mathcal{B}^+_{\ell})_n - (\mathcal{B}^{-}_{\ell})_n  + \rho^{\ell(2+\alpha)} (\mathcal{B}^+_{n}-\mathcal{B}^{-}_n)=g(0);\\
(\mathcal{A}^{+}_{\ell+1})_{ij} - (\mathcal{A}^{-}_{\ell+1})_{ij} &=& (\mathcal{A}^{+}_{\ell})_{ij} - (\mathcal{A}^{-}_{\ell})_{ij} + \rho^{\ell \alpha}(\mathcal{A}^{+}_{ij} - \mathcal{A}^{-}_{ij})=0 \,\, \textrm{if} \,\, i,j < n;\\
(\mathcal{A}^{+}_{\ell+1})_{nj} - (\mathcal{A}^{-}_{\ell+1})_{nj} &=& (\mathcal{A}^{+}_{\ell})_{nj} - (\mathcal{A}^{-}_{\ell})_{nj} + \rho^{\ell \alpha}(\mathcal{A}^{+}_{nj} - \mathcal{A}^{-}_{nj})=g_{x_j}(0) \,\, \textrm{if} \,\, j < n;\\
\textrm{Tr}(\mathcal{A}^{\pm}_{\ell+1}) &=& \textrm{Tr}(\rho^{\ell \alpha} \mathcal{A}^{\pm} + \mathcal{A}^{\pm}_{\ell}) = \rho^{\ell \alpha}\textrm{Tr}(\mathcal{A}^{\pm})=0.
\end{eqnarray*}
This concludes the proof of the Theorem.

\end{proof}

With Lemma \ref{L02} in hand, we now prove our main result Theorem \ref{BMO}. 
\begin{proof}[Proof of Theorem \ref{BMO}]
It is enough to prove the estimate at $x=0$.  Now, we notice that if follows  from \eqref{L12}, namely $\mathcal{A}^{\pm}_{k}$, $\mathcal{B}^{\pm}_{k}$ and $\mathfrak{c}_{k}$, are Cauchy sequences in $\textrm{Sym}(n)$, $\mathbb{R}^n$ and in $\mathbb{R}$, respectively. Let $\mathcal{A}^{\pm}_{\infty}$, $\mathcal{B}^{\pm}_{\infty}$ and $\mathfrak{c}_{\infty}$ be the limiting coefficients, i.e.
$$
\mathcal{A}^{\pm}_{\infty} = \lim_{k \to +\infty} \mathcal{A}^{\pm}_k, \, \mathcal{B}^{\pm}_{\infty} = \lim_{k \to +\infty} \mathcal{A}^{\pm}_{k} \quad \textrm{and} \quad \mathfrak{c}_{\infty} = \lim_{k \to +\infty} \mathfrak{c}_k.
$$
Ij the sequel, in view \eqref{L12}, 
$$
\| \mathfrak{P}^{\pm}_k - \mathfrak{P}^{\pm}_{\infty}\|_{L^{\infty}(\Omega^{\pm}_{\rho})} \le C_{\star} \rho^{2+\alpha}
$$
for some universal constant $C_{\star}>0$.  Now fix $0 < r < \rho$, we choose $k \in \mathbb{N}$ such that
$$
\rho^{k+1} < r \le \rho^k.
$$ 
We estimate
\begin{eqnarray*}
\|u^{\pm} - \mathfrak{P}^{\pm}_{\infty}\|_{L^{\infty}(\Omega^{\pm}_{r})} \le  \mathfrak{C}_0 \cdot r^{2+\alpha}
\end{eqnarray*}
\end{proof}

\section{Schauder estimates of quasiconcave equations} \label{Quase}

 In this section  we will study Schauder estimates for viscosity solution of fully nonlinear elliptic transmission problems when the operator fails to be concave or convex in the space of symmetric matrices. In particular, it is assumed that either the level sets are convex or the operator is concave, convex or close to a linear function near infinity.

 \subsection{Regularity estimates up to the flat interface}
 
 In this part, we will work initially with the regularity estimates up to the flat interface with constant coefficients, that is, with the following problem
  \begin{equation} \label{up}
	\left\{
	\begin{array}{rclcl}
		F(D^2 u) &=& f^{\pm} & \mbox{in} & B^{\pm}_1,\\
		u^{+}_{x_n} - u^{-}_{x_n}&=& g & \mbox{on} & T
	\end{array}
	\right.
	\end{equation}
 where $F$ is a quasiconvex operator.

 \begin{theorem} \label{flat}
 Let $F$ be a a quasiconcave operator and fix $\alpha \in (0,\alpha_0)$. Assume that $g \in C^{1,\alpha}(T)$ and $f^{\pm} \in C^{0,\alpha}(\overline{B^{\pm}_{1}})$ with $f^+ = f^{-}$ on $T$. Then any bounded viscosity solution $u$ of the problem
 $$
	\left\{
	\begin{array}{rclcl}
		F(D^2 u) &=& f^{\pm} & \mbox{in} & B^{\pm}_1,\\
		u^{+}_{x_n} - u^{-}_{x_n}&=& g & \mbox{on} & T,
	\end{array}
	\right.
	$$
satisfies $u \in C^{2,\alpha}(\overline{B^{\pm}_{1/2}})$ and
	$$
	\|u\|_{C^{2,\alpha}(\overline{B^{\pm}_{1/2}})} \le C \cdot (\|u\|_{L^{\infty}(B_1)} + \|g\|_{C^{1,\alpha}(T)} + \|f^{-}\|_{C^{0,\alpha}(\overline{B^{-}_1})} + \|f^{+}\|_{C^{0,\alpha}(\overline{B^{+}_1})}),
	$$
where $C>0$ is a constant depending only on $n,\lambda$ and $\Lambda.$
 \end{theorem}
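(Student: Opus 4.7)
The plan is to mirror the Caffarelli-type geometric tangential machinery already used in this paper (Lemmas \ref{Lema1}--\ref{LSS2} and \ref{L02}) with a single substitution: the tangential equation at the interface is a general quasiconcave equation $F(D^{2}h)=0$ rather than Laplace's equation, and its interior $C^{2,\alpha_{0}}$ regularity is supplied by Proposition \ref{usar}, which forces the restriction $\alpha<\alpha_{0}$. After the usual reduction of Section 3 (centering $F$ and $f^{\pm}$ at zero, rescaling so that $\|u\|_{L^{\infty}(B_{1})}\le 1$ and $[g]_{C^{1,\alpha}(0)}+[f^{+}]_{C^{0,\alpha}(0)}+[f^{-}]_{C^{0,\alpha}(0)}\le\delta_{0}$), I would prove the quasiconcave analog of Lemma \ref{Lema2}: for each $\epsilon>0$ there is $\delta>0$ such that any $\delta$-small normalized solution $u$ is within $\epsilon$ in $L^{\infty}(B_{3/4})$ of a bounded viscosity solution $h$ of
\begin{equation*}
\left\{
\begin{array}{rclcl}
F(D^{2}h) & = & 0 & \text{in} & B^{\pm}_{3/4},\\
h^{+}_{x_{n}}-h^{-}_{x_{n}} & = & g(0)+\nabla' g(0)\cdot x' & \text{on} & T_{3/4}.
\end{array}
\right.
\end{equation*}
This is a \emph{reductio ad absurdum} argument that uses the uniform $C^{\alpha_{1}}$ estimate of \cite[Theorem 1.1]{MPA} for compactness and the Stability Lemma \ref{Est} to pass to the limit; quasiconcavity is preserved under locally uniform convergence of operators.

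The core step is then a universal $C^{2,\alpha_{0}}$ bound for $h$ up to the flat interface. I would absorb the affine jump via the piecewise linear correction
\begin{equation*}
\ell(x)=\tfrac{1}{2}\bigl(g(0)+\nabla' g(0)\cdot x'\bigr)|x_{n}|,
\end{equation*}
so that $w:=h-\ell$ is $C^{1}$ across $T$ and solves $\tilde F^{\pm}(D^{2}w)=0$ in $B^{\pm}_{3/4}$ for the translated operators $\tilde F^{\pm}(M):=F(M+D^{2}\ell^{\pm})$, which remain $(\lambda,\Lambda)$-uniformly elliptic and quasiconcave. Proposition \ref{usar} provides the interior $C^{2,\alpha_{0}}$ estimate for $w$ inside each half-ball. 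To reach up to $T$ I would work with tangential difference quotients $\tau^{-1}(w(\cdot+\tau e')-w)$ for $e'\in T$: because $w$ is $C^{1}$ across $T$, these quotients extend continuously across $T$, lie in the Pucci class $\mathcal{S}^{\star}(\lambda,\Lambda,0)$ on each side, and can be controlled by a transmission Schauder argument for linear equations with vanishing jump and source. This upgrades them to $C^{1,\alpha_{0}}$ up to $T$, which yields $C^{\alpha_{0}}$ regularity up to $T$ for all tangential second derivatives of $w$; the remaining normal second derivative $\partial_{x_{n}x_{n}}w$ is recovered from $\tilde F^{\pm}(D^{2}w)=0$ by uniform ellipticity. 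At each point of $T$ this produces quadratic polynomials $\mathfrak{P}^{\pm}$ (with $\textrm{Tr}(\mathcal{A}^{\pm})=0$ and satisfying the jump conditions \eqref{S3}--\eqref{S4}) such that $\|h^{\pm}-\mathfrak{P}^{\pm}\|_{L^{\infty}(\Omega^{\pm}_{\rho})}\le C\rho^{2+\alpha_{0}}$.

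Choosing $\rho$ so that $C\rho^{2+\alpha_{0}}\le\tfrac{1}{2}\rho^{2+\alpha}$ and $\epsilon=\tfrac{1}{2}\rho^{2+\alpha}$ closes the one-scale approximation, and the dyadic iteration of Lemma \ref{L02} (applied verbatim to $v(x)=\rho^{-k(2+\alpha)}(u(\rho^{k}x)-\mathfrak{P}_{k}(\rho^{k}x))$) yields Cauchy sequences of quadratic polynomials satisfying the telescoping estimate \eqref{L12}. The limit gives the pointwise $C^{2,\alpha}$ estimate at every point of $T$; combined with the interior estimate from Proposition \ref{usar} applied on each side, this upgrades to the global $C^{2,\alpha}(\overline{B^{\pm}_{1/2}})$ bound with the universal constant. \textbf{The main obstacle} in this plan is the third step: unlike the convex setting, where Evans--Krylov combined with reflection as in \cite[Theorem 4.16]{MPA} delivers regularity across $T$ almost for free, no reflection is compatible with a generic quasiconcave $F$, so the up-to-$T$ Schauder estimate for $w$ has to be manufactured by tangential differentiation together with Proposition \ref{usar}, and care is required to make the difference-quotient argument go through for a fully nonlinear, possibly non-differentiable operator.
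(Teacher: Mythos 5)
Your plan takes a genuinely different route from the paper's, and the difference matters. The paper proves Theorem \ref{flat} by a direct Caffarelli-type dyadic iteration (modeled on \cite[Theorem 4.16]{MPA}) with \emph{no} compactness/approximation lemma at the flat-interface stage: at each scale $k$ it solves an auxiliary transmission problem for $v$ with operator $F(\cdot+\mathcal{A}^{\pm}_k)$, \emph{zero} flux jump, and boundary data $u-\mathfrak{P}_k$; the affine jump of $u$ is absorbed entirely into the quadratic polynomials $\mathfrak{P}^{\pm}_k$ via the conditions \eqref{flat5}, and the error $w=u-\mathfrak{P}_k-v$ is estimated by the ABP inequality. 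The decisive ingredient, which your sketch bypasses, is Proposition \ref{prop}: when the flux jump is zero and the source vanishes, the transmission problem has the \emph{same} unique solution as the ordinary Dirichlet problem $F(D^2w)=0$ in $B_1$, $w=\mathfrak{h}$ on $\partial B_1$, because the $C^2$ Dirichlet solution automatically has matching normal derivatives across $T$ and is picked out by uniqueness. Goffi's interior estimate (Proposition \ref{usar}) then delivers $C^{2,\alpha_0}$ of the auxiliary solution with the \emph{full} exponent $\alpha_0$ and no up-to-$T$ argument is needed at all.

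Your substitute for this step — subtract the piecewise-linear $\ell$ and then run tangential difference quotients — has a genuine gap. After subtracting $\ell$ the operators $\tilde F^{\pm}(M)=F(M+D^2\ell^{\pm})$ differ on the two sides, so the uniqueness trick identifying the transmission solution with a single Dirichlet solution is no longer available, and you really do have a two-operator transmission problem. The tangential difference quotients $\tau^{-1}(w(\cdot+\tau e')-w)$ lie only in the Pucci class $\mathcal{S}(\lambda,\Lambda,0)$ on each side; the best transmission regularity available for Pucci-class functions with zero flux jump (essentially the Krylov--Safonov up-to-interface estimate underlying \cite[Theorem 1.1]{MPA}) yields a $C^{1,\alpha_1}$ bound for some small universal $\alpha_1=\alpha_1(n,\lambda,\Lambda)$, with no reason whatsoever to have $\alpha_1\ge\alpha_0$. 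So this route produces $C^{2,\alpha_1}$ for $h$, not $C^{2,\alpha_0}$, and bootstrapping does not repair it because linearizing $F$ only gives coefficients with the regularity already obtained. Consequently, when you set $\rho$ so that $C\rho^{2+\alpha_1}\le\tfrac12\rho^{2+\alpha}$ you can only close the iteration for $\alpha<\alpha_1$, which is strictly weaker than the claimed range $\alpha\in(0,\alpha_0)$. The cost of your route is exactly the full exponent $\alpha_0$ that the theorem is designed to capture; the paper's uniqueness-based reduction of the zero-jump case to the interior Dirichlet problem is what makes $\alpha_0$ reachable. (As an aside, your compactness approximation step is also unnecessary for the flat-interface theorem: the paper reserves the compactness machinery of Lemmas \ref{L1}--\ref{L_2} for the curved-interface result Theorem \ref{BMOQ}, and proves the flat case by ABP-controlled iteration alone.)
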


 To prove Theorem \ref{flat} we will need the following result.
 
 \begin{proposition} \label{prop}
  Let $F$ be a quasiconcave operator, and  $0 < r \le 1$. Suppose that $\mathfrak{h}$ is a bounded viscosity solution of the problem
   \begin{equation} \label{flat1}
	\left\{
	\begin{array}{rclcl}
		F(D^2 \mathfrak{h}) &=& 0 & \mbox{in} & B^{\pm}_1,\\
		\mathfrak{h}^{+}_{x_n} - \mathfrak{h}^{-}_{x_n}&=& 0 & \mbox{on} & T.
	\end{array}
	\right.
	\end{equation}
	Then, for any radii $0 < \rho \le \frac{r}{2}$, it holds that  $\mathfrak{h} \in C^{2,\alpha_0}(\overline{B^{\pm}_{\rho}})$ with the following estimates holding true
	\begin{eqnarray*}
	\displaystyle\osc_{B^{\pm}_{\rho}} \left( \mathfrak{h}^{\pm} - \frac{1}{2} x^t \cdot D^2 \mathfrak{h}^{\pm}(0) \cdot x - \nabla \mathfrak{h}(0) \cdot x\right)& \le& C \left( \frac{\rho}{r}\right)^{2+\alpha_0} \cdot \osc_{\overline{B_r}} \, \mathfrak{h}\\
	r^2 \|D^2 \mathfrak{h}^{\pm}(0)\|_{\textrm{Sym}(n)} + r |\nabla \mathfrak{h}(0) |&\le& C \cdot \osc_{\overline{B_r}} \, \mathfrak{h},
	\end{eqnarray*}
	where $C$ is a constants that depends only on $ n,\lambda$, and $ \Lambda$.
 \end{proposition}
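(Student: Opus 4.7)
The plan is to mirror the proof of Proposition \ref{Grad} verbatim, with the interior Schauder estimate for harmonic functions replaced by the quasiconcave interior estimate of Proposition \ref{usar}. First I would rescale: setting $v(x) := \mathfrak{h}(rx) - \mathfrak{h}(0)$ for $x \in B_1$, I observe that $v$ is a bounded viscosity solution of the flat transmission problem with operator $\tilde F(M) := r^2 F(r^{-2} M)$, which is again uniformly elliptic in $\mathcal{E}(\lambda,\Lambda)$, still quasiconcave, and still satisfies $\tilde F(0) = 0$. Thus it is enough to establish the statement in the normalized case $r = 1$ for $v$ with $\|v\|_{L^\infty(B_1)} \le \osc_{\overline{B_r}} \mathfrak{h}$.

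Next I would introduce the Dirichlet problem $F(D^2 w) = 0$ in $B_1$ with $w = v$ on $\partial B_1$. Existence and uniqueness of a continuous viscosity solution $w$ follows by the standard Perron argument combined with the ABP bound (Lemma \ref{ABP-fullversion}) and the comparison principle for the single-phase equation. By Proposition \ref{usar} (which holds for both quasiconvex and quasiconcave $F$, as recorded in \eqref{Hom1}; for the quasiconcave case one may equivalently argue with $\widehat F(M) := -F(-M)$), $w$ belongs to $C^{2,\alpha_0}_{\mathrm{loc}}(B_1)$. In particular $w$ is classically $C^{2,\alpha_0}$ across the interior hyperplane $T$, so the transmission condition $w^+_{x_n} - w^-_{x_n} = 0$ on $T$ is automatically satisfied. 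Hence $w$ solves the flat transmission problem \eqref{flat1} with boundary data $v$, and by the uniqueness statement of Theorem \ref{Unicidade} we conclude $w \equiv v$ on $\overline{B_1}$.

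Now Proposition \ref{usar} delivers the bound $\|v\|_{C^{2,\alpha_0}(\overline{B_{1/2}})} \le C \|v\|_{L^\infty(B_1)}$, which is the single ingredient needed for the oscillation decay. A second-order Taylor expansion of $v^{\pm}$ around the origin then yields, for every $0 < \rho \le 1/2$,
\begin{equation*}
\osc_{B^{\pm}_{\rho}} \Bigl( v^{\pm} - \tfrac{1}{2} x^t \cdot D^2 v^{\pm}(0) \cdot x - \nabla v(0) \cdot x \Bigr) \le C \rho^{2+\alpha_0} \,\|v\|_{L^{\infty}(B_1)},
\end{equation*}
together with $\|D^2 v^{\pm}(0)\|_{\mathrm{Sym}(n)} + |\nabla v(0)| \le C \|v\|_{L^\infty(B_1)}$. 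Undoing the rescaling $v(x) = \mathfrak{h}(rx) - \mathfrak{h}(0)$ and noting that $\|v\|_{L^\infty(B_1)} \le \osc_{\overline{B_r}} \mathfrak{h}$ produces exactly the two inequalities claimed in the statement.

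The only step that is not purely mechanical is verifying that the Dirichlet solution $w$ does satisfy the homogeneous jump condition $w^+_{x_n} - w^-_{x_n} = 0$ on $T$, so that the uniqueness theorem for flat transmission problems can be applied. Here the key point is that $T$ lies in the \emph{interior} of $B_1$, so Proposition \ref{usar} (applied on any slightly smaller ball) gives true pointwise $C^{2,\alpha_0}$ regularity of $w$ across $T$, whence the normal-derivative jump vanishes identically. Everything else is a direct transcription of the proof of the Laplace version in Proposition \ref{Grad}, with $C^3$ replaced by $C^{2,\alpha_0}$ and the exponent $3$ replaced by $2+\alpha_0$.
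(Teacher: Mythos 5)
Your proof is correct and takes essentially the same route as the paper: pass to the Dirichlet problem $F(D^2 w)=0$ in $B_1$, identify $w$ with $\mathfrak{h}$ (resp.\ $v$) via uniqueness of the flat transmission problem, invoke the interior quasiconcave $C^{2,\alpha_0}$ estimate of Proposition~\ref{usar}, and conclude by Taylor expansion and rescaling. The only differences are cosmetic — you rescale first and prove the normalized case, whereas the paper proves the $r=1$ statement and then rescales — and you spell out a step the paper leaves tacit, namely that the Dirichlet solution automatically satisfies the homogeneous jump condition on $T$ because it is classically $C^{2,\alpha_0}$ across the interior hyperplane, which is precisely why Theorem~\ref{Unicidade} applies.
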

 \begin{proof}

 Fix $0 < \rho \le \frac{r}{2}$. Let $w$ be the unique viscosity solution to 
 $$
  \left\{
	\begin{array}{rclcl}
		F(D^2 w) &=& 0 & \mbox{in} & B_1,\\
		w &=& \mathfrak{h} & \mbox{on} & \partial B_1.
	\end{array}
	\right.
	$$
 Since $F$ is a quasiconcave, by Theorem \ref{usar}, $w \in C^{2,\alpha_0}_{loc}(B_1)$ and
 $$
 \|w\|_{C^{2,\alpha_0}(\overline{B_{1/2}})} \le C \|w\|_{L^{\infty}(B_1)}.
 $$
 Moreover, by uniqueness of viscosity solutions to flat interface problems, it follows that $w = \mathfrak{h}$ in $B_1$. Now, applying the above result for $\mathfrak{h}(rx)$, for $x \in B_1$, we get $\mathfrak{h} \in C^{2,\alpha_0}(\overline{B_{1/2}})$.  Let $v(x)= \mathfrak{h}(rx)-\mathfrak{h}(0)$, for $x \in B_1$. Then, by the mean value theorem,
	\begin{eqnarray*}
	\osc_{B^{\pm}_{\rho}} \left( \mathfrak{h}^{\pm} - \frac{1}{2} x^t \cdot D^2 \mathfrak{h}^{\pm}(0) \cdot x - \nabla \mathfrak{h}(0) \cdot x\right) &\le& C \rho^{2+\alpha_0}  \|D^2 \mathfrak{h}^{\pm}\|_{C^{0,\alpha_0}(\overline{B^{\pm}_{\rho}})}\\& =& C \left( \frac{\rho}{r}\right)^{2+\alpha_0} [D^2 v^{\pm}]_{C^{0,\alpha_0}(\overline{B^{\pm}_{\rho/r}})} \\
	&\le& C \left(\frac{\rho}{r} \right)^{2+\alpha_0} \|v\|_{L^{\infty}(B_1)} \\
	&\le& C  \left(\frac{\rho}{r} \right)^{2+\alpha_0} \osc_{\overline{B_r}} \mathfrak{h}.
	\end{eqnarray*}
	Moreover,
	$$
	r^2 \|D^2 \mathfrak{h}^{\pm}(0)\|_{\textrm{Sym}(n)} + r |\nabla \mathfrak{h}(0)| \le C \|v\|_{L^{\infty}(B_1)} \le  C \osc_{\overline{B_r}} \mathfrak{h}.
	$$
  This concludes the proof of the Proposition.
 \end{proof}
 
 \begin{proof}[Proof of Theorem \ref{flat}]
 It is enough to prove the estimate at $x=0$. Without loss of generality, assume that $v(0)=0$ and $g(0)=0$. Suppose further that $f^{+}(0)=f^{-}(0)=0$. Let
 $$
 M= \|u\|_{L^{\infty}(B_1)} + \|g\|_{C^{1,\alpha}(T)} + \|f^{-}\|_{C^{1,\alpha}(\overline{B^{-}_1})} + \|f^{+}\|_{C^{1,\alpha}(\overline{B^{+}_1})}.
 $$

 {\bf Claim:} \, For all $k \ge 0$, there exist  $0 < \gamma < 1$ and $\mathfrak{C}_0, \mathfrak{C}_1 >0$, depending only on $n,\lambda, \Lambda$ and $\alpha$, and sequences of quadratic polynomials $\mathfrak{P}^{\pm}_k(x) = \frac{1}{2} x^t \cdot \mathcal{A}^{\pm}_k \cdot x + \mathcal{B}_k \cdot x, \,\, k \ge 0 $, such that
 \begin{eqnarray}
 \osc_{B_{\rho^k}} \left(u^{\pm} - \mathfrak{P}^{\pm} \right) &\le& \mathfrak{C}_0 M \gamma^{k(2+\alpha)} \label{flat3}\\
 \gamma^{k-1} \|\mathcal{A}^{\pm}_k - \mathcal{A}^{\pm}_{k-1}\|_{\textrm{Sym}(n)} + |\mathcal{B}_{k}-\mathcal{B}_{k-1}| &\le& \mathfrak{C}_1 M \gamma^{(k-1)(2+\alpha)}, \label{flat4}
 \end{eqnarray}
 for any $k \ge 0$, where $\mathcal{A}^{\pm}_{-1} =0$ and $\mathcal{B}_{-1}=0$. Furthermore, $F(\mathcal{A}^{\pm},0)=0$ and
 \begin{equation} \label{flat5}
 (\mathcal{A}^{+}_{k})_{ij} - (\mathcal{A}^{-}_{k})_{ij}=0 \,\,\, \textrm{if} \,\, i,j \not= n \quad \textrm{and} \quad (\mathcal{A}^{+}_{k})_{jn} - (\mathcal{A}^{-}_{k})_{jn}- g_{x_j}(0) \,\,\, \textrm{if} \,\, j \not= n,
 \end{equation}
 where $\mathcal{A}_{ij}$ denotes the element in the $(i,j)$-entry of the matrix $\mathcal{A}$.
 
 We argue by finite induction.  In fact, for $k=0$, choose $\mathcal{B}_0=0$ and $\mathcal{A}^{\pm}_0$ symmetric sucht that \eqref{flat5} holds and $(\mathcal{A}^{\pm}_0)_{nn}$ is given by $F(\mathcal{A}^{\pm}_0)=0$. Then $\|\mathcal{A}^{\pm}_0\| \le \mathfrak{C}_1 |\nabla' g(0)| \le \mathfrak{C}_1 M $, for some $\mathfrak{C}_1 >0$. Moreover, choose $\mathfrak{C}_0 >1$  large so that
 $$
 	\osc_{B_1} \left( u^{\pm}- \mathfrak{P}^{\pm}_0 \right) \le 2 \left( \|u\|_{L^{\infty}(B_1)} + \|\mathcal{A}^{\pm}_0\|_{\textrm{Sym}(n)}\right) \le 2(1+\mathfrak{C}_1) M \le \mathfrak{C}_0 M.
 $$
 Assume that \eqref{flat3}-\eqref{flat5} hold for some $k \ge 0$. Let $r=\rho^k_0$ and $\mathfrak{P}_k = \mathfrak{P}^+_k \chi_{\overline{B^+_r}} + \mathfrak{P}^{-}_k \chi_{\overline{B^{-}_r}}$.  Note that, by \eqref{flat5}, we have $\mathfrak{P}^{+}_k = \mathfrak{P}^{-}_k$ on $T \cap B_r$, and thus, $\mathfrak{P}_k \in C(\overline{B_r})$. Let $v \in C(\overline{B_r})$ be the viscosity solution to
 $$
  \left\{
	\begin{array}{rclcl}
		F(D^2 v + \mathcal{A}^{\pm}_k) &=& 0 & \mbox{in} & B^{\pm}_r,\\
		v^+_{x_n} - v^{-}_{x_n} &=& 0 &\mbox{on}& T \cap B_r\\
		v &=& u - \mathfrak{P}_k & \mbox{on} & \partial B_r
	\end{array}
	\right.
 $$
For the proof of existence, see \cite[Theorem 4.11]{MPA}. From ABP estimate (cf. \cite[Theorem 2.1]{MPA}) and the fact that $F(\mathcal{A}^{\pm}_k,0)=0$,
 \begin{equation} \label{flat6}
 \osc_{\overline{B_r}} \, v \le \osc_{\overline{B_r}} \, (u-\mathfrak{P}_k).
 \end{equation}
 Fix $\rho \le \frac{r}{2}$ to be determined later. By Proposition \ref{prop}, we know that $v^{\pm} \in C^{2,\alpha_0}(\overline{B^{\pm}_{\rho}})$, with
 \begin{eqnarray}
 \osc_{B^{\pm}_{\rho}} \left( v^{\pm} - \mathfrak{P}^{\pm}\right) &\le& C \left( \frac{\rho}{r}\right)^{2+\alpha_0} \osc_{\overline{B_r}} v\label{flat7}\\
 r^2 \|\mathcal{Q}^{\pm}\|_{\textrm{Sym}(n)} + r |\mathcal{R}| &\le& C \osc_{\overline{B_r}} v, \label{flat8}
 \end{eqnarray}
 where $\mathfrak{P}^{\pm}(x) = \frac{1}{2} x^t \cdot \mathcal{Q}^{\pm} \cdot x + \mathcal{R} \cdot x $, $\mathcal{Q}^{\pm} = D^2 v^{\pm}(0)$, and $\mathcal{R} = \nabla v(0)$.  Moreover, as $F(D^2 v + \mathcal{A}^{\pm}_k)=0$, it follows that 
 \begin{equation} \label{flat9}
 F(\mathcal{Q} + \mathcal{A}^{\pm}_k)=0.
 \end{equation}
 Let $\rho = \gamma r$ and $\epsilon = \alpha_0-\alpha>0$. Choose $\gamma \le 1/2$ small enough so that $C \gamma^{\epsilon} \le 1/2$. Combining \eqref{flat6}, \eqref{flat7}, and the induction hypothesis, we see that
 \begin{equation} \label{flat10}
 \osc_{B^{\pm}_{\rho}} \left(v^{\pm}-\mathfrak{P}^{\pm} \right) \le C \left( \frac{\rho}{r}\right)^{2+\alpha_0} \osc_{\overline{B_r}} \left( u - \mathfrak{P}_k\right) \le \frac{1}{2} \mathfrak{C}_0 M \gamma^{(k+1)(2+\alpha)}.
 \end{equation}
 Let $w=u-\mathfrak{P}_k-v$. So, taking into account \eqref{flat5}, it follows that
 $$
   \left\{
	\begin{array}{rclcl}
		w \in \mathcal{S}(\lambda/n, \Lambda, f^{\pm}) &&  & \mbox{in} & B^{\pm}_r,\\
		w^+_{x_n} - w^{-}_{x_n} &=& g - \nabla'g(0) \cdot x'  &\mbox{on}& T \cap B_r\\
		w &=& 0 & \mbox{on} & \partial B_r
	\end{array}
	\right.
 $$Now, using the rescaled ABP estimate (see \cite[Theroem 2.1]{MPA}), and the assumptions on $g$ and $f^{\pm}$, we get
 \begin{eqnarray*}
 	\|w\|_{L^{\infty}(B_{\rho})} &\le& C \rho \left( \|g-g(0)-\nabla' g(0) \cdot x'\|_{L^{\infty}(T \cap B_{\rho})} + \|f^{\pm}-f^{\pm}(0)\|_{L^n(B^{\pm}_{\rho})} \right)\\
	&\le& C \rho^{2+\alpha} \left( [g]_{C^{1,\alpha}(0)} + [f^{\pm}]_{C^{0,\alpha}(0)} \right) \le C M \rho^{2+\alpha}.
 \end{eqnarray*}
 Choose $\mathfrak{C}_0 \ge 4C$. In view of \eqref{flat10} and the previous estimate, we have
 $$
 \osc_{B^{\pm}_{\gamma^{k+1}}} \left( u^{\pm} - \mathfrak{P}^{\pm}_k - \mathfrak{P}^{\pm} \right) \le \osc_{B_{\rho}} \, w + \osc_{B^{\pm}_{\rho}} \, (v^{\pm} - \mathfrak{P}^{\pm}) \le \mathfrak{C}_0 M \gamma^{(k+1)(2+\alpha)}.
 $$
 Hence, the estimate in \eqref{flat3} holds for $k+1$ with $\mathfrak{P}_{k+1} = \mathfrak{P}_k + \mathfrak{P}$. To prove \eqref{flat4}, we use \eqref{flat8}, \eqref{flat6}, and the induction hypothesis to get
 $$
 \gamma^k \|\mathcal{A}^{\pm}_{k+1}\mathcal{A}^{\pm}_k\|_{\textrm{Sym}(n)} + | \mathcal{B}_{k+1} - \mathcal{B}_k| \le \mathfrak{C}_1 M \gamma^{k(1+\alpha)}, 
 $$
 where $\mathfrak{C}_1 = C \mathfrak{C}_0$. Since, $\mathfrak{A}^{\pm}_{k+1} = \mathcal{A}^{\pm}_k + \mathcal{Q}$, then $F(\mathcal{A}^{\pm}_{k+1},0)=0$ and hence \eqref{flat5} follows from the induction hypothesis.  
 \end{proof}

 \subsection{Regularity estimate for non-flat interface problems}
 
  The key ingredients to prove Theorem \ref{BMOQ} are the following lemmas, whose proofs will be omitted because they are similar to those of \cite[Lemma 5.7 and 5.8]{MPA}.

 \begin{lemma}\label{L1}
Assume that $\Gamma= B_1 \cap \{x_n = \psi(x')\}$ for some $\psi \in C^2(B'_1)$. Fix a vector $D' \in \mathbb{R}^{n-1}$ and $b \in \mathbb{R}$.  Then, given any $\epsilon>0$ there is $\delta >0$ such that if $u \in C(B_1)$ is a viscosity solution of the problem
$$
	\left\{
	\begin{array}{rclcl}
		F(D^2 u) &=& f^{\pm} & \mbox{in} & \Omega^{\pm},\\
		u^{+}_{\nu} - u^{-}_{\nu}&=& g & \mbox{on} & \Gamma,
	\end{array}
	\right.
	$$
with $\|u\|_{L^{\infty}(B_1)} \le 1$ and 
\begin{equation} \label{Hip}
	\max \{\|\psi\|_{C^2(B'_1)}, \|g- D' \cdot x -b\|_{L^{\infty}(\Gamma)} , \|f^{\pm}\|_{L^{\infty}(\Omega^{\pm})} \} \le \delta,
\end{equation}
 then there exist a bounded viscosity solution $v \in C(B_{3/4})$ to 
 \begin{equation}\label{Laplace}
	\left\{
	\begin{array}{rclcl}
		F(D^2 v) &=& 0 & \mbox{in} & B^{\pm}_{3/4},\\
		v^{+}_{x_n} - v^{-}_{x_n}&=& D'\cdot x'+b & \mbox{on} & B_{3/4} \cap \{x_n=0\},
	\end{array}
	\right.
	\end{equation}
	satisfying
$$
\|u - v\|_{L^{\infty}(B_{3/4})} \le \epsilon.
$$
\end{lemma}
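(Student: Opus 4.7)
The plan is to proceed by a compactness–contradiction argument, essentially mirroring the proof of Lemma \ref{Lema1} but with the crucial difference that the limit operator is $F$ itself rather than the Laplacian, since here we do not impose any Cordes–Landis-type restriction on the ellipticity aperture.

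Step 1 (Negation): Suppose the conclusion fails. Then there exist $\epsilon_0>0$ and sequences $(u_j)$, $(f^{\pm}_j)$, $(g_j)$, $(\psi_j)$ with $F$ fixed, satisfying $\|u_j\|_{L^{\infty}(B_1)}\le 1$ and
\[
\|\psi_j\|_{C^2(B'_1)}+\|g_j - D'\cdot x - b\|_{L^{\infty}(\Gamma_j)}+\|f^{\pm}_j\|_{L^{\infty}(\Omega^{\pm}_j)}\le \tfrac1j,
\]
while $\|u_j-v\|_{L^{\infty}(B_{3/4})}>\epsilon_0$ for every viscosity solution $v$ of the limiting flat-interface problem \eqref{Laplace}.

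Step 2 (Compactness): Apply the global Hölder regularity for transmission problems \cite[Theorem 1.1]{MPA}, exactly as in \eqref{4.6}, to obtain a universal bound $\|u_j\|_{C^{\alpha_1}(\overline{B_{3/4}})}\le C$ for $j\gg 1$. Arzelà–Ascoli yields a subsequence (still denoted $u_j$) converging uniformly on compact subsets of $B_{3/4}$ to some $u_{\infty}\in C(\overline{B_{3/4}})$.

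Step 3 (Stability): Verify the hypotheses of Lemma \ref{Est}: since $F$ is fixed, (i) is trivial; (ii) follows from Step 2; (iii) follows from $\|f^{\pm}_j\|_{L^{\infty}}\to 0$; for (iv), note that $g_j(x',\psi_j(x'))$ differs from $D'\cdot(x',\psi_j(x'))+b$ by at most $1/j$, and since $\psi_j\to 0$ uniformly this sequence converges uniformly to $D'\cdot x'+b$; finally (v) is precisely the condition $\|\psi_j\|_{C^2(B'_1)}\to 0$. Therefore $u_{\infty}$ is a bounded viscosity solution of
\[
\begin{cases}
 F(D^2 u_{\infty})=0 & \text{in } B^{\pm}_{3/4},\\
 (u_{\infty})^+_{x_n}-(u_{\infty})^-_{x_n}=D'\cdot x'+b & \text{on } T_{3/4}.
\end{cases}
\]
Taking $v=u_{\infty}$ contradicts the assumption $\|u_j-v\|_{L^{\infty}(B_{3/4})}>\epsilon_0$ for $j$ large enough, completing the proof.

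The main subtlety is the invocation of \cite[Theorem 1.1]{MPA} in Step 2 with constants independent of $j$: one needs the uniform $C^{\alpha_1}$ estimate to hold across the curving interfaces $\Gamma_j$. This is legitimate because the interface data ($\|\psi_j\|_{C^2}$, $\|g_j\|_{L^{\infty}}$, $\|f_j^{\pm}\|_{L^{\infty}}$) are uniformly controlled by hypothesis, so the universal constant provided by \cite[Theorem 1.1]{MPA} is genuinely independent of $j$. Everything else is a straightforward application of the stability machinery already developed in \cite{MPA}, so unlike Lemma \ref{Lema1} no further analysis of $F_j$ (which does not vary here) is required.
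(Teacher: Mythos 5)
Your proof is correct and follows the approach the paper intends: the paper omits the argument and refers to \cite[Lemma 5.7]{MPA}, where precisely this compactness--contradiction scheme (uniform global $C^{\alpha_1}$ estimates from \cite[Theorem 1.1]{MPA}, Arzel\`a--Ascoli, then the Stability Lemma \ref{Est} with $F$ held fixed) is carried out. You also correctly identify the only substantive change relative to Lemma \ref{Lema1}: without a Cordes--Landis hypothesis the operator does not degenerate to a multiple of the Laplacian, so the stability limit $u_\infty$ solves $F(D^2 v)=0$ across the flattened interface and can itself serve as the approximating $v$.
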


 \begin{lemma}\label{L_1}
Consider $\Gamma= B_1 \cap \{x_n = \psi(x')\}$, for some  function$\psi \in C^2(B'_1)$, and that the term $f^{\pm}$ satisfy the inequality 
$$
\left( \intav{B_r(x_0) \cap \Omega^{\pm}} |f^{\pm}|^n dx \right)^{1/n} \le C_{f^{\pm}} \cdot r^{\alpha-1},
$$
for all small radii $r>0$, $x_0 \in \Omega^{\pm} \cup \Gamma$, and some $\alpha \in (0,1)$. Fix $D' \in \mathbb{R}^{n-1}, b \in \mathbb{R}$ and consider $u\in C(B_1 \setminus \Gamma) \cap L^{\infty}(B_1)$  a viscosity solution to
$$
	\left\{
	\begin{array}{rclcl}
		F(D^2 u) &=& f^{\pm} & \mbox{in} & \Omega^{\pm},\\
		u^{+}_{\nu} - u^{-}_{\nu}&=& g & \mbox{on} & \Gamma
	\end{array}
	\right.
	$$
with $\|u\|_{L^{\infty}(B_1)} \le 1$, $u^+ -u^{-} \equiv h \in C^2(\Gamma)$.  Then, it holds that given an arbitrary  $\epsilon>0$, there exists $\tilde{\delta}>0$ such that if the inequality
\begin{equation} \label{Estima_1}
	\max \{\|\psi\|_{C^2(B'_1)}, \|g- D' \cdot x -b\|_{L^{\infty}(\Gamma)} , \|f^{\pm}\|_{L^{\infty}(\Omega^{\pm})} \} \le \tilde{\delta},
\end{equation}
holds, then there exists a bounded viscosity solution $v \in C(B_{1/2})$ to the problem
 \begin{equation}\label{Laplace_1}
	\left\{
	\begin{array}{rclcl}
		F(D^2 v) &=& 0 & \mbox{in} & B^{\pm}_{1/2},\\
		v^{+}_{x_n} - v^{-}_{x_n}&=& D'\cdot x'+b & \mbox{on} & B_{3/4} \cap \{x_n=0\}
	\end{array}
	\right.
	\end{equation}
satisfying
$$
\|u-v\|_{L^{\infty}(B_{1/2})} \le \epsilon.
$$

\end{lemma}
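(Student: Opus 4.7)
The proof follows the contradiction-and-compactness template of Lemma \ref{Lema2}, the extra ingredient being the need to absorb the prescribed jump $u^+-u^-\equiv h$ across the curved interface before any convergence argument can be applied; this parallels the strategy of \cite[Lemma 5.8]{MPA} but now relative to the general $F$ of Lemma \ref{L1} rather than the Laplacian.

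Assume the conclusion fails: there exist $\epsilon_0>0$ and sequences $(\psi_j, f_j^{\pm}, g_j, h_j, u_j)$ satisfying the hypotheses with $\tilde{\delta}_j\downarrow 0$ (and $\|h_j\|_{C^2(\Gamma_j)}$ uniformly bounded, which we may assume after the usual normalization), yet $\|u_j-v\|_{L^{\infty}(B_{1/2})}>\epsilon_0$ for every viscosity solution $v$ of \eqref{Laplace_1}. Since $\Gamma_j\in C^2$ and $h_j\in C^2(\Gamma_j)$, extend each $h_j$ to $H_j\in C^2(\overline{B_1})$ with $\|H_j\|_{C^2(\overline{B_1})}\le C\|h_j\|_{C^2(\Gamma_j)}$ and supported in a small neighbourhood of $\Gamma_j$. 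Define the continuized function
\[
\tilde{u}_j(x) := u_j(x) - H_j(x)\,\chi_{\Omega_j^{+}}(x),
\]
which lies in $C(B_1)$. A direct check shows $\tilde{u}_j$ is a viscosity solution of a modified transmission problem whose interior equations stay in the Pucci class $\mathcal{S}^{\star}(\lambda,\Lambda,\tilde{f}_j^{\pm})$ with $\tilde{f}_j^{\pm}\in L^{\infty}$ bounded (by $f_j^{\pm}$ plus contributions from $D^2 H_j$ estimated via uniform ellipticity), and whose interface condition $\tilde{g}_j=g_j-(\partial_{\nu}H_j)|_{\Gamma_j}$ still converges to $D'\cdot x'+b$ on $T$ as $j\to\infty$.

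Global H\"older estimates for transmission problems (cf.\ \cite[Theorem 1.1]{MPA}) then yield $\|\tilde{u}_j\|_{C^{\alpha_1}(\overline{B_{3/4}})}\le C$ uniformly in $j$. Extracting subsequences, $\tilde{u}_j\to\tilde{u}_{\infty}$ locally uniformly on $B_{3/4}$ and $H_j\to H_{\infty}$ in $C^2$. The Stability Lemma (Lemma \ref{Est}) then identifies $\tilde{u}_{\infty}$ as a viscosity solution of an $F$-harmonic problem on $B^{\pm}_{1/2}$ with flat interface jump condition accounting for $H_\infty$. Reconstructing $v:=\tilde{u}_{\infty}+\chi_{\{x_n>0\}} H_{\infty}$, one verifies that $v$ solves \eqref{Laplace_1} and that $\|u_j-v\|_{L^{\infty}(B_{1/2})}\to 0$, contradicting the failure assumption.

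The main delicacy lies in the continuization step: the modified source $\tilde{f}_j^{\pm}$ contains $O(1)$ contributions from $D^2 H_j$ that do \emph{not} shrink with $\tilde{\delta}_j$, so they cannot be absorbed as small perturbations. The resolution, which is the core point borrowed from \cite[Lemma 5.8]{MPA}, is that these contributions are nevertheless uniformly bounded in $L^{\infty}$, which suffices for the H\"older estimate to produce equicontinuity and for the Stability Lemma to pass to a limit that honestly satisfies the flat-interface homogeneous equation after recombination with $H_{\infty}$. The remaining checks (joint convergence of $F_j$ or $\mathcal{S}^\star$ class, preservation of viscosity inequalities across $\Gamma_j$, and identification of the limit jump as $D'\cdot x'+b$) are routine and proceed exactly as in the proof of Lemma \ref{Lema2}.
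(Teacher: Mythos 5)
Your proposal correctly identifies the core technical device — subtract a $C^2$ extension $H_j$ of the jump $h_j$, compensating on one side so that the continuized function $\tilde u_j:=u_j-H_j\chi_{\Omega_j^+}$ lies in $C(B_1)$ — and this is precisely the idea of \cite[Lemma 5.8]{MPA} that the paper invokes. However, there is a genuine gap in how you process the continuization. You push $\tilde u_j$ into the Pucci class $\mathcal S^{\star}(\lambda,\Lambda,\tilde f_j^{\pm})$ with bounded sources. This is enough for uniform interior H\"older estimates (hence compactness), but once you have only Pucci differential inequalities in hand, the limit $\tilde u_\infty$ is not known to solve \emph{any} $F$-equation: the structural information about $F$ has been thrown away. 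Consequently, when you recombine and set $v:=\tilde u_\infty+\chi_{\{x_n>0\}}H_\infty$, you have no grounds to claim $F(D^2v)=0$ in $B^\pm_{1/2}$, which is exactly what Lemma \ref{L_1} requires of $v$. The sentence ``the Stability Lemma \ldots to pass to a limit that honestly satisfies the flat-interface homogeneous equation after recombination'' is thus the step that does not actually follow.

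The fix (and the route the paper really takes, modelled on how Lemma \ref{Lema2} reduces to Lemma \ref{Lema1}) is to \emph{retain} the operator: in $\Omega_j^+$ one has $F(D^2\tilde u_j+D^2H_j(x))=f_j^+$, i.e.\ the continuized function solves a uniformly elliptic equation with the \emph{same} ellipticity constants but an $x$-dependent shift of the argument. By uniform ellipticity, $|F(D^2\tilde u_j+D^2H_j)-F(D^2\tilde u_j)|\le\Lambda\|D^2H_j\|$, so $\tilde u_j$ solves $F(D^2\tilde u_j)=f_j^\pm+O(\|D^2H_j\|_\infty)$. Under the (implicit, and needed) smallness of $\|h\|_{C^2(\Gamma)}$, the error term is small and one may \emph{apply Lemma \ref{L1} directly} to $\tilde u_j$ to produce $\tilde v$ solving $F(D^2\tilde v)=0$ with $\|\tilde u_j-\tilde v\|_{L^\infty}\le\epsilon/2$; then $v:=\tilde v+\chi_{\{x_n>0\}}H_\infty$ does the job because $H_\infty$ is small. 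Note also that your claim that the $D^2H_j$ contributions ``do not shrink with $\tilde\delta_j$'' is misleading: if they genuinely did not shrink, the final estimate $\|u_j-v\|_{L^\infty(B_{1/2})}\to0$ would fail on the wedge between $\Gamma_j$ and $T$, since there $|u_j-v|\approx|H_j|$. The argument is coherent precisely because $h$ is small, and the proof should be organized around Lemma \ref{L1} rather than re-running a compactness/contradiction scheme that discards $F$.
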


Theorem \ref{BMOQ} will be a consequence of iterating the next lemma.

\begin{lemma} \label{L_2}
Let $F$ be a quasiconcave and uniformly elliptic operator. Given $\alpha \in (0,\alpha_0)$, there are constants $\mathfrak{C}_0>0$, and $0 < \epsilon , \rho < 1/2$, depending only on the quantities $n,\lambda,\Lambda$, and $\alpha$ such that, if $u$ satisfies the assumptions of Lemma \ref{L_1} with $D' = \nabla' g(0)$ and $b=g(0)$, and $\|g\|_{C^1(\Gamma)} \le 1$, then there are  polynomials with quadratic growth
$$
\mathfrak{P}^{\pm}(x) = \frac{1}{2}x^t \cdot \mathcal{A}^{\pm} \cdot x + \mathfrak{B}^{\pm} \cdot x + \mathfrak{c},
$$
where $\mathcal{A}^{\pm} \in \textrm{Sym}(n)$, $\mathcal{B}^{\pm} \in \mathbb{R}^n$, $\mathfrak{c} \in \mathbb{R}$, and $\|\mathcal{A}^{\pm}\|_{\textrm{Sym}(n)} + |\mathcal{B}^{\pm}| + |\mathfrak{c}| \le \mathfrak{C}_0$ such that the following estimate holds true
$$
\|u^{\pm} - \mathfrak{P}\|_{L^{\infty}(\Omega^{\pm}_{\rho})} \le \rho^{2+\alpha}.
$$
Moreover, it holds that $F(\mathcal{A}^{\pm},0)=0$ and
\begin{eqnarray}
	\mathcal{B}^+_i - \mathcal{B}^{-}_i = 0 \,\,\, \textrm{if} \,\,\, i < n \quad &\textrm{and}& \quad \mathcal{B}^+_n - \mathcal{B}^{-}_n = g(0), \label{S_3}\\
\mathcal{A}^{+}_{ij} - \mathcal{A}^{-}_{ij} =0 \,\,\, \textrm{if} \,\,\, i,j < n \,\,\, &\textrm{and}& \,\,\, \mathcal{A}^{+}_{nj}-\mathcal{A}^{-}_{nj} = g_{x_j}(0) \,\,\, \textrm{if} \,\,\, j < n \label{S_4}
\end{eqnarray}

\end{lemma}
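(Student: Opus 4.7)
The plan is to mimic the argument of Lemma~\ref{LSS2}, replacing the role of Proposition~\ref{Grad} (which furnished $C^3$ estimates for the limiting Laplace equation) with Theorem~\ref{flat} (which furnishes $C^{2,\alpha_0}$ estimates for the flat interface problem driven by $F$ itself). First I would approximate $u$ by a viscosity solution of the flat interface problem for $F$; then I would take $\mathfrak{P}^\pm$ to be the quadratic Taylor polynomial of that approximant at the origin.

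More precisely, fix $0 < \epsilon, \rho < 1/2$ to be tuned later, and invoke Lemma~\ref{L_1} with $D' = \nabla' g(0)$ and $b = g(0)$. This yields a threshold $\tilde\delta > 0$ and, under the smallness \eqref{Estima_1}, a bounded viscosity solution $v \in C(B_{1/2})$ of
\begin{equation*}
\left\{
\begin{array}{rclcl}
F(D^2 v) &=& 0 & \mbox{in} & B^{\pm}_{1/2},\\
v^+_{x_n} - v^-_{x_n} &=& \nabla' g(0) \cdot x' + g(0) & \mbox{on} & T \cap B_{1/2},
\end{array}
\right.
\end{equation*}
satisfying $\|u - v\|_{L^{\infty}(B_{1/2})} \le \epsilon$. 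Because the interface datum is affine (with norm controlled by $\|g\|_{C^1(\Gamma)} \le 1$) and $f^{\pm} \equiv 0$, Theorem~\ref{flat} applies to $v$ and yields
\begin{equation*}
\|v^{\pm}\|_{C^{2,\alpha_0}(\overline{B^{\pm}_{1/4}})} \le C_0,
\end{equation*}
for some universal $C_0 = C_0(n,\lambda,\Lambda) > 0$.

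Defining
\begin{equation*}
\mathfrak{P}^{\pm}(x) := \tfrac{1}{2} x^t \cdot D^2 v^{\pm}(0) \cdot x + \nabla v^{\pm}(0) \cdot x + v(0),
\end{equation*}
the bound above controls $\|\mathcal{A}^{\pm}\|_{\textrm{Sym}(n)} + |\mathcal{B}^{\pm}| + |\mathfrak{c}| \le \mathfrak{C}_0$ for a universal $\mathfrak{C}_0$, and the $C^{2,\alpha_0}$ Taylor remainder delivers $\|v^{\pm} - \mathfrak{P}^{\pm}\|_{L^{\infty}(B^{\pm}_{\rho})} \le C_0 \rho^{2+\alpha_0}$ for every $\rho \le 1/4$. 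Since $\alpha < \alpha_0$, I can pick $\rho$ small enough that $C_0 \rho^{2+\alpha_0} \le \tfrac{1}{2}\rho^{2+\alpha}$, and then set $\epsilon := \tfrac{1}{2}\rho^{2+\alpha}$; the triangle inequality then gives
\begin{equation*}
\|u^{\pm} - \mathfrak{P}^{\pm}\|_{L^{\infty}(\Omega^{\pm}_{\rho})} \le \|u - v\|_{L^{\infty}(B_{1/2})} + \|v^{\pm} - \mathfrak{P}^{\pm}\|_{L^{\infty}(B^{\pm}_{\rho})} \le \rho^{2+\alpha}.
\end{equation*}
The compatibility relations \eqref{S_3}--\eqref{S_4} follow, exactly as in Lemma~\ref{LSS2}, from $v^+ = v^-$ on $T$ (forcing $\nabla' v^+(0) = \nabla' v^-(0)$ and $D^2_{x'} v^+(0) = D^2_{x'} v^-(0)$) together with tangential differentiation of the transmission condition; the identity $F(\mathcal{A}^{\pm},0) = 0$ is immediate from $F(D^2 v^{\pm}) = 0$ at the origin.

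The principal obstacle is not any single inequality but the cascading choice of smallness parameters: the exponent gap $\alpha_0 - \alpha > 0$ dictates $\rho$, which forces $\epsilon$, which in turn determines the threshold $\tilde\delta$ required on $\|\psi\|_{C^2(B'_1)}$, $\|g - D'\cdot x - b\|_{L^{\infty}(\Gamma)}$ and $\|f^{\pm}\|_{L^{\infty}(\Omega^{\pm})}$. A subtler conceptual point, distinguishing this from the small-aperture case of Lemma~\ref{LSS2}, is that the approximating problem is still posed for $F$ itself rather than for a linear limit operator, so one must exploit that quasiconcavity is retained by the approximant (built into Lemma~\ref{L_1}) in order for Theorem~\ref{flat} to apply verbatim.
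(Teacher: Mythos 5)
Your proposal is correct and matches the paper's approach: the paper's own proof of this lemma simply says to run the argument of \cite[Lemma 7.1]{MPA} (i.e.\ the same scheme as Lemma~\ref{LSS2}) with Theorem~\ref{flat} substituted for the flat-interface regularity estimate, and that is precisely what you do via Lemma~\ref{L_1} followed by Theorem~\ref{flat} and a Taylor expansion of the approximant $v$ at the origin. One small precision: Theorem~\ref{flat} as stated yields a $C^{2,\alpha'}$ bound on $v$ only for exponents $\alpha' < \alpha_0$, not at the endpoint $\alpha_0$, so you should fix some intermediate $\alpha' \in (\alpha,\alpha_0)$ (permissible since the affine transmission datum and $f^\pm\equiv 0$ lie in every H\"older class) and then choose $\rho$ so that $C_0\rho^{2+\alpha'} \le \tfrac12\rho^{2+\alpha}$; this does not alter the structure of the argument, and your derivation of $F(\mathcal{A}^{\pm},0)=0$ and of \eqref{S_3}--\eqref{S_4} by tangential differentiation is exactly what the paper intends.
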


 \begin{proof}
   The proof includes the same lines as \cite[Lemma 7.1]{MPA} along with minor changes. The main difference is to replace Theorem 4.16 in \cite{MPA}  with Theorem \ref{flat}.
      \end{proof}

\begin{proof}[Proof of Theorem \ref{BMOQ}] The proof is omitted because it follows similar lines as the proof of \cite[Theorem 1.3]{MPA}. Indeed, the key ingredients are the Theorem \ref{flat} and Lemma \ref{L_2}. Details are left to the interested reader. 
\end{proof}

\section{Applications and comments} \label{SecCasoGeral}

\subsection{Global estimates}

As a product of a translation argument, we obtain the following result: a global $C^{2,\alpha}$ estimate for Theorems \ref{BMO} and \ref{BMOQ}.
\begin{corollary}\label{Obs1}
\end{corollary}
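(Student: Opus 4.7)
The plan is to promote the pointwise $C^{2,\alpha}(0)$ estimates furnished by Theorem \ref{BMO} and Theorem \ref{BMOQ} to global $C^{2,\alpha}$ estimates on $\overline{\Omega^{\pm}_{1/2}}$ via a translation argument combined with a Campanato--Morrey characterization of Hölder spaces. The heuristic is that the pointwise theorems already give everything one needs at the origin, and both the transmission problem \eqref{E1} and the full list of hypotheses {\bf A1}--{\bf A4} transfer well under translations of the base point, with uniform constants controlled by the global norms of the data.

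First, I would fix an arbitrary base point $y_0 \in \overline{\Omega^{\pm}_{1/2}} \cup (\Gamma \cap B_{1/2})$ and consider the translated solution $\tilde u(x) := u(x+y_0)$. Because $F$ has constant coefficients, $\tilde u$ solves a transmission problem of the same form with interface $\tilde\Gamma$ given by the graph of $\tilde\psi(x') := \psi(x'+y_0') - (y_0)_n$, right-hand sides $\tilde f^{\pm}(x) := f^{\pm}(x+y_0)$ and transmission datum $\tilde g(x) := g(x+y_0)$. The reducibility procedure from Remark~(ii) after the assumptions then normalizes $\tilde F(0)=0$ and $\tilde f^{\pm}(0)=0$ by subtracting an appropriate multiple of $|x|^2/2$ from $\tilde u$ and translating $F$ accordingly. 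A direct inspection shows that the pointwise Hölder seminorms of the translated data at the new origin are dominated by the global Hölder norms on $B_1$ of the original data; in particular, the constants appearing in {\bf A3}--{\bf A4} are uniformly bounded as $y_0$ varies.

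Next, I would apply Theorem \ref{BMO} (respectively, Theorem \ref{BMOQ}) to the translated problem to produce, at each base point $y_0$, a quadratic polynomial $\mathfrak{P}^{\pm}_{y_0}$ with
$$\|u^{\pm}-\mathfrak{P}^{\pm}_{y_0}\|_{L^{\infty}(\Omega^{\pm}_r(y_0))}\le C\, r^{2+\alpha}\quad\text{for all }0<r\ll 1,$$
with the constant $C$ and the coefficient bounds for $\mathfrak{P}^{\pm}_{y_0}$ depending only on the universal data and on the global norms $\|\psi\|_{C^{2,\alpha}}$, $\|g\|_{C^{1,\alpha}}$, $[f^{\pm}]_{C^{\alpha}}$, but not on $y_0$. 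I would then invoke a standard Campanato--Morrey type lemma: if a continuous function admits at every base point of a domain a uniformly bounded quadratic polynomial approximating it with error $O(r^{2+\alpha})$, then it belongs to $C^{2,\alpha}$ of the closed domain, with $C^{2,\alpha}$ norm controlled by that uniform constant. The matching conditions \eqref{S3}--\eqref{S4} on the coefficients of the polynomials encode the jump of $\nabla u$ across $\Gamma$ correctly, so the conclusion separately yields $u^{\pm}\in C^{2,\alpha}(\overline{\Omega^{\pm}_{1/2}})$ with the expected estimate.

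The main obstacle will be the interface hypothesis {\bf A2}, namely $|g(0)|\,\|D^2_{x'}\psi(0')\|=0$, which at a generic base point $y_0\in\Gamma$ becomes $|g(y_0)|\,\|D^2_{x'}\psi(y_0')\|$ and need not vanish. I expect to handle this either by performing, at each $y_0$, an affine change of coordinates that flattens the tangent plane of $\Gamma$ and absorbs the quadratic part of $\psi(\cdot+y_0')$ into a cubic correction polynomial added to $\tilde u$, so that the translated problem satisfies {\bf A2} exactly at the new origin, or by revisiting the proof of Theorem \ref{BMO}/\ref{BMOQ} to verify that the vanishing in {\bf A2} may be relaxed to a quantitative smallness that contributes only a controlled error, already accounted for by the $\|\psi\|_{C^{2,\alpha}}$ factor appearing in the final estimate. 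A minor additional point is to glue the interface estimate with a purely interior Schauder estimate (coming from \cite{WN} under small aperture, or \cite{Al} under quasiconcavity) on balls disjoint from $\Gamma$, via a standard covering argument based on the distance to the interface; this is routine once the uniform-in-$y_0$ pointwise bound is in hand.
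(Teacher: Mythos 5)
Your proposal takes essentially the same translation-and-rescaling argument as the paper's one-line proof, which defines $w(x) := 4\,u\bigl(x_0 + \tfrac{1}{2}x\bigr)$ for each $x_0 \in \overline{\Omega^{\pm}_{1/2}}$ and applies Theorem~\ref{BMO} (resp.\ Theorem~\ref{BMOQ}) to $w$; your Campanato--Morrey and interior-covering steps merely make explicit what the paper leaves implicit in its remark that ``pointwise estimates imply global estimates.'' The one substantive thing you add is the observation that hypothesis \textbf{A2} — the vanishing of $|g(0)|\cdot\|D^2_{x'}\psi(0')\|$ — need not survive translation to a generic base point $y_0 \in \Gamma$; this is a real subtlety that the paper's proof does not address. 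Your two proposed remedies are both reasonable: an affine flattening of $\Gamma$ that absorbs the quadratic part of $\psi$ into a cubic correction, or a quantitative relaxation of the vanishing to smallness. The second is in fact closest to what the paper already does internally, since the smallness regime (Step~1 of the proof of Theorem~\ref{BMO}) only requires $\|D^2_{x'}\psi(0')\| \le \delta_0$, obtained by rescaling, rather than exact vanishing — though the iterative Lemma~\ref{L02} is sketched only under the exact dichotomy $D^2_{x'}\psi(0')=0$ or $g(0)=0$, so the gap you flag is not fully closed in the paper either.
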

\begin{enumerate}
\item \textbf{(First Case)} Fix any $\alpha \in (0,1)$. Assume that $g \in C^{1,\alpha}(\Gamma)$ and $f^{\pm} \in C^{0,\alpha}(\overline{\Omega}^{\pm})$ with $f^+ = f^{-}$ on $\Gamma$. 
 Then there exists $\delta>0$ such that if \eqref{Cor} is satisfied then any bounded viscosity solution $u$ to the problem \eqref{E1} satisfies $u \in C^{2,\alpha}(\overline{\Omega^{\pm}_{1/2}})$ with estimate
$$
\|u^{\pm}\|_{C^{2,\alpha}(\overline{\Omega^{\pm}_{1/2}})} \le C_0  \|\psi\|_{C^{2,\alpha}(\overline{B'_1})}\left( \|u\|_{L^{\infty}(B_1)} +  \|g\|_{C^{1,\alpha}(\Gamma)} + \|f^{\pm}\|_{C^{0,\alpha}(\overline{\Omega^{\pm}})}  \right).
$$
where $C_0>0$ depends only on $n,\Lambda, \lambda$ and $\alpha$.
\item \textbf{(Second Case)} Let $F \in \mathcal{E}(\lambda,\Lambda)$ be a quasiconvex or quasiconcave operator. Fix any $\alpha \in (0,\alpha_0)\footnote{$\alpha_0 \in (0,1)$ as in \eqref{Hom1}.}$. Assume that $g \in C^{1,\alpha}(\Gamma)$ and $f^{\pm} \in C^{0,\alpha}(\overline{\Omega}^{\pm})$ with $f^+ = f^{-}$ on $\Gamma$ . Then any bounded viscosity solution $u$ to the problem \eqref{E1} satisfies $u \in C^{2,\alpha}(\overline{\Omega^{\pm}_{1/2}})$ with the same estimate as in item $(1)$.
\end{enumerate}
\begin{proof}
   In fact, for each $x_0 \in \overline{\Omega^{\pm}_{1/2}}$,  define $w : B_1 \rightarrow \mathbb{R}$ by  $w(x) = 4 \cdot u\left( x_0 + \frac{1}{2}x\right)$ and $w^{\pm} \colon= w \Big |_{\Omega^{\pm}}$.  Then, $w \in C(B_1)$ is a viscosity solution to
   $$
   \left\{
	\begin{array}{rclcl}
		F(D^2 w) &=& \tilde{f}^{\pm} & \mbox{in} & \Omega^{\pm},\\
		w^{+}_{\nu} - w^{-}_{\nu}&=& \tilde{g} & \mbox{on} & \Gamma
	\end{array}
	\right.
   $$ 
   where $\tilde{f}^{\pm}(x) \colon= f \left( x_0 + \frac{1}{2} x\right)$ and $\tilde{g}(x) \colon= 2 g \left(x_0 + \frac{1}{2}x \right)$.  Therefore, applying Theorem \ref{BMO} (or Theorem \ref{BMOQ} ) to the function $w$, we obtain $u \in C^{2,\alpha}(x_0)$ . 
\end{proof}

\subsection{Estimates for more general equations by perturbation}
The results obtained so far provide us with $C^{2,\alpha}$ estimates under weak convexity assumptions when the variable $x$ is frozen, that is, when the operator $F$ does not depend on $x$.   It seems plausible that the techniques in this paper can be modified to yield $C^{2,\alpha}$ regularity results for transmission  problems of the type
\begin{equation}\label{CGS}
  \left\{
	\begin{array}{rclcl}
		F(D^2 u,x) &=& f^{\pm} & \mbox{in} & \Omega^{\pm},\\
		u^{+}_{\nu} - u^{-}_{\nu}&=& g & \mbox{on} & \Gamma.
	\end{array}
	\right.
\end{equation}

As it turns out, this is the case. The starting point relies on the function
$$
 \beta_F(x,x_0) \colon= \sup_{M \in \textrm{Sym}(n)} \frac{F(M,x)-F(M,x_0)}{1+\|M\|},
$$
which measures the oscillation of $F$ in $x$ near the $x_0$. 

\subsection{First case}
 In the first main result, we realize that everything follows the same way as long as we modify Lemma \ref{Lema1} and \ref{Lema2}, since Lemmas \ref{LSS2} and \ref{L02} are consequences of them. Notice that the proof of Lemma \ref{Lema1} is based on the ABP estimate, existence and uniqueness of viscosity solutions and compactness argument. The proof of Theorem 2.1 in \cite{MPA} can readily be modified to yield ABP-type estimates, existence and uniqueness results for operators of the form  $F=F(M,x)$.  Regarding the proof of Lemma \ref{Lema1}, if we include the additional hypothesis $\|\beta\|_{L^{n}(\Omega^{\pm})} \le \delta$ in \eqref{Estima}, we have that the previous techniques can be modified to yield to proof Lemma \ref{Lema1}, including the assumption $\|\beta_j\|_{L^n(\Omega^{\pm}_j)} \to 0$ as $j \to + \infty$ in Lemma \ref{Est}.  To conclude, Lemma \ref{Lema2} follows from Lemma \ref{Lema1}.
 
 \subsection{Second case}
For quasiconvex/quaseconcave operators, we do not need to modify Theorem \ref{flat}.  In Lemma \ref{L1}, it is possible to prove that  given $\epsilon>0$, there exists $\delta>0$ such  that if \eqref{Hip} holds and $\|\beta\|_{L^n(\Omega^{\pm})} \le \delta$, then any two viscosity solutions $u$ and $v$ of \eqref{CGS} and
$$
\left\{
	\begin{array}{rclcl}
		F(D^2 v,0) &=& 0 & \mbox{in} & B^{\pm}_{3/4},\\
		v^{+}_{x_n} - v^{-}_{x_n}&=& D' \cdot x + b & \mbox{on} & B_{3/4} \cap \{x_n=0\}
	\end{array}
	\right.
$$
respectively, satisfy
$$
\|u-v\|_{L^{\infty}(B_{3/4})} \le \epsilon.
$$
As in the previous case,  the proof of Lemmas \ref{L1} is based on the ABP estimates and $C^{\alpha_1}$ regularity of the solution.  It is worth noting that Lemma \ref{L_1} and Lemma \ref{L_2} are consequences of  Lemma \ref{L1}. 

Thus,  as a product of the ideas above, we can extend to more general operators of the type $F=F(M,x)$. In what follows we describe precisely such results.
 
 \begin{theorem}\label{Obs2}
 \end{theorem}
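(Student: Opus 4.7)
My proof plan for Theorem \ref{Obs2} is to transfer the two main results, Theorem \ref{BMO} and Theorem \ref{BMOQ}, to the variable-coefficient setting \eqref{CGS} by incorporating the oscillation quantity $\beta_F(\cdot,0)$ into the hypotheses. Specifically, I would impose, in addition to \textbf{A1}--\textbf{A4}, a Campanato-type smallness condition
$$
\left( \intav{B_r \cap \Omega^{\pm}} \beta_F(x,0)^n\, dx \right)^{1/n} \le \mathfrak{c}_\beta \cdot r^{\alpha}
$$
for all sufficiently small $r>0$. Under this assumption, together with either the Cordes-Landis ellipticity condition \eqref{Cor} (Case 1) or quasiconvexity/quasiconcavity of $M \mapsto F(M,0)$ (Case 2), the conclusion is pointwise $C^{2,\alpha}(0)$ regularity with the same quadratic polynomial approximation and coefficient bounds as in Theorems \ref{BMO} and \ref{BMOQ}.

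The architecture of the proof mirrors the constant-coefficient case, replacing the frozen operator by $F(\cdot,0)$ as the tangential profile. First I would upgrade the ABP estimate (Lemma \ref{ABP-fullversion}), the comparison principle (Theorem \ref{comparação}), and the existence/uniqueness theorem (Theorem \ref{Unicidade}) to operators $F(M,x)$ with $\beta_F \in L^n$; this is essentially the Caffarelli--Crandall--Kocan--Święch framework adapted to the transmission setting, and follows line by line from the proofs in \cite{MPA} as the authors already indicate. Second, I would replace Lemma \ref{Lema1} (resp. Lemma \ref{L1}) with a variant in which the approximating tangential equation is $F(D^2 \mathfrak{h}, 0) = 0$ and the additional smallness hypothesis $\|\beta_F(\cdot,0)\|_{L^n(\Omega^{\pm})} \le \delta$ is imposed alongside \eqref{Estima}. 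The proof is again by contradiction: from blow-up sequences $(u_j, F_j, \psi_j)$ with $\|\beta_{F_j}(\cdot,0)\|_{L^n} \to 0$ one extracts, via the global Hölder estimates from \cite{MPA}, a locally uniform limit $u_\infty$; the stability Lemma \ref{Est}, suitably enhanced to carry the $\beta_{F_j} \to 0$ condition, then forces $u_\infty$ to solve the frozen-coefficient transmission problem, contradicting the assumed failure.

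With the modified approximation lemmas in place, Lemma \ref{LSS2} and Lemma \ref{L_2} go through with only cosmetic changes, because at each dyadic step the rescaled oscillation of $F_\ell$ at the origin inherits a smallness factor $\rho^{\ell \alpha}$ entirely analogous to the $f^{\pm}$-computation in \eqref{Peq0}. Consequently the iterative scheme of Lemma \ref{L02} produces Cauchy sequences of coefficients $(\mathcal{A}_k^{\pm}, \mathcal{B}_k^{\pm}, \mathfrak{c}_k)$ whose limits furnish the quadratic polynomials $\mathfrak{P}^{\pm}_{\infty}$ with the sharp decay $\|u^{\pm} - \mathfrak{P}^{\pm}_{\infty}\|_{L^{\infty}(\Omega^{\pm}_r)} \le C r^{2+\alpha}$, exactly as in the proof of Theorem \ref{BMO}. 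For Case 2 one substitutes Proposition \ref{usar} applied to $F(\cdot, 0)$ and uses Theorem \ref{flat} as the flat-interface engine, noting that quasiconvexity/quasiconcavity is preserved by locally uniform limits.

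The main obstacle I anticipate is the compactness step inside the enhanced approximation lemma, specifically showing that viscosity solutions of $F_j(D^2 u_j, x) = f_j^{\pm}$ with $\|\beta_{F_j}(\cdot,0)\|_{L^n(\Omega^{\pm}_j)} \to 0$ converge, in the viscosity sense, to a solution of the frozen transmission problem. The classical passage to the limit under $L^n$-smallness of $\beta_F$ requires working within the $L^n$-viscosity solution framework, and adapting this framework to the jump condition on the interface $\Gamma$ — in particular verifying that no pathological behavior develops near $\Gamma$ when $F$ varies in $x$ — is the technical crux. Once this stability statement is established, the remainder of the iterative scheme is a direct adaptation of the constant-coefficient arguments in Sections 4 and 5.
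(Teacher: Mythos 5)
Your proposal follows the same perturbative architecture that the paper sketches in Section 6: add the Campanato-type smallness \eqref{OC} on $\beta_F(\cdot,0)$, upgrade the ABP/existence/uniqueness machinery to $F(M,x)$, insert $\|\beta_F\|_{L^n}\le\delta$ into the approximation lemmas and $\|\beta_{F_j}\|_{L^n}\to 0$ into the stability lemma, then run the same dyadic iteration. One small imprecision: for the first (small-ellipticity-aperture) case the tangential equation in the paper's Lemma \ref{Lema1} is $\Delta\mathfrak{h}=0$, not $F(D^2\mathfrak{h},0)=0$, since the contradiction sequence takes $\Lambda_j\to\lambda$ and the frozen operator degenerates to the Laplacian in the limit---this is precisely what yields $C^3$ control of $\mathfrak{h}$ and thus the full range $\alpha\in(0,1)$; only in the quasiconvex/quasiconcave case is the tangent $F(\cdot,0)$, with the range $\alpha\in(0,\alpha_0)$ dictated by Proposition \ref{usar} and Theorem \ref{flat}.
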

\begin{enumerate}
\item \textbf{(First Case)} Fixe any $\alpha \in (0,1)$. Assume that $g \in C^{1,\alpha}(\Gamma)$ and $f^{\pm} \in C^{0,\alpha}(\overline{\Omega}^{\pm})$ with $f^+ = f^{-}$ on $\Gamma$ and for any $x_0 \in \overline{\Omega^{\pm}_{1/2}}$,
 \begin{equation} \label{OC}
\left( \intav{B_r(x_0) \cap \Omega^{\pm}} \beta^n(x,x_0) dx\right)^{1/n} \le \mathfrak{c} \cdot r^{\alpha},
 \end{equation}
for $r$ small and $\mathfrak{c}>0$. Then there exists $\delta>0$ such that if \eqref{Cor} is satisfied then any bounded viscosity solution $u$ to the problem \eqref{CGS} satisfies $u \in C^{2,\alpha}(x_0)$ with estimate
$$
\|u^{\pm}\|_{C^{2,\alpha}(\overline{\Omega^{\pm}_{1/2}})} \le C_0  \|\psi\|_{C^{2,\alpha}(\overline{B'_1})}\left( \|u\|_{L^{\infty}(B_1)} +  \|g\|_{C^{1,\alpha}(\Gamma)} + \|f^{\pm}\|_{C^{0,\alpha}(\overline{\Omega^{\pm}})}  \right).
$$
where $C_0>0$ depends only on $n,\Lambda, \lambda$ and $\alpha$.
\item \textbf{(Second Case)} Let $F \in \mathcal{E}(\lambda,\Lambda)$ be a quasiconvex or quasiconcave operator. Fixe any $\alpha \in (0,\alpha_0)$. Assume that $g \in C^{1,\alpha}(\Gamma)$ and $f^{\pm} \in C^{0,\alpha}(\overline{\Omega}^{\pm})$ with $f^+ = f^{-}$ on $\Gamma$ and \eqref{OC}.  Then any bounded viscosity solution $u$ to the problem \eqref{CGS} satisfies $u \in C^{2,\alpha}(\overline{\Omega^{\pm}_{1/2}})$ with the same estimate as in item $(1)$.
\end{enumerate}

As immediate consequence of Theorem \ref{Obs2}-(1) we can recover the classical result of Cordes-Nirenberg.

\begin{corollary}[Cordes-Nirenberg] \label{Isaac}
Fix any $x_0 \in \overline{ \Omega^{\pm}_{1/2}}$. Let $u$ be a viscosity solution to 
$$
\left\{
	\begin{array}{rclcl}
		\displaystyle\sup_{\beta \in \mathcal{B}} \, \inf_{\gamma \in \mathcal{A}} \left( \mathcal{L}_{\gamma,\beta} \, u - h_{\beta \gamma}(x) \right)&=& f^{\pm} & \mbox{in} & \Omega^{\pm},\\
		u^{+}_{\nu} - u^{-}_{\nu}&=& g & \mbox{on} & \Gamma,
	\end{array}
	\right.
	$$
where $\mathcal{L}_{\gamma,\beta} \, u \colon= a^{ij}_{\gamma \beta}(x) \partial_{ij} u$ is a family of uniformly elliptic operators with,  $h_{\gamma \beta} \in C^{0,\alpha}(x_0)$ and $(a^{ij}_{\gamma, \beta} - \delta_{ij}) \in C^{0,\alpha}(x_0)$.  There exists $\delta >0$ such that if $a^{ij}_{\gamma \beta}(x)$ fulfills the condition \eqref{Cor}, then $u \in C^{2,\alpha}(x_0)$.
\end{corollary}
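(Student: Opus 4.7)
The plan is to realize Corollary \ref{Isaac} as a direct application of Theorem \ref{Obs2}-(1) (the variable-coefficient version) once the Isaacs operator is packaged appropriately. First I would define $F : \textrm{Sym}(n) \times \Omega^{\pm} \to \mathbb{R}$ by
\[
F(M,x) := \sup_{\beta \in \mathcal{B}} \inf_{\gamma \in \mathcal{A}} \bigl( a^{ij}_{\gamma\beta}(x) M_{ij} - h_{\gamma\beta}(x) \bigr),
\]
and check that $F \in \mathcal{E}(\lambda,\Lambda)$. This is standard: each linear map $M \mapsto a^{ij}_{\gamma\beta}(x) M_{ij}$ is uniformly elliptic with the stated constants, and both $\sup$ and $\inf$ preserve uniform ellipticity. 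Consequently the Cordes-type smallness \eqref{Cor} imposed on the matrix family $\{a^{ij}_{\gamma\beta}\}$ transfers verbatim to $F$ with the same $\delta$.

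Next I would estimate the oscillation $\beta_F$ at the base point $x_0$. Starting from the elementary stability inequality for sup-inf expressions,
\[
|F(M,x) - F(M,x_0)| \le \sup_{\gamma,\beta} \bigl( \| a^{ij}_{\gamma\beta}(x) - a^{ij}_{\gamma\beta}(x_0) \| \, \|M\| + |h_{\gamma\beta}(x) - h_{\gamma\beta}(x_0)| \bigr),
\]
and combining it with the pointwise $C^{0,\alpha}(x_0)$ regularity of $(a^{ij}_{\gamma\beta} - \delta_{ij})$ and $h_{\gamma\beta}$, uniformly in $(\gamma,\beta)$, the right-hand side is dominated by $C(1+\|M\|)|x-x_0|^{\alpha}$. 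Dividing by $1+\|M\|$ and taking $\sup_M$ yields $\beta_F(x,x_0) \le C|x-x_0|^{\alpha}$; integrating over $B_r(x_0) \cap \Omega^{\pm}$ then delivers the averaged H\"older decay required in \eqref{OC}.

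With these two checks in place -- small ellipticity aperture, and $C^{\alpha}$ control of $\beta_F$ -- together with the H\"older regularity of $f^{\pm}$ and $g$ inherited from the standing hypotheses \textbf{A3}--\textbf{A4}, all hypotheses of Theorem \ref{Obs2}-(1) are satisfied. A direct application at $x_0$ then yields $u \in C^{2,\alpha}(x_0)$ with the advertised estimate. I do not expect any serious obstacle in this argument: the main content is conceptually subsumed in Theorem \ref{Obs2}, and the only step that genuinely uses the Isaacs structure is the sup-inf stability inequality above. The novelty of the corollary lies not in the proof itself, but in the observation that the non-convex, non-concave nature of an Isaacs operator prevented \cite{MPA} from reaching $C^{2,\alpha}$, while the small-ellipticity-aperture framework developed here does.
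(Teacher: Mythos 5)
Your proposal is correct and matches the paper's intent: the paper asserts the corollary as an ``immediate consequence of Theorem~\ref{Obs2}-(1)'' without further elaboration, and your argument is precisely the verification being left implicit. You correctly package the Isaacs operator into $F(M,x)$, observe that $\sup$-$\inf$ of linear elliptic operators with fixed constants $(\lambda,\Lambda)$ preserves uniform ellipticity (so the Cordes condition \eqref{Cor} carries over unchanged), and derive the pointwise bound $\beta_F(x,x_0)\le C|x-x_0|^\alpha$ from the sup-inf stability inequality and the uniform $C^{0,\alpha}(x_0)$ control on $a^{ij}_{\gamma\beta}$ and $h_{\gamma\beta}$, which in turn gives the averaged decay \eqref{OC}. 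The only caveat worth making explicit is that the $C^{0,\alpha}(x_0)$ hypotheses on $a^{ij}_{\gamma\beta}$ and $h_{\gamma\beta}$ must be understood as uniform over $(\gamma,\beta)\in\mathcal{A}\times\mathcal{B}$ (otherwise the $\sup_{\gamma,\beta}$ on the right of your stability inequality could be infinite); this uniformity is the standard reading of the corollary and is needed for the argument to close.
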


\subsection*{Acknowledgments}

\hspace{0.65cm} G.C. Ricarte have been partially supported by CNPq-Brazil under Grants  No. 304239/2021-6.

\end{document}